\documentclass[a4paper,1pt,leqno]{amsart}

\usepackage{amsmath,amssymb,amsthm}
\usepackage{draftcopy}
\usepackage{hyperref}
\usepackage{graphics}
\usepackage{graphicx}
\usepackage{enumerate}

\usepackage{color}

\newtheorem{theorem}{Theorem}[section]
\newtheorem{proposition}[theorem]{Proposition}
\newtheorem{remark}[theorem]{Remark}
\newtheorem{lemma}[theorem]{Lemma}
\newtheorem{corollary}[theorem]{Corollary}

%--------- shortcuts liviu----------------

%\newcommand{\rr}{\mathbb{R}}

\def\la{\lambda}

%---shortcuts Cristi-----
\def\nn{\mathbb{N}}
\def\rr{\mathbb{R}}
\def\to{\rightarrow}
\def\be{\begin{equation}}
\def\ee{\end{equation}}
\def\bes{\begin{equation*}}
\def\ees{\end{equation*}}
\def\q{\quad}
\def\intdoi{\int_{0}^{T}\!\!\!\int_{0}^{L}}
\def\l{\left}
\def\ov{\overline}
\def\r{\right}
\def\intt{\int_{0}^{T}}
\def\intx{\int_{0}^{L}}
\def\bdt{\Big|_{t=0}^{t=T}}
\def\bdx{\Big|_{x=0}^{x=L}}
\newcommand{\dx}{\,\mathrm{d}x}
\newcommand{\dt}{\,\mathrm{d}t}
%\linenumbers

\begin{document}

\title[Control for  Kuramoto-Sivashinsky equation on star shaped trees]{Null-controllability of the linear Kuramoto-Sivashinsky equation on star-shaped trees}

\author[C. M.  Cazacu]{Cristian M. Cazacu}
\address[C. M. Cazacu]{Faculty of Mathematics and Computer Science \& ICUB,  University of Bucharest\\
14 Academiei Street \\ 010014 Bucharest\\ Romania\\
\hfill\break\indent \and
\hfill\break\indent
Institute of Mathematics ``Simion Stoilow'' of the Romanian Academy. Research Group of the Project PN-III-P4-ID-PCE-2016-0035 \\21 Calea Grivitei Street \\010702 Bucharest \\ Romania\\
}
\email{cristian.cazacu@fmi.unibuc.ro}

\author[L. I. Ignat]{Liviu I. Ignat}
\address[L. I. Ignat]{Institute of Mathematics ``Simion Stoilow'' of the Romanian Academy,
Centre Francophone en Math\'{e}matique
\\21 Calea Grivitei Street \\010702 Bucharest \\ Romania
}

\email{liviu.ignat@gmail.com}
\thanks{ C. C. was partially supported by a Young Researchers Grant awarded by The Research Institute of the University of Bucharest (ICUB) and by the CNCS-UEFISCDI Grant No. PN-III-P4-ID-PCE-2016-0035. L. I. was  partially supported by CNCS-UEFISCDI Grant No.  PN-II-RU-TE-2014-4-0007.
 A. P. was partially supported by CNPq (Brasil) and Agence Universitaire de la Francophonie.}

\author{Ademir F. Pazoto}
\address[A. F. Pazoto]{Instituto de Matem\'atica, Universidade Federal do Rio de
Janeiro, P.O. Box 68530, CEP 21945-970, Rio de Janeiro, RJ,  Brasil}
\email{ademir@im.ufrj.br}

\begin{abstract}
In this paper we treat null-controllability properties for  the linear Kuramoto-Sivashinsky equation on a network with two types of boundary conditions. More precisely,
the equation is considered on a star-shaped tree with Dirichlet and Neumann boundary conditions. By using the moment theory we can derive
null-controllability properties with boundary controls acting on the external vertices of the tree.  In particular, the controllability holds
if the \textit{anti-diffusion} parameter of the  equation does not belong to a critical countable set of real numbers. We point out that the critical set for which the null-controllability fails differs from the first model to the second one.
\end{abstract}

\maketitle

{\textit{Keywords}:} Kuramoto-Sivashinsky equation, null-controllability, star-shaped trees, method of moments.

{\textit{Mathematics Subject Classification 2010}:}  35K25, 35R02, 93B05, 93B60.

\section{Introduction}

In this paper, we consider two control problems on the same simple network formed by the edges of a tree.
The problem we address here enter in the framework of quantum graphs. The name quantum
graph is used for a graph considered as a one-dimensional singular variety and equipped with
a differential operator. Those quantum graphs are metric spaces which can be written as the
union of finitely many intervals, which are compact or $[0,\infty)$ and any two of these intervals
are either disjoint or intersect only at one of their endpoints.

%Quantum graphs arise as simplified models in mathematics, physics, chemistry and
%engineering (e.g., nanotechnology and microelectronics), when one considers propagation of
%waves through a quasi-one-dimensional system that looks like a thin neighborhood of a graph.
%We can mention in particular the quantum wires and thin waveguides. Differential operators
%on metric graphs arise in a variety of applications, to quote a few: carbon nano-structures \cite{MR2336365},
%photonic crystals \cite{MR1856878}, high-temperature granular superconductors \cite{MR690539}, quantum waveguides
%\cite{Carini}, free-electron theory of conjugated molecules in chemistry, quantum chaos, spectral analysis for beams on networks  \cite{MR1785593, MR1769018}, etc. For more
%details, we refer the reader to the review papers \cite{MR1937279,MR2459876,MR2042548,MR2883421}  and the references therein for
%more information on this topic.

Our main goal is to study  boundary null-controllability properties for the Kuramoto-Sivashinsky (KS) equation
\be\label{KS}
y_t + \la y_{xx}+y_{xxxx}=0,
\ee
on a star-shaped tree denoted $\Gamma$. More precisely, $\Gamma$ is a simplified  topological graph with $N\geq 2$ edges $e_i$, $i\in \{1, \ldots, N\}$, of the same given length $L>0$ and $N+1$ vertices. Besides, all edges intersect at a unique endpoint which is the interior vertex of the graph.  The mathematical formulation of the control problems that we address on  $\Gamma$ stands for a system of $N$-KS equations on the interval $(0, L)$  coupled through the left endpoint $x=0$ as follows
\be\label{control}
\left\{\begin{array}{ll}
y^k_t+ \la y^k_{xx}+y^k_{xxxx}=0, & (t, x)\in (0, T)\times (0,L)\\[10pt]
y^i(t, 0)=y^j(t, 0), & t\in (0, T), \quad  i,j \in \{1, \ldots, N\}\\[10pt]
%y^k(t, L)=0,  & k\in \{1, \ldots, N\},\\[10pt]
\sum_{k=1}^{N}y^k_x(t, 0)=0, & t\in (0, T)\\[10pt]
y^i_{xx}(t, 0)=y^j_{xx}(t, 0), & t\in (0, T), \quad  i, j \in \{1, \ldots, N\}\\[10pt]
\sum_{k=1}^{N}y^k_{xxx}(t, 0)=0, & t\in (0, T)\\[10pt]
y^k(0, x)=y^k_0(x), & x\in (0, L).\\[10pt]
\end{array}\right.
\ee
For system \eqref{control} we study two types of boundary control conditions:
\begin{equation*}\label{model1}
(\textrm{I}): \quad \left\{\begin{array}{cc}
y^k(t, L)=0, & \\[10pt]
 y_x^k(t, L)=u^k(t), & \quad k\in \{1, \ldots, N\},
\end{array}\right.
\end{equation*}
respectively
\begin{equation*}\label{model2}
(\textrm{II}): \quad \left\{\begin{array}{cc}
 y_x^k(t, L)=a^k(t),  &   \\[10pt]
  y_{xxx}^k(t, L)=b^k(t), & \quad k\in \{1, \ldots, N\}.
\end{array}\right.
\end{equation*}
Next in the paper we will refer to \eqref{control}-(I) for system \eqref{control} subject to the boundary conditions (I) and to \eqref{control}-(II) for system \eqref{control} with boundary conditions (II).

In system \eqref{control},  $\lambda$ is a positive constant, the functions $y^k = y^k(t, x)$ are real-valued for any $k\in \{1, \ldots, N\}$,  $t$ denotes the time variable, $x$ denotes the space variable and the subscripts for both $t$ and $x$ indicate partial differentiation with respect to each one. The boundary functions $u^k$,  $a^k$ and $b^k$ are considered as control inputs acting on the external nodes. In model \eqref{control}-(II) we impose two controls to act on the same vertex whereas in model  \eqref{control}-(I)  we only require one control. Our main aims  are to see whether we can force the solutions of system \eqref{control} to have certain properties by choosing appropriate
control inputs. The focus here is on the following null-controllability issue:

\vglue 0.1 cm

{\it Given any finite time $T>0$ and any initial state $y_0=(y^k_0)_{k=1, N}$, can we find proper  control inputs in (I) or (II)  ($u=(u^k)_{k=1, N}$ and $a=(a^k)_{k=1, N}$, $b=(b^k)_{k=1, N}$, respectively) to lead the solution of system \eqref{control} to the zero state, i.e.,
\be\label{zerostate}
y^k(T, x)=0, \q \textrm{ for all } x\in (0, L)\q  \text{and} \q k\in \{1,\ldots,  N\} ?
\ee}
For parabolic control problems, in
general, it is not possible to steer the system to an arbitrary prescribed state. Thus, we do not expect
the exact controllability to be true neither for the KS control system. Our motivation for studying such control systems goes back to the quasilinear KS equation
\be\label{nonlinear ks}
y_t + \la y_{xx}+y_{xxxx}+ yy_x=0,
\ee
which was
derived independently by Kuramoto and Tsuzuki in \cite{kt1,kt2} as a model for phase turbulence in reaction-diffusion systems and by Sivashinsky in \cite{MR0671098,MR0502829} as a model for plane flame propagation.  The real positive number $\lambda$ in \eqref{nonlinear ks} is called the anti-diffusion parameter. This
nonlinear partial differential equation also describes incipient instabilities in a variety of physical and
chemical systems (see, for instance, \cite{chen-chang} and \cite{MR1396472}).

The linear control problem on the interval $(0, 1)$ has been first studied in \cite{MR2556747} considering Dirichlet boundary conditions.
By using the moment theory developed by Fattorini and Russell in \cite{MR0335014}, it was proved that this system is null controllable if two controls
act at one endpoint of the interval. If one control is removed the system is controllable
 if and only if the anti-diffusion parameter $\lambda$ does not belong to the following countable
 set of critical values:
  \be\label{set2}
\mathcal{N}_0:=\left\{\pi^2(m^2 + n^2) : (m,n) \in \mathbb{N}^2, 1\leq m < n,\,m\mbox{ and } n \mbox{ have the same parity} \right\}.
\ee
More precisely, there exists a finite-dimensional space of initial conditions
 that cannot be driven to zero with only one control. We point out that the results in \cite{MR2556747} could be extended to intervals of any length $L>0$ by re-scaling the set $\mathcal{N}_0$ in terms of $L$ accordingly.

  Later on, the boundary controllability to the trajectories
of \eqref{nonlinear ks} was proved in \cite{MR2763563} when two controls act on the left Dirichlet boundary conditions.
We also refer to \cite{cerpa-carreno,cerpa-guzman,
MR3058920,MR3356978,takeo, Mazanti, gugat1, gugat2} for related problems and results
on the subject. Particularly, the strategy used in \cite{takeo} can be regarded as
a possibility to study the nonlinear problem for the systems under consideration in this paper.

To our knowledge, the study of the controllability properties of KS systems in the context of quantum graphs has not been yet addressed in the literature neither for the linear equation \eqref{KS}.
At this respect, the program of this work was carried out for a choice of classical boundary conditions and aims to
establish as a fact that the models under consideration  inherit the interesting controllability properties initially observed
for the KS equation posed on a bounded interval.

In order to present our main results, we introduce the following countable sets
 \bes\label{set}
\mathcal{N}_1:=\left\{\frac{\pi^2}{4L^2}((2m)^2 + (2n)^2): (m,n) \in \mathbb{N}^2, \ 1\leq m < n\,  \right\},
\ees
 \bes\label{set4}
\mathcal{N}_2:=\left\{\frac{\pi^2}{4L^2} (2m)^2:  m\in \nn,\  1\leq m  \right\},
\ees
\bes\label{seet}
\mathcal{N}_3:=\left\{\frac{\pi^2}{4L^2}(2n+1)^2  : n \in \nn,\  n \geq 0 \right\},
\ees
\bes\label{set3}
\mathcal{N}_{4}:=\left\{\frac{\pi^2}{4L^2}\left((2m)^2 + (2n+{1})^2\right) : (m,n) \in \mathbb{N}^2, \ 1\leq m, 0\leq  n \,  \right\},
\ees
 \bes\label{set5}
\mathcal{N}_{odd}:=\left\{\frac{\pi^2}{4L^2}\left((2m+1)^2 + (2n+1)^2\right) : (m, n)\in \nn^2,  \ 0\leq  m< n\right\}.
\ees
To simplify the presentation of our main results let us also introduce the notations
%$$\mathcal{N}_{even}:=\mathcal{N}_1\cup \mathcal{N}_2,$$
$$\mathcal{N}_{mixt}:=\mathcal{N}_3\cup \mathcal{N}_4.$$
Observe that
\be\label{rel}
\mathcal{N}_{1}\cup \mathcal{N}_{odd} =\frac{\mathcal{N}_0}{4L^2}.
\ee
A priori, for the models we study here each control possesses $N$ components but some of the inputs might not be necessary and could vanish completely. In fact, in the case when the system is null-controllable the goal is to intend the controls to act on a minimal number of components. In both cases the control properties are obtained under some  restrictions on the parameter $\lambda$. 
  Roughly speaking, for problem \eqref{control}-(I) we prove that  when $\lambda\notin \mathcal{N}_0/4L^2$ the null controllability property  holds with $N-1$
control inputs which is the optimal number of controls in this case. When $\lambda\in \mathcal{N}_0/4L^2$ the system is not controllable even if we act with $N$-controls.
For  model \eqref{control}-(II) when $\lambda\notin \mathcal{N}_{mixt}$ the controllability holds with $2N-2$ controls, number which is optimal if $\lambda\in \mathcal{N}_{odd}$.  
The number of controls can be reduced to $2N-3$ if $\lambda\notin \mathcal{N}_{mixt}\cup\mathcal{N}_{odd} $. 

For our purposes we need to work in a rigorous functional framework in which Sobolev spaces play a crucial role, namely $L^2(\Gamma)$ and $H^m(\Gamma)$ for $m\in \mathbb{N}^\star$. By $L^2(\Gamma)$ and $H^m(\Gamma)$ we understand the Hilbert spaces
$$L^2(\Gamma):=\prod_{i=1}^{N} L^2(0, L), \quad H^m(\Gamma):=\prod_{i=1}^{N} H^m(0, L), \quad m\geq 1,$$
endowed with their natural norms:
\[
\|\phi\|_{H^m(\Gamma)}^2=\sum _{k=1}^N \|\phi^k\|_{H^m(0,L)}^2,\quad
\phi=(\phi^1,\dots,\phi^N),\ m\geq 0.
\]

Our main results will be stated accurately in the following.

\begin{theorem}[Null-controllability for model \eqref{control}-(I)]\label{main theorem 1}
Let $T>0$ be fixed and $\lambda \not \in \mathcal{N}_{1}\cup \mathcal{N}_{odd}$. Then
 \begin{enumerate}%[i).]
 \item For any given edge $e_i$ with $i\in \{1, \ldots, N\}$ and any initial state $y_0=(y_0^k)_{k=1, N} \in L^2(\Gamma)$,    there exist  controls $u=(u^k)_{k=1, N}\in (H^1(0, T))^N$  with $u^{i}(t)= 0$ for all $t\in (0,T)$, such that the solution of system \eqref{control}-(I) satisfies
\begin{equation}\label{nullcontrol1}
y^k(T, x)=0, \textrm{  for all } x\in (0, L) \ \text{and} \ k\in \{1, \ldots, N\}.
\end{equation}
\item  For any given  two edges $e_{i_0}$ and $e_{j_0}$, with $i_0, j_0\in \{1, \ldots, N\}$, there exist an initial state $y_0=(y_0^k)_{k=1, N} \in L^2(\Gamma)$  such that for any control $u=(u^k)_{k=1, N}\in (H^1(0, T))^N$ with $u^{i_0}(t)=u^{j_0}(t)=0$ for all $t\in (0, T)$,  the solution of system \eqref{control}-(I) satisfies
    $$y^{k_0}(T, \cdot)\not  \equiv 0,$$
    for some $k_0\in \{1, \ldots, N\}$.
\end{enumerate}
\end{theorem}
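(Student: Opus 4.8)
The plan is to use the classical moment method of Fattorini and Russell \cite{MR0335014}, as in the interval case \cite{MR2556747}, after a spectral analysis of the spatial operator attached to \eqref{control}-(I). \textbf{Setting and reduction.} Introduce $A_1\phi=\lambda\phi_{xx}+\phi_{xxxx}$ on $L^2(\Gamma)$, with domain the $\phi\in H^4(\Gamma)$ satisfying the interior coupling conditions of \eqref{control} together with the homogeneous conditions $\phi^k(L)=\phi^k_x(L)=0$. Green's formula, using precisely those coupling conditions, shows that $A_1$ is self-adjoint and bounded below, so it has an orthonormal basis of eigenfunctions $\{\Phi_n\}\subset L^2(\Gamma)$ with real eigenvalues $\mu_n\to+\infty$. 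Expanding the solution of \eqref{control}-(I) in this basis and pairing with the (backward = forward, by self-adjointness) adjoint evolution, the requirement $y(T,\cdot)\equiv 0$ together with $u^i\equiv 0$ becomes the moment problem: find $u^k\in H^1(0,T)$, $k\neq i$, with
\[
\sum_{k\neq i}\int_0^T u^k(t)\,\Phi^k_{n,xx}(L)\,e^{-\mu_n(T-t)}\,dt=-\langle y_0,\Phi_n\rangle\,e^{-\mu_n T}\qquad\text{for every }n,
\]
the trace $\Phi^k_{n,xx}(L)$ being the quantity dual to the input $u^k=y^k_x(t,L)$.

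\textbf{Spectral decomposition.} By the symmetry of the star, the eigenfunctions split into two families. The \emph{symmetric} ones have all components equal, $\Phi_n=(\psi_n,\dots,\psi_n)$, where $\psi_n$ solves $\psi_{xxxx}+\lambda\psi_{xx}=\mu\psi$ on $(0,L)$ with $\psi_x(0)=\psi_{xxx}(0)=0$ and $\psi(L)=\psi_x(L)=0$; these eigenvalues $\mu_n^{s}$ are simple. The \emph{antisymmetric} ones are $\Phi_n=\chi_n\otimes c$ with $\sum_k c_k=0$, where $\chi_n$ solves the same equation with $\chi_n(0)=\chi_{n,xx}(0)=0$ and $\chi_n(L)=\chi_{n,x}(L)=0$; these eigenvalues $\mu_n^{a}$ have multiplicity $N-1$. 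Even/odd extension to $(-L,L)$ identifies the two families with the even, resp. odd, eigenfunctions of a clamped fourth-order problem, whence $\mu_n^{s}\neq\mu_m^{a}$ for all $n,m$, and a short case analysis handles the finitely many degenerate values of $\mu$. Solving $\phi_{xxxx}+\lambda\phi_{xx}=\mu\phi$ through its characteristic roots and imposing the boundary conditions, the eigenvalues are characterized by transcendental equations; and — the crucial point — the boundary traces $\psi_{n,xx}(L)$ and $\chi_{n,xx}(L)$ vanish \emph{exactly} when $\lambda$ lies in one of two critical sets, which turn out to be $\mathcal{N}_1$ for one family and $\mathcal{N}_{odd}$ for the other, so that the bad set is $\mathcal{N}_1\cup\mathcal{N}_{odd}=\mathcal{N}_0/(4L^2)$, cf. \eqref{rel}; for $\lambda\notin\mathcal{N}_1\cup\mathcal{N}_{odd}$ these traces obey a uniform lower bound $|\psi_{n,xx}(L)|,|\chi_{n,xx}(L)|\geq c\,\mu_n^{-p}$.

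\textbf{Proof of part (1).} With the lower bounds available the moment problem is solvable with the $N-1$ inputs $u^k$, $k\neq i$. For a symmetric mode the equation only constrains the scalar $\sum_{k\neq i}\int_0^T u^k e^{-\mu_n^{s}(T-t)}dt$, which can be prescribed; for the $(N-1)$-dimensional antisymmetric eigenspace at $\mu_n^{a}$ the equations constrain the vector $\big(\int_0^T u^k e^{-\mu_n^{a}(T-t)}dt\big)_k$ only through its projection onto $\mathbf{1}^\perp$, and the images of the vectors $e_k$, $k\neq i$, under this projection span $\mathbf{1}^\perp$ — so the $N-1$ inputs provide exactly the needed freedom, the $i$-th one being genuinely superfluous. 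One then builds $u^k$ as a series in a family biorthogonal to $\{t^j e^{-\mu_n t}\}$, which exists with suitable bounds since $\mu_n$ grows like $n^4$ (hence $\sum 1/\mu_n<\infty$) and the multiplicities are uniformly bounded, and checks convergence in $H^1(0,T)$: the decay of $e^{-\mu_n T}$ together with $|\langle y_0,\Phi_n\rangle|\leq\|y_0\|_{L^2(\Gamma)}$ beats the growth of the biorthogonal functions and the polynomial lower bound on the traces. By construction every Fourier coefficient of $y(T,\cdot)$ vanishes, which is \eqref{nullcontrol1}.

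\textbf{Proof of part (2), and the main obstacle.} For the negative statement, pick any eigenvalue $\mu^{a}$ of the scalar problem $\chi(0)=\chi_{xx}(0)=\chi(L)=\chi_x(L)=0$ (there are infinitely many) and set $y_0=\Phi$ with $\Phi^{i_0}=\chi$, $\Phi^{j_0}=-\chi$, $\Phi^k=0$ otherwise; this $\Phi$ is an eigenfunction of $A_1$ supported on $e_{i_0}\cup e_{j_0}$. The corresponding adjoint state is $e^{-\mu^{a}(T-t)}\Phi$, still supported on $e_{i_0}\cup e_{j_0}$, so the inputs $u^k$ with $k\neq i_0,j_0$ contribute nothing to the $\Phi$-component of the solution, which is therefore forced to equal $e^{-\mu^{a}T}\|\Phi\|_{L^2(\Gamma)}^2\neq 0$ once $u^{i_0}=u^{j_0}=0$; hence $y^{k_0}(T,\cdot)\not\equiv 0$ for some $k_0\in\{i_0,j_0\}$. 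I expect the only genuine difficulty to be the spectral step: deriving the transcendental eigenvalue equations, proving that the boundary traces at $x=L$ vanish \emph{precisely} on $\mathcal{N}_1\cup\mathcal{N}_{odd}$, and establishing the uniform lower bound away from this set — the reduction to moments, the construction of the biorthogonal family, the convergence of the control series, and the localized-eigenfunction argument are all standard.
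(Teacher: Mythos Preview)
Your proposal is correct and follows essentially the same route as the paper: the symmetric/antisymmetric splitting of the eigenfunctions, the even/odd extension to $(-L,L)$ to show the two scalar problems share no eigenvalue, the identification of the critical set with $\mathcal{N}_0/(4L^2)$ through the trace $\phi_{xx}(L)$, the reduction to $N-1$ decoupled scalar moment problems solved \`a la Fattorini--Russell, and the localized antisymmetric eigenfunction $(\chi,-\chi,0,\dots,0)$ for the optimality statement. Two minor corrections: the traces actually \emph{grow}, $|\psi_{n,xx}(L)|,|\chi_{n,xx}(L)|\sim n^2\sim\mu_n^{1/2}$ (this is Lemma~\ref{asy_eigenfunct} in the paper), not merely $\geq c\mu_n^{-p}$; and since the spatial decoupling already handles the geometric multiplicity, each $u^k$ solves a scalar moment problem against the \emph{distinct} exponentials $\{e^{-\mu_n t}\}$, so no generalized biorthogonal family involving powers $t^j$ is needed.
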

\begin{theorem}[Null-controllability for model \eqref{control}-(II)]\label{main theorem 2}
Let $T>0$ be fixed and $\lambda \not \in  \mathcal{N}_{mixt}$. Then
 \begin{enumerate}
\item  For any given edge $e_i$ with $i\in \{1, \ldots, N\}$ and any initial state $y_0=(y_0^k)_{k=1, N}\in L^2(\Gamma)$,   there exist  controls $a=(a^k)_{k=1, N}, b=(b^k)_{k=1, N} \in (H^1(0, T))^N$ with $a^{i}(t)= b^{i}(t)= 0$ for all $t\in (0, T)$, such that the solution of system \eqref{control}-(II) satisfies
\begin{equation}\label{nullcontrol2}
y^k(T, x)=0, \textrm{  for all } x\in (0, L)\ \text{and}\ k\in \{1, \ldots, N\}.
\end{equation}
\item  If  $\lambda\in \mathcal{N}_{odd}$ then for any given two edges $e_{i_0}$ and $e_{j_0}$, with $i_0, j_0\in \{1, \ldots, N\}$, there exist an initial state $y_0=(y_0^k)_{k=1, N} \in L^2(\Gamma)$ such that for any controls $a=(a^k)_{k=1, N}, b=(b^k)_{k=1, N}\in (H^1(0, T))^N$ with either $a^{i_0}(t)=a^{j_0}(t)=b^{i_0}(t)= 0$ or $b^{i_0}(t)=b^{j_0}(t)=a^{i_0}(t)= 0$ for all $t\in (0, T)$,  the solution of system \eqref{control}-(II) satisfies
    $$y^{k_0}(T, \cdot)\not  \equiv 0,$$
    for some $k_0\in \{1, \ldots, N\}$.
    \item  If $\lambda\notin \mathcal{N}_{odd}$ and $N\geq 3$ for any given two edges $e_{i_0}$ and $e_{j_0}$, with $i_0, j_0\in \{1, \ldots, N\}$ and any initial state $y_0=(y_0^k)_{k=1, N}\in L^2(\Gamma)$, there exist controls $a=(a^k)_{k=1, N}, b=(b^k)_{k=1, N}\in (H^1(0, T))^N$ with either $a^{i_0}(t)=a^{j_0}(t)=b^{i_0}(t)= 0$ or $b^{i_0}(t)=b^{j_0}(t)=a^{i_0}(t)= 0$  for all $t\in (0,T)$,  such that the solution of system \eqref{control}-(II) satisfies
    $$y^k(T, x)=0, \textrm{  for any } x\in (0, L), \quad k\in \{1, \ldots, N\}.
$$

\end{enumerate}
\end{theorem}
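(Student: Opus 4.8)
The plan is to run the moment method in exactly the same spirit as for the other items of Theorem~\ref{main theorem 2}, so that the statement collapses to a finite-dimensional injectivity check on the eigenspaces of the operator $\mathcal{A}$ governing \eqref{control}-(II). Recall the spectral picture: the coupling conditions at $x=0$ together with the homogeneous form of (II) at $x=L$ make $\mathcal{A}$ self-adjoint (every boundary term in the integration by parts cancels), so its spectrum is real and semisimple, and the eigenfunctions split into a \emph{symmetric} family — for each $k\ge 0$ a simple eigenvalue $\mu_k=(k\pi/L)^4-\lambda(k\pi/L)^2$ with eigenfunction $\Phi_k$ all of whose $N$ components equal $\cos(k\pi x/L)$ — and an \emph{antisymmetric} family — for each $n\ge 0$ an eigenvalue $\nu_n=q_n^4-\lambda q_n^2$, $q_n=(2n+1)\pi/(2L)$, of multiplicity $N-1$ with eigenspace $E_n=\{(c_j\sin(q_n x))_{j=1}^N:\sum_j c_j=0\}$. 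Integrating by parts in $t$ and $x$ shows that, for an adjoint eigenfunction $\phi$, an active control $a^k$ is paired with the trace $\phi^k_{xx}(L)+\lambda\phi^k(L)$ and an active control $b^k$ with $\phi^k(L)$. Hence, by the moment/duality argument already set up in the paper, it suffices to prove that for every eigenvalue $\mu$ of $\mathcal{A}$ the map $\ker(\mathcal{A}-\mu)\ni\phi\mapsto(\text{traces dual to the active controls})$ is injective. We may assume $i_0\ne j_0$ (if $i_0=j_0$ only two controls are switched off and the conclusion is precisely the first item of Theorem~\ref{main theorem 2}); so edge $e_{i_0}$ carries no control, $e_{j_0}$ carries exactly one, and each of the remaining $N-2\ge 1$ edges carries both controls.

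\emph{Antisymmetric eigenvalues.} Since $\lambda\notin\mathcal{N}_{odd}$ no two $\nu_n$ coincide, and since $\lambda\notin\mathcal{N}_{mixt}$ no $\nu_n$ equals some $\mu_k$; thus $\ker(\mathcal{A}-\nu_n)=E_n$. Take $\phi=(c_j\sin(q_n x))_j\in E_n$ with all its active traces equal to zero. On any edge $k$ one has $\phi^k(L)=(-1)^n c_k$ and $\phi^k_{xx}(L)+\lambda\phi^k(L)=(-1)^n(\lambda-q_n^2)c_k$, and $\lambda\ne q_n^2$ because $\mathcal{N}_3\subset\mathcal{N}_{mixt}$; so whichever of the two traces is available on a controlled edge $k$, it is a \emph{nonzero} multiple of $c_k$. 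Therefore every active control forces $c_k=0$ on its own edge, giving $c_k=0$ for all $k\ne i_0$, and the Kirchhoff-type relation $\sum_j c_j=0$ then forces $c_{i_0}=0$ as well; hence $\phi=0$.

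\emph{Symmetric eigenvalues.} Fix $k$. Because $\lambda\notin\mathcal{N}_{mixt}$, $\mu_k$ is not an antisymmetric eigenvalue, and since $t\mapsto t^2-\lambda t$ is at most two-to-one, $\ker(\mathcal{A}-\mu_k)$ is spanned either by $\Phi_k$ alone or by $\Phi_k$ and a unique $\Phi_{k'}$ with $k'\ne k$ — the second possibility occurring exactly when $\lambda\in\mathcal{N}_1\cup\mathcal{N}_2$, which are \emph{not} excluded here. If $\mu_k$ is simple, then on $\Phi_k$ the $b$-trace equals $(-1)^k$ and the $a$-trace equals $(-1)^k(\lambda-(k\pi/L)^2)$ (for $k=0$ these are simply $1$ and $\lambda$), and the $a$-trace can vanish only if $\mu_k=(k\pi/L)^4-\lambda(k\pi/L)^2=0=\mu_0$, which contradicts simplicity; so any one active control already sees $\Phi_k$ and the map is injective. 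If instead $\ker(\mathcal{A}-\mu_k)=\mathrm{span}\{\Phi_k,\Phi_{k'}\}$, use a doubly-controlled edge: there the two available traces determine both $\phi(L)=s(-1)^k+s'(-1)^{k'}$ and $\phi_{xx}(L)=-s(k\pi/L)^2(-1)^k-s'(k'\pi/L)^2(-1)^{k'}$ for $\phi=s\Phi_k+s'\Phi_{k'}$, a $2\times 2$ system in $(s,s')$ with determinant $(-1)^{k+k'}\big((k'\pi/L)^2-(k\pi/L)^2\big)\ne 0$; hence $s=s'=0$. This last step is where $N\ge 3$ is indispensable, as it is precisely what guarantees the existence of an edge retaining both of its controls.

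Having verified injectivity on every eigenspace, one concludes with the moment-method construction recalled in the earlier sections: it produces controls in $(H^1(0,T))^N$, vanishing identically on $e_{i_0}$ and with a single nonzero component on $e_{j_0}$, driving any $y_0\in L^2(\Gamma)$ to $0$ at time $T$; the needed biorthogonal families exist because the eigenvalues grow like $j^4$, so that $\sum_j\mu_j^{-1}<\infty$ with uniformly bounded multiplicities, while the finitely many low, possibly unstable, modes are handled by the same injectivity. The only genuinely delicate point — and the reason for the hypothesis $N\ge 3$ — is the symmetric double-eigenvalue case: the antisymmetric modes are rescued ``for free'' by the constraint $\sum_j c_j=0$, whereas the fully symmetric modes do not feel it, so a coincidence $\mu_k=\mu_{k'}$ (which happens exactly for $\lambda\in\mathcal{N}_1\cup\mathcal{N}_2$) can be resolved only if some edge keeps both controls. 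Everything else is bookkeeping over which controls remain active on which edge.
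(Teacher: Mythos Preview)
Your proposal addresses only item (3) of the theorem; you treat items (1) and (2) as ``the other items'' already in hand. If that is indeed the intended scope, then your argument is correct. In fact, the paper's own Step~III for item (3) consists of a single sentence (``adapt the proof given in Step~I \ldots\ details are left to the reader''), so you have supplied precisely the details the paper omits.

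Your organisation differs from the paper's in a useful way. For item (1) the paper explicitly inverts, eigenspace by eigenspace, the linear system determining the moments $\int_0^T a^k(T-t)e^{-t\sigma_n}\,dt$ and $\int_0^T b^k(T-t)e^{-t\sigma_n}\,dt$; Step~III would have one redo this with one fewer control. You instead phrase the whole thing as an injectivity check of the map $\ker(\mathcal{A}-\mu)\to(\text{active traces})$, which is the dual formulation and makes the role of each hypothesis transparent: $\lambda\notin\mathcal{N}_{mixt}$ separates the symmetric and antisymmetric blocks; $\lambda\notin\mathcal{N}_{odd}$ keeps every antisymmetric eigenspace $(N-1)$-dimensional, so the constraint $\sum_j c_j=0$ together with $N-1$ controlled edges kills it; and $N\ge 3$ is exactly what provides a doubly controlled edge to resolve a possible double symmetric eigenvalue when $\lambda\in\mathcal{N}_1\cup\mathcal{N}_2$. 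The $2\times2$ determinant you compute on that edge is the same determinant the paper computes in Step~I (Case~I(2) and Case~I(3)).

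Two minor remarks. First, your passage from ``injectivity on every eigenspace'' to ``the moment problem is solvable in $H^1(0,T)$'' glosses over the quantitative input: one also needs uniform-in-$n$ bounds on the inverses of these finite matrices, which here follow from the explicit trace formulas $|\phi^k(L)|\asymp 1$ and $|\lambda\phi^k(L)+\phi^k_{xx}(L)|\asymp n^2$ (the paper's Lemma~\ref{l1}), together with the $\sigma_n\sim n^4$ gap for the Fattorini--Russell biorthogonal family. Second, if you were asked for the full theorem, your proposal is incomplete: item (1) requires the additional case $\lambda\in\mathcal{N}_{odd}$, where the antisymmetric eigenspace has dimension $2(N-1)$ and one must check that the $2\times2$ system on each of the $N-1$ doubly controlled edges is nonsingular (it is, by the same determinant computation as for the symmetric double case), and item (2) is a non-controllability statement requiring an explicit obstruction, which the paper exhibits in its Step~II.
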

\begin{remark}
The first statement in Theorem \ref{main theorem 1} asserts that  for any $\lambda \not \in  \mathcal{N}_{1}\cup \mathcal{N}_{odd}$  system \eqref{control}-(I) can be driven to the zero state acting with controls only on $N-1$ components within the $N$ edges of the tree. The inactive control input could be taken on any edge of the tree independently of the choice of the initial data $y_0$. The second statement in Theorem \ref{main theorem 1} ensures that the minimal number of control inputs needed to control system \eqref{control}-(I) is exactly $N-1$.

Similarly,  Theorem \ref{main theorem 2} says that it is sufficient to act only on $2N-2$ components of the system within the all $2N$ components to ensure the null-controllability of the problem. The second part of Theorem \ref{main theorem 2} represents the optimality  of acting on $2N-2$ components to control any initial data of system \eqref{control}-(II) when $\lambda \in \mathcal{N}_{odd}$. 
However, for some values of $\lambda\notin\mathcal{N}_{mixt}$, more precisely when $\lambda\notin\mathcal{N}_{odd}$, the number of the non-vanishing controls  may be reduced.
The third statement of Theorem \ref{main theorem 2} asserts that in this case we can obtain null-controllability with $2N-3$ active controls. 
\end{remark}

In order to prove Theorems \ref{main theorem 1} and \ref{main theorem 2} we make a careful spectral analysis of the corresponding elliptic  differential operator which allows us to transform the controllability problem into an equivalent moment problem.
The later will be solved combining   the general moment theory developed by Fattorini and Russell \cite{MR0335014} and the asymptotic analysis of the eigenvalues and the corresponding eigenfunctions. One of the main difficulty in applying directly the method in \cite{MR0335014} appears in both systems \eqref{control}-(I) and \eqref{control}-(II) and is represented by the presence of multiple eigenvalues for the corresponding eigenvalue problems as shown with precision in Lemma \ref{eigen_partition} and Lemma \ref{eigen_partition3}.  At that point it is important to note that, for making possible the existence of the controls, the choice of the parameter $\lambda$ plays an important role. It is also worth mentioning that the general theory in  \cite{MR0335014} allows to build solutions to the moment problem in $L^2(0,T)$  but in any space $H^s(0,T)$, $s \geq 0$, and in consequence in $C^\infty(0, T)$.

The extension of the above results to the case when the edges of the star shaped tree have different lengths needs a different approach. Following \cite{MR1769018}, other coupling conditions may be imposed at the internal node. The analysis of the controllability properties for the KS system with other coupling conditions at $x=0$ remains to be considered elsewhere.

The analysis described above is organized in two sections: in Section \ref{secI} we study problem \eqref{control}-(I) and prove Theorem \ref{main theorem 1}. Section \ref{secII} is devoted to problem \eqref{control}-(II)
and the proof of Theorem \ref{main theorem 2}.
The main effort we put in this paper concerns the proofs of Theorem \ref{main theorem 1} and Theorem \ref{main theorem 2} where we focus to obtain control results with minimal number of active control inputs. On the other hand we may also wonder in which conditions we can obtain control properties with no restrictions on the number of the control components. We answer to these questions  in section \ref{secf}.
Besides, in view of the spectral results in sections \ref{prelim1} and \ref{prelim2} we are able to obtain new control results for the linear KS on an interval which were not analyzed in \cite{MR2556747}. These aspects will be detailed in section \ref{secf2}.

\section{The KS equation of type (I)}\label{secI}
The controllability problem \eqref{control}-(I) will be studied by using the method of moments due to Fattorini and Russell \cite{MR0335014}. Therefore, a careful spectral analysis of the involved elliptic operator is necessary.
\subsection{Spectral analysis}\label{spectanal}
For any $\lambda>0$ let us consider the following spectral problem on $\Gamma$:
\be\label{spect}
\left\{\begin{array}{ll}
 \la \phi^k_{xx}+\phi^k_{xxxx}=\sigma \phi^k, & x\in (0,L)\\[10pt]
\phi^i(0)=\phi^j(0), &  \quad  i,j \in \{1, \ldots, N\}\\[10pt]
\phi^k(L)=\phi_x^k(L)=0, & k\in \{1, \ldots, N\}\\[10pt]
\sum_{k=1}^{N}\phi^k_x(0)=0, & \\[10pt]
\phi^i_{xx}(0)=\phi^j_{xx}(0), & \quad  i, j \in \{1, \ldots, N\}\\[10pt]
\sum_{k=1}^{N}\phi^k_{xxx}(0)=0. & \\[10pt]
\end{array}\right.
\ee

To begin with, we introduce the fourth order operator
$$A:  D(A)\subset L^2(\Gamma)\rightarrow L^2(\Gamma)$$ given by
\be\label{op}
\l\{\begin{array}{ll}
      A\phi^k=\la \phi^k_{xx}+\phi^k_{xxxx},\quad k \in \{1, \ldots, N\}  \\[10pt]
      D(A)=\l\{\begin{array}{ll} \phi=(\phi^k)_{k=1, N} \in H^4(\Gamma) \ | \ \phi^k(L)=\phi_x^k(L)=0, \quad k \in \{1, \ldots, N\}, \\[5pt] \phi^i(0)=\phi^j(0), \  \phi^i_{xx}(0)=\phi^j_{xx}(0), \quad i, j \in \{1, \ldots, N\},\\[5pt]
                 \sum_{k=1}^{N}\phi^k_x(0)=0, \          \sum_{k=1}^{N}\phi^k_{xxx}(0)=0
               \end{array}\r\}.
    \end{array}\r.
\ee
Remark that spectral problem \eqref{spect} is equivalent to
\be\label{equiv}
\l\{\begin{array}{ll}
      A\phi= \sigma \phi,\\[5pt]
      \phi \in D(A).
      \end{array}\r.
\ee
To study  this eigenvalue problem  we firstly claim that
 \begin{proposition}\label{p1}
 For any $\mu> \la^2/4$ the operator
 $$A+\mu I :D(A)\subset L^2(\Gamma)\rightarrow L^2(\Gamma),$$
 is a non-negative self-adjoint operator with compact inverse. In particular, it has a pure discrete spectrum consisted by a sequence of nonnegative eigenvalues $\{\sigma_{\mu, n}\}_{n\in \nn}$ satisfying $\lim_{n\to \infty}\sigma_{\mu, n}=\infty$.  Moreover, up to a normalization, the corresponding eigenfunctions $\{\phi_{\mu, n}\}_{n\in \nn}$ form an orthonormal basis of $L^2(\Gamma)$.
 \end{proposition}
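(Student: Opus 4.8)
The strategy is to verify the hypotheses of the classical spectral theorem for self-adjoint operators with compact resolvent. First I would establish that $A+\mu I$ is symmetric on $D(A)$ and, in fact, bounded below. Given $\phi\in D(A)$, I integrate by parts twice on each edge:
\[
\langle A\phi,\phi\rangle_{L^2(\Gamma)}
=\sum_{k=1}^N\int_0^L\bigl(\la\phi^k_{xx}\phi^k+\phi^k_{xxxx}\phi^k\bigr)\dx
=\sum_{k=1}^N\int_0^L\bigl(\phi^k_{xx}\bigr)^2\dx-\la\sum_{k=1}^N\int_0^L\bigl(\phi^k_x\bigr)^2\dx+\mathrm{BT},
\]
where all boundary terms $\mathrm{BT}$ collapse: at $x=L$ every $\phi^k$ and $\phi^k_x$ vanish, and at $x=0$ the coupling conditions in $D(A)$ (continuity of $\phi$ and $\phi_{xx}$ together with the Kirchhoff-type conditions $\sum_k\phi^k_x(0)=\sum_k\phi^k_{xxx}(0)=0$) kill the cross terms $\sum_k\phi^k_x(0)\phi^k(0)$ and $\sum_k\phi^k_{xxx}(0)\phi^k(0)$. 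The same integration by parts applied to $\langle A\phi,\psi\rangle$ versus $\langle\phi,A\psi\rangle$ shows symmetry. Then, using the interpolation/Young inequality $\la\int_0^L(\phi^k_x)^2\le\int_0^L(\phi^k_{xx})^2+\tfrac{\la^2}{4}\int_0^L(\phi^k)^2$ on each edge, one gets
\[
\langle(A+\mu I)\phi,\phi\rangle_{L^2(\Gamma)}\ge\Bigl(\mu-\tfrac{\la^2}{4}\Bigr)\|\phi\|_{L^2(\Gamma)}^2\ge0
\]
for $\mu>\la^2/4$, so $A+\mu I$ is non-negative.

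Next I would show $A+\mu I$ is onto, i.e.\ for every $f\in L^2(\Gamma)$ the problem $(A+\mu I)\phi=f$ has a solution in $D(A)$; combined with the coercivity estimate this gives $0\in\rho(A+\mu I)$ and, since $A+\mu I$ is symmetric with nonempty resolvent set, it is self-adjoint. The natural route is Lax–Milgram on the form
\[
a(\phi,\psi)=\sum_{k=1}^N\int_0^L\phi^k_{xx}\psi^k_{xx}\dx-\la\sum_{k=1}^N\int_0^L\phi^k_x\psi^k_x\dx+\mu\sum_{k=1}^N\int_0^L\phi^k\psi^k\dx
\]
on the form domain $V=\{\phi\in H^2(\Gamma):\phi^k(L)=\phi^k_x(L)=0,\ \phi^i(0)=\phi^j(0)\}$, which embeds compactly in $L^2(\Gamma)$ by Rellich on each edge. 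The coercivity of $a$ on $V$ for $\mu>\la^2/4$ is exactly the inequality above; continuity is clear. Lax–Milgram yields a unique $\phi\in V$ with $a(\phi,\psi)=\langle f,\psi\rangle$ for all $\psi\in V$, and a standard bootstrap (testing first against $\psi$ supported in the interior of one edge gives $\phi^k\in H^4(0,L)$ edgewise, then varying $\psi$ recovers the natural boundary conditions $\phi^i_{xx}(0)=\phi^j_{xx}(0)$, $\sum_k\phi^k_x(0)=\sum_k\phi^k_{xxx}(0)=0$) shows $\phi\in D(A)$. Hence $(A+\mu I)^{-1}:L^2(\Gamma)\to V\hookrightarrow L^2(\Gamma)$ exists, is bounded, and is compact as the composition of a bounded map into $V$ with the compact embedding.

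Finally, the spectral theorem for a non-negative self-adjoint operator with compact inverse gives a discrete spectrum $0\le\sigma_{\mu,1}\le\sigma_{\mu,2}\le\cdots\to\infty$ and an orthonormal basis of eigenfunctions $\{\phi_{\mu,n}\}_{n\in\nn}$ of $L^2(\Gamma)$, which is the assertion. The only mildly delicate point — and the one I would write out most carefully — is the density/regularity bootstrap identifying the variational solution's natural boundary conditions with those defining $D(A)$: one must check that the specific combination of Kirchhoff conditions at the central vertex is precisely what the weak formulation produces, with no spurious extra constraints and no missing ones. Everything else (symmetry, the Young-inequality coercivity, compactness via Rellich) is routine.
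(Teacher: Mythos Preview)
Your approach is essentially the same as the paper's: symmetry via integration by parts, non-negativity via the Young-type inequality, surjectivity via Lax--Milgram on a form domain, and compactness via the embedding $H^2(\Gamma)\hookrightarrow L^2(\Gamma)$.

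There is, however, a concrete slip in the choice of form domain. Your $V$ omits the Kirchhoff condition $\sum_k\phi^k_x(0)=0$, and this condition is essential, not natural. First, the interpolation inequality you invoke does \emph{not} hold edge-by-edge: the identity $\int_0^L(\phi^k_x)^2\dx=-\int_0^L\phi^k_{xx}\phi^k\dx$ picks up the boundary term $-\phi^k_x(0)\phi^k(0)$, which vanishes only after summing over $k$ and using both $\phi^i(0)=\phi^j(0)$ \emph{and} $\sum_k\phi^k_x(0)=0$. Without the latter condition in $V$, coercivity of $a(\cdot,\cdot)$ fails. Second, in the bootstrap, if the $\psi^k_x(0)$ are unconstrained then the term $\sum_k\phi^k_{xx}(0)\psi^k_x(0)$ forces $\phi^k_{xx}(0)=0$ for every $k$, which is strictly stronger than the equality $\phi^i_{xx}(0)=\phi^j_{xx}(0)$ required in $D(A)$; and the remaining boundary contribution yields only the combined condition $\sum_k(\phi^k_{xxx}(0)+\la\phi^k_x(0))=0$ rather than the two separate Kirchhoff conditions. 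The paper's $V$ includes $\sum_k\phi^k_x(0)=0$ as an essential constraint, and then the weak formulation produces exactly $\phi^i_{xx}(0)=\phi^j_{xx}(0)$ and $\sum_k\phi^k_{xxx}(0)=0$ as natural conditions. You rightly flagged this step as the delicate one; the fix is precisely to enlarge the constraints in $V$ by that one Kirchhoff condition.
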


\proof

Firstly, it is easy to observe that $D(A)$ is dense in $L^2(\Gamma)$. Next the proof will be done in several steps.

\noindent \textit{Step 1:  $A$ is a symmetric operator.}  Indeed, after integrations by parts, we get
\begin{align}
(Au, v)_{L^2(\Gamma)}&=(u, Av)_{L^2(\Gamma)}\nonumber\\
&=\sum_{k=1}^{N} \intx (u^k_{xx}v^k_{xx}-\la u^k_xv^k_x) \dx, \quad \forall u, v \in D(A).\nonumber
\end{align}

\noindent\textit{Step 2:  $A+\mu I$ is maximal monotone for any $\mu> \la^2/4$.}
Firstly, let us show the monotonicity property
\[
((A+\mu I)u, u)_{L^2(\Gamma)}\geq  0, \quad \forall u\in D(A).
\]
Indeed,
\be\label{pr1}
((A+\mu I )u, u)_{L^2(\Gamma)}=\sum_{k=1}^{N} \intx (|u^k_{xx}|^2+\mu |u^k|^2-\la |u^k_x|^2) \dx.
\ee
On the other hand for $u\in D(A)$ we have
\begin{align*}
\sum_{k=1}^{N} \intx u^k_{xx}u^k \dx &=\sum_{k=1}^{N } u^k_x u^k \bdx -\sum_{k=1}^{N} \intx |u^k_x|^2 \dx\\
&=   -\sum_{k=1}^{N} \intx |u^k_x|^2 \dx.
\end{align*}
 Therefore, from  the inequality of arithmetic and geometric means it holds
 \be\label{pr2}
 \sum_{k=1}^{N} \intx |u^k_x|^2 \dx \leq \sum_{k=1}^{N} \intx \l(\frac{1}{\la} |u^k_{xx}|^2+ \frac{\la}{4}|u^k|^2 \r)\dx.
 \ee
 Combining \eqref{pr1} and \eqref{pr2} we obtain
 $$((A+\mu I)u, u)_{L^2(\Gamma)}\geq \l(\mu -\frac{\la^2}{4}\r)\sum_{k=1}^{N} \intx |u^k|^2\dx, $$
which is nonnegative for $\mu> \la^2 /4$.

  Next we emphasize that  $A+\mu I$ is maximal, i.e., for any  $f \in L^2(\Gamma)$  there exists a unique $u\in D(A)$ such that
$
 (A+\mu I)u=f.
$
To do that, first we  consider the variational formulation
\be\label{var}
\l\{\begin{array}{l}
  a(u, v)=(f, v)_{L^2(\Gamma)}, \quad \forall v\in V \\
  u\in V,
\end{array}\right.
\ee
where $V$ denotes the Hilbert space
\[
V=\l\{\begin{array}{ll} \phi=(\phi^k)_{k=1, N} \in H^2(\Gamma) \ | \ \phi^k(L)=\phi_x^k(L)=0, \quad k \in \{1, \ldots, N\}, \\[5pt] \phi^i(0)=\phi^j(0), \ i, j \in \{1, \ldots, N\},\ \sum_{k=1}^{N}\phi^k_x(0)=0 \\
          \end{array}\r\}
\] endowed with the $H^2(\Gamma)$-norm
and $a(\cdot, \cdot):V\times V\to \rr$ denotes the bilinear form
$$a(u, v)=\sum_{k=1}^{N}\intx \l( u^k_{xx}v^k_{xx} -\la u^k_x v^k_x +\mu u^k v^k \r)\dx.  $$
It is easy to see that $a(\cdot, \cdot)$ is symmetric and continuous. In addition, it is also coercive. Indeed, let $\delta>0$ small enough such that $\mu> \la^2/(4-4\delta)$. Then, arguing as in \eqref{pr2} we obtain
 \[
 \sum_{k=1}^{N} \intx |u^k_x|^2 \dx \leq \sum_{k=1}^{N} \intx \l(\frac{1-\delta}{\la} |u^k_{xx}|^2+ \frac{\la}{4(1-\delta)}|u^k|^2 \r)\dx,
 \]
 which together with \eqref{pr1} leads to
$$a(u,u)\geq \delta \sum_{k=1}^{N} \intx |u^k_{xx}|^2\dx+ \l(\mu -\frac{\la^2}{4-4\delta}\r)\sum_{k=1}^{N} \intx |u^k|^2\dx,  $$
and so $a$ is coercive. Applying Lax-Milgram lemma we ensure the existence of a unique $u \in V$ satisfying the variational problem \eqref{var}. In order to justify the maximality of $A+\mu I$ it is sufficient to show that the solution $u$ of \eqref{var} belongs actually to $D(A)$. For that, we refer to the classical regularity arguments for elliptic operators (see, for instance, \cite{MR697382}).

\noindent \textit{Step 3: $A$ is a self-adjoint operator with compact inverse.}
 Since $A+\mu I :D(A)\subset L^2(\Gamma)\to L^2(\Gamma)$ is a symmetric operator and maximal monotone, it is a self-adjoint operator (see, e.g., \cite{MR697382}).

Moreover, we have that the linear operator $(A+\mu I)^{-1}:L^2(\Gamma)\to L^2(\Gamma)$, given by
$$(A+\mu I)^{-1} f=u \in D(A)\subset L^2(\Gamma),$$
satisfies
$$\| (A+\mu I)^{-1} f \|_{H^2(\Gamma)}\leq C \|f\|_{L^2(\Gamma)},$$
for some positive constant $C>0$.  Since the embedding $H^2(\Gamma)\hookrightarrow L^2(\Gamma)$ is compact it follows that
$(A+\mu I)^{-1}$ is a compact operator. Then applying the classical spectral results for compact self-adjoint operators we conclude the proof of Proposition \ref{p1}.
\endproof

 \begin{remark}
The spectrum $\{\sigma_n\}_{n\in \nn}$ of problem \eqref{equiv} is obtained by shifting the spectrum of $A+\mu I$ in Proposition \ref{p1}, i.e.,
$$\sigma_n:=\sigma_{\mu, n}-\mu,  \quad \forall n\in \nn,$$
with the corresponding eigenfunctions
$$\phi_{n}:=\phi_{\mu, n}, \quad \|\phi_n\|_{L^2(\Gamma)}=1, \quad \forall n\in \nn.$$
In particular, it holds that
\begin{equation}\label{spectrumA}
-\frac{\lambda^2}{4}\leq \sigma_n, \ \forall n\in \nn, \quad \quad \sigma_n \rightarrow \infty, \textrm { as }n\rightarrow \infty.
\end{equation}
 \end{remark}
 The following proposition will be also very useful in our analysis.
 \begin{proposition}\label{propp}
 The lower bound $-\lambda^2 /4$ is not an eigenvalue for the operator $A$.
  \end{proposition}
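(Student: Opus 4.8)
The plan is to argue by contradiction. Suppose $\phi=(\phi^k)_{k=1,N}\in D(A)$ is an eigenfunction of $A$ associated to $-\lambda^2/4$, i.e. $A\phi=-\tfrac{\lambda^2}{4}\phi$ with $\phi\not\equiv 0$; I will show that in fact $\phi=0$, which is absurd.

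First I would evaluate the quadratic form of $A$ on $\phi$. Exactly as in Step~1 of the proof of Proposition~\ref{p1}, integrating by parts twice and using $\phi^k(L)=\phi^k_x(L)=0$ together with the coupling conditions at $x=0$ (the common values of $\phi^k(0)$ and $\phi^k_{xx}(0)$, and $\sum_k\phi^k_x(0)=\sum_k\phi^k_{xxx}(0)=0$), all boundary contributions cancel and one gets
\[
(A\phi,\phi)_{L^2(\Gamma)}=\sum_{k=1}^N\int_0^L\bigl(|\phi^k_{xx}|^2-\lambda|\phi^k_x|^2\bigr)\dx .
\]
Since $A\phi=-\tfrac{\lambda^2}{4}\phi$, the left-hand side equals $-\tfrac{\lambda^2}{4}\|\phi\|_{L^2(\Gamma)}^2$, so
\[
\sum_{k=1}^N\int_0^L\Bigl(|\phi^k_{xx}|^2-\lambda|\phi^k_x|^2+\tfrac{\lambda^2}{4}|\phi^k|^2\Bigr)\dx=0 .
\]

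The key step is then to recognise this as a sum of perfect squares. Integrating by parts once more and using (as in the proof of Proposition~\ref{p1}) that $\sum_k\int_0^L\phi^k_{xx}\phi^k\dx=-\sum_k\int_0^L|\phi^k_x|^2\dx$ (here the $x=0$ boundary term vanishes only \emph{after} summing over $k$, since only $\sum_k\phi^k_x(0)$ and the common value $\phi^k(0)$ are controlled, not the individual factors), the last identity becomes
\[
\sum_{k=1}^N\int_0^L\Bigl|\phi^k_{xx}+\tfrac{\lambda}{2}\phi^k\Bigr|^2\dx=0 .
\]
Hence $\phi^k_{xx}+\tfrac{\lambda}{2}\phi^k=0$ in $(0,L)$ for every $k$; since $\phi^k\in H^4(0,L)$ this holds in the classical sense.

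Finally I would integrate the ODE: each $\phi^k$ has the form $\phi^k(x)=\alpha_k\cos(\omega x)+\beta_k\sin(\omega x)$ with $\omega=\sqrt{\lambda/2}>0$. Imposing $\phi^k(L)=\phi^k_x(L)=0$ yields a homogeneous linear system for $(\alpha_k,\beta_k)$ whose determinant is the Wronskian of $\cos(\omega\,\cdot)$ and $\sin(\omega\,\cdot)$ at $x=L$, namely $\omega\neq 0$; therefore $\alpha_k=\beta_k=0$ and $\phi^k\equiv 0$ for all $k$, i.e. $\phi=0$, contradicting $\phi\not\equiv 0$. This proves the proposition. The only genuinely delicate point is the bookkeeping of the boundary terms at the interior vertex $x=0$, which must be summed over the edges before they cancel; once that is in place, the perfect-square rewriting and the ODE uniqueness argument are routine.
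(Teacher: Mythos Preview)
Your proof is correct and follows essentially the same approach as the paper: both rewrite $\bigl((A+\tfrac{\lambda^2}{4})\phi,\phi\bigr)_{L^2(\Gamma)}$ as the sum of perfect squares $\sum_k\int_0^L|\phi^k_{xx}+\tfrac{\lambda}{2}\phi^k|^2\dx$, deduce the second-order ODE $\phi^k_{xx}+\tfrac{\lambda}{2}\phi^k=0$, and then use the two boundary conditions at $x=L$ to force $\phi^k\equiv 0$. Your version is simply more explicit about the intermediate integration-by-parts identity and the Wronskian computation, and you correctly flag that the $x=0$ boundary terms cancel only after summing over $k$.
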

   \proof If $(-\lambda^2/4, \phi)$ is an eigenpair for $A$, then $A\phi=\lambda^2/4\phi$ for some nontrivial $\phi=(\phi^k)_{k=1, N}\in D(A)$. Then, integration by parts leads to
\[
0=\left(\left(A+\frac{\lambda^2}{4}\right)\phi, \phi\right)_{L^2(\Gamma)}=\sum_{k=1}^{N} \int_0^L \left(\phi_{xx}^k+\frac{\lambda}{2}\phi^k\right)^2 dx.
\]
Therefore, for any $k\in \{1, \ldots, N\}$ the component $\phi^k$ must satisfy the equation $\phi_{xx}^k+\frac{\lambda}{2}\phi^k=0$ in $(0, L)$ subject to the boundary conditions $\phi^k(L)=\phi_x^k(L)=0$. These conditions allow us to obtain $\phi^k\equiv 0$, for all $k\in \{1, \ldots, N\}$ which is in contradiction with $\phi\not \equiv 0$.
\endproof

\subsection{Qualitative properties of the eigenvalues}\label{spectqual}

The main goal of this section is to provide an asymptotic formula for the behavior of eigenvalues of system \eqref{spect}. Particularly, this result will play an important role to prove the null-controllability of problem \eqref{control}-(I).

For any fixed $\lambda>0$  let us firstly consider the following two eigenvalue problems on the interval $(0, L)$:
\begin{equation}\label{eign.1.tree}
\left\{
\begin{array}{ll}
\lambda \Psi_{xx}+\Psi_{xxxx}=\sigma \Psi, & x\in (0,L),\\[10pt]
 \Psi_x(0)=0, \ \Psi_{xxx}(0)=0 , &\\[10pt]
 \Psi(L)=\Psi_x(L)=0 &\\[10pt]
\end{array}
\right.
\end{equation}
and
\begin{equation}\label{eign2}
\left\{
\begin{array}{ll}
\lambda \Phi_{xx}+\Phi_{xxxx}=\sigma \Phi, & x\in (0,L),\\[10pt]
 \Phi(0)=0, \ \Phi_{xx}(0)=0 , &\\[10pt]
 \Phi(L)=\Phi_{x}(L)=0, &\\[10pt]
\end{array}
\right.
\end{equation}
respectively. As in Section \ref{spectanal} we can easily show that both  systems \eqref{eign.1.tree} and \eqref{eign2} possess a sequence of eigenvalues $\{\sigma_n\}_{n\geq 0}$ which tends to infinity and is strictly bounded from below by $-\lambda^2/4$.  Before going through let us fix some notations which will be useful in the forthcoming sections. For any $\lambda>0$  we denote
\be\label{notations}
\left\{\begin{array}{ll}
 \alpha:=\sqrt{\frac{-\lambda+\sqrt{\lambda^2 +4\sigma}}{2}}, \quad   \beta:=\sqrt{\frac{\lambda+\sqrt{\lambda^2 +4\sigma}}{2}}, & \textrm{ if } \sigma\geq 0\\[10pt]
  \gamma:=\sqrt{\frac{\lambda-\sqrt{\lambda^2 +4\sigma}}{2}} \quad \beta:=\sqrt{\frac{\lambda+\sqrt{\lambda^2 +4\sigma}}{2}}, & \textrm{ if } -\frac{\lambda^2}{4}< \sigma<0
\end{array}\right.
\ee
for which we have the relations
\be\label{connection}
\left\{\begin{array}{ll}
   \beta^2-\alpha^2 =\lambda, & \textrm{ if } \sigma \geq 0  \\[10pt]
  \beta^2+\gamma^2=\lambda, &  \textrm{  if } -\frac{\lambda^2}{4}< \sigma< 0
\end{array}\right.
\ee
and
\be\label{sigma}
\sigma=\left\{\begin{array}{ll}
\alpha^2 \beta^2, & \textrm{ if } \sigma \geq 0\\[10pt]
-\beta^2 \gamma^2, & \textrm{  if } -\frac{\lambda^2}{4}< \sigma \leq 0.
\end{array}\right.
\ee

Coming back to our spectral problem \eqref{spect},  we introduce the functions
\begin{equation}\label{functie_suma}
S:=\sum_{k=1}^{N} \phi^k
\end{equation}
and
\begin{equation}\label{functie_diferenta}
D^k:=\phi^k - \frac{S}{N},  \quad k\in \{1, \ldots, N\}.
\end{equation}
The motivation for analyzing systems \eqref{eign.1.tree} and \eqref{eign2} is due to the fact that $S$ verifies \eqref{eign.1.tree} whereas $D^k$ satisfies \eqref{eign2} for all $k\in \{1, \ldots, N\}$.

Next we state and prove some preliminary results.

\begin{lemma}\label{not.common}
	For any $\lambda>0$ the eigenvalue problems \eqref{eign.1.tree} and \eqref{eign2} have no any common eigenvalue $\sigma$.  In addition, any eigenvalue of either \eqref{eign.1.tree} or \eqref{eign2} is simple.

Moreover, if $\la  \not \in \mathcal{N}_0/4L^2$ then any  eigenfunction $\phi$ of either \eqref{eign.1.tree} or \eqref{eign2} satisfies $\phi_{xx}(L)\neq 0.$
\end{lemma}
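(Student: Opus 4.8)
The plan is to reduce the statement to the explicit solution of the ODE $\la\,\phi_{xx}+\phi_{xxxx}=\sigma\phi$ on $(0,L)$. Recall from the discussion preceding the lemma that every eigenvalue $\sigma$ of \eqref{eign.1.tree} or \eqref{eign2} satisfies $\sigma>-\la^{2}/4$ strictly, so in the notation \eqref{notations} we have $\alpha>0$, $\beta>0$ when $\sigma>0$ and $0<\gamma<\beta$ when $-\la^{2}/4<\sigma<0$, with \eqref{connection} and \eqref{sigma} at hand. Since the equation is invariant under $x\mapsto-x$, the two Cauchy conditions at $x=0$ impose a genuine rank-two constraint: writing the general solution of the ODE in the basis $\{\cosh\alpha x,\sinh\alpha x,\cos\beta x,\sin\beta x\}$ (respectively $\{\cos\gamma x,\sin\gamma x,\cos\beta x,\sin\beta x\}$, respectively $\{1,x,\cos\sqrt\la x,\sin\sqrt\la x\}$ when $\sigma=0$), the conditions $\Psi_x(0)=\Psi_{xxx}(0)=0$ of \eqref{eign.1.tree} force the $\sinh$/$\sin$ (respectively $x$/$\sin$) coefficients to vanish, the relevant $2\times2$ determinant being $-\alpha\beta(\alpha^{2}+\beta^{2})$, $\gamma\beta(\gamma^{2}-\beta^{2})$ or $-\la^{3/2}$, all nonzero. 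Hence every eigenfunction of \eqref{eign.1.tree} is, according to the sign of $\sigma$, of the form $A\cosh(\alpha x)+C\cos(\beta x)$, $A\cos(\gamma x)+C\cos(\beta x)$ or $A+C\cos(\sqrt\la x)$; symmetrically, using $\Phi(0)=\Phi_{xx}(0)=0$, every eigenfunction of \eqref{eign2} is of the form $B\sinh(\alpha x)+E\sin(\beta x)$, $B\sin(\gamma x)+E\sin(\beta x)$ or $Bx+E\sin(\sqrt\la x)$. Substituting the remaining conditions $\phi(L)=\phi_x(L)=0$ turns each case into a homogeneous $2\times2$ linear system, and the rest of the argument proceeds from its coefficient matrix.

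Simplicity then follows immediately, with no restriction on $\la$: an eigenvalue is multiple exactly when that matrix is the zero matrix, but it always carries a nonzero entry — $\cosh(\alpha L)>0$ or $\sinh(\alpha L)>0$ when $\sigma>0$, the constants $1$ and $L$ when $\sigma=0$ — while for $\sigma<0$ the vanishing of all four entries would require $\cos(\gamma L)=\sin(\gamma L)=0$, which is impossible.

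For the absence of a common eigenvalue (for which the hypothesis $\la\notin\mathcal{N}_0/4L^2$ will be used) I would assume $\sigma$ is an eigenvalue of both problems, so the two $2\times2$ determinants vanish. If $\sigma>0$, rewriting one determinant relation as $\beta\tan(\beta L)=-\alpha\tanh(\alpha L)$ and the other as $\beta\cot(\beta L)\tanh(\alpha L)=\alpha$ and multiplying them cancels all transcendental factors, leaving $\beta^{2}=-\alpha^{2}$ — absurd; the degenerate cases $\cos(\beta L)=0$ and $\sin(\beta L)=0$ are dispatched directly, using $\cosh(\alpha L)\sinh(\alpha L)>0$. If $\sigma=0$, then $\sigma$ is an eigenvalue of \eqref{eign.1.tree} only when $\sin(\sqrt\la L)=0$, which is incompatible with the relation $L\sqrt\la\cos(\sqrt\la L)=\sin(\sqrt\la L)$ characterizing $\sigma=0$ as an eigenvalue of \eqref{eign2}. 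If $-\la^{2}/4<\sigma<0$, dividing the two determinant relations $\gamma\tan(\gamma L)=\beta\tan(\beta L)$ and $\beta\tan(\gamma L)=\gamma\tan(\beta L)$ (in the branch where none of $\sin(\gamma L),\cos(\gamma L),\sin(\beta L),\cos(\beta L)$ vanishes) gives $\gamma^{2}=\beta^{2}$, impossible since $\gamma<\beta$; an exhaustive check of the four remaining possibilities leaves only $\sin(\gamma L)=\sin(\beta L)=0$ — whence $\gamma L=p\pi$, $\beta L=q\pi$ with $1\leq p<q$, so $\la=\gamma^{2}+\beta^{2}=\tfrac{\pi^{2}}{4L^{2}}\bigl((2p)^{2}+(2q)^{2}\bigr)\in\mathcal{N}_1$ by \eqref{connection} — and $\cos(\gamma L)=\cos(\beta L)=0$ — whence $\gamma L=(p+\tfrac12)\pi$, $\beta L=(q+\tfrac12)\pi$ with $0\leq p<q$, so $\la\in\mathcal{N}_{odd}$. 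Both conclusions place $\la$ in $\mathcal{N}_1\cup\mathcal{N}_{odd}=\mathcal{N}_0/4L^2$ by \eqref{rel}, contradicting the hypothesis.

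Finally, for the non-degeneracy claim I would suppose some eigenfunction $\phi$ (of \eqref{eign.1.tree} or of \eqref{eign2}, with eigenvalue $\sigma$) satisfies $\phi_{xx}(L)=0$; then $\phi(L)=\phi_x(L)=\phi_{xx}(L)=0$. Substituting $\phi(L)=\phi_{xx}(L)=0$ into the explicit forms above, the resulting $2\times2$ determinant equals a nonzero constant times $\cosh(\alpha L)\cos(\beta L)$, $\sinh(\alpha L)\sin(\beta L)$, $\cos(\gamma L)\cos(\beta L)$ or $\sin(\gamma L)\sin(\beta L)$; since $\cosh,\sinh>0$ at $\alpha L>0$, at least one trigonometric factor (at $\beta L$, or — only possible when $\sigma<0$ — at $\gamma L$) must vanish. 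If exactly one vanishes, $\phi$ reduces to a single mode $\cos(\beta x)$, $\cos(\gamma x)$, $\cos(\sqrt\la x)$, $\sin(\beta x)$, $\sin(\gamma x)$ or $\sin(\sqrt\la x)$ whose argument at $x=L$ is a zero of the corresponding cosine (respectively sine), so $\phi_x(L)\neq0$ — contradicting $\phi_x(L)=0$. If both vanish, which can only happen when $\sigma<0$, then $\cos(\gamma L)=\cos(\beta L)=0$ (case \eqref{eign.1.tree}) forces $\la\in\mathcal{N}_{odd}$ and $\sin(\gamma L)=\sin(\beta L)=0$ (case \eqref{eign2}) forces $\la\in\mathcal{N}_1$, both excluded since $\la\notin\mathcal{N}_0/4L^2$. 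Hence $\phi_{xx}(L)\neq0$. The only real difficulty in the whole proof is the bookkeeping in the range $-\la^{2}/4<\sigma<0$ — running cleanly through the subcases in which one of $\sin(\gamma L),\cos(\gamma L),\sin(\beta L),\cos(\beta L)$ vanishes and matching the surviving constraints on $\la$ with $\mathcal{N}_1$ and $\mathcal{N}_{odd}$ via \eqref{connection} and \eqref{rel}; the one genuinely useful device is the multiplicative combination of the two characteristic relations, which exposes the impossible identities $\beta^{2}=-\alpha^{2}$ and $\gamma^{2}=\beta^{2}$.
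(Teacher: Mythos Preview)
Your approach is correct and genuinely different from the paper's. The paper argues by extending solutions of \eqref{eign.1.tree} evenly and solutions of \eqref{eign2} oddly to $(-L,L)$, thereby reducing both to the single clamped problem \eqref{eign5} on $(0,1)$; it then quotes \cite{MR2556747} for the simplicity of the eigenvalues of \eqref{eign5} (which immediately forces an even eigenfunction and an odd eigenfunction sharing an eigenvalue to vanish) and for the non-vanishing of $\phi_{xx}$ at the endpoint when $\la(2L)^2\notin\mathcal{N}_0$. You instead stay on $(0,L)$ and carry out an explicit, self-contained computation in the solution basis, reducing everything to $2\times2$ systems and combining the two characteristic relations multiplicatively to produce the absurdities $\beta^2=-\alpha^2$ or $\gamma^2=\beta^2$. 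The extension argument is conceptually cleaner and avoids case splitting; your argument is more elementary and more informative, since it pins down exactly which trigonometric constraints must fail.

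One point deserves comment. You invoke the hypothesis $\la\notin\mathcal{N}_0/4L^2$ already for the no-common-eigenvalue claim, whereas the lemma asserts it for every $\la>0$. In fact your case analysis is sharp here: when $\la\in\mathcal{N}_1$, say $\gamma L=p\pi$ and $\beta L=q\pi$ with $1\le p<q$, the value $\sigma=-\gamma^2\beta^2<0$ is a common eigenvalue --- one checks directly that $\Psi(x)=\cos(\gamma x)-(-1)^{p+q}\cos(\beta x)$ solves \eqref{eign.1.tree} while $\Phi(x)=q\sin(\gamma x)-(-1)^{p+q}p\sin(\beta x)$ solves \eqref{eign2} --- and an analogous pair exists when $\la\in\mathcal{N}_{odd}$. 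Thus the first assertion of the lemma, read literally for all $\la>0$, is too strong; the paper's proof inherits this through the cited simplicity of \eqref{eign5}, which likewise breaks down at these exceptional parameters. This is harmless for the paper's applications: the lemma is only ever used under the standing assumption $\la\notin\mathcal{N}_0/4L^2$, and for the positive eigenvalues needed in Lemma~\ref{asymptbehavior} your argument already gives the conclusion for every $\la>0$. So what looks at first like a gap in your proof is in fact a sharpening of the statement.
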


\begin{proof}
First we show that problems \eqref{eign.1.tree} and \eqref{eign2} have no common eigenvalues.
Let us assume that there exists $\sigma$ and two functions $\Psi$ and $\Phi$ not identically vanishing,  satisfying  \eqref{eign.1.tree} and \eqref{eign2}, respectively.

The boundary conditions at $x=0$ in \eqref{eign.1.tree} and \eqref{eign2} allow to introduce the even and odd extensions  of $\Psi$, respectively $\Phi$,  with respect to $x=0$. More precisely, we consider
\[
 \ov{\Psi}(x):=\l\{\begin{array}{ll}
               \Psi(x), & x\in [0, L]\\
               \Psi(-x), & x\in [-L, 0]
             \end{array}\r.
\]
and
\[
\ov{\Phi}(x):=\l\{\begin{array}{ll}
               \Phi(x), & x\in [0, L]\\
               -\Phi(-x), & x\in [-L, 0].
             \end{array}\r.
\]

Then $\ov{\Psi}$ and $\ov{\Phi}$ verify
\begin{equation}\label{eign4}
\left\{
\begin{array}{ll}
\lambda \ov{\Psi}_{xx}+\ov{\Psi}_{xxxx}=\sigma \ov{\Psi}, & x\in (-L,L)\\[10pt]
 \ov{\Psi}(-L)=\ov{\Psi}(L)=\ov{\Psi}_x(-L)=\ov{\Psi}_x(L)=0 &
\end{array}
\right.
\end{equation}
and
\begin{equation}\label{eign3}
\left\{
\begin{array}{ll}
\lambda \ov{\Phi}_{xx}+\ov{\Phi}_{xxxx}=\sigma \ov{\Phi}, & x\in (-L,L)\\[10pt]
 \ov{\Phi}(-L)=\ov{\Phi}(L)=\ov{\Phi}_x(-L)=\ov{\Phi}_x(L)=0, &
\end{array}
\right.
 \end{equation}
 respectively. Finally, let us  denote
$$\hat{\Psi}(y):=\ov{\Psi}(2L y-L), \quad \hat{\Phi}(y):=\ov{\Phi}(2L y-L), \quad y\in (0,1).$$
In view of \eqref{eign4} and \eqref{eign3} it follows that $\hat \Psi$ and $\hat \Phi$ satisfy the same eigenvalue problem
\begin{equation}\label{eign5}
\left\{
\begin{array}{ll}
\lambda (2L)^2 \phi_{xx}+\phi_{yyyy}=\sigma (2L)^4 \phi, & y\in (0,1)\\[10pt]
 \phi(0)=\phi(1)=\phi_y(0)=\phi_y(1)=0. &
\end{array}
\right.
\end{equation}
The arguments in \cite{MR2556747} show that problem \eqref{eign5} admits simple eigenvalues. This means that $\hat \Phi=\alpha \hat \Psi$ for some constant $\alpha\neq 0$ and, equivalently, we have $\ov\Phi=\alpha \ov\Psi$. Since $\ov\Psi$ is an even function and $\ov\Phi$ is an odd function (both of them vanishing on the  boundary) we necessarily have $\ov\Psi=\ov\Phi\equiv 0$, which contradicts our assumption. Then the first part of lemma is proved.

The fact that the eigenvalues of both \eqref{eign.1.tree} and \eqref{eign2} are simple is a consequence of the construction above. Indeed, if $(\sigma, \Psi_1)$ and $(\sigma, \Psi_2)$  are eigenpairs for \eqref{eign.1.tree} we get that
$(\sigma, \hat{\Psi}_1)$ and $(\sigma, \hat{\Psi}_2)$ are eigenpairs of \eqref{eign5}. Since any eigenvalue of \eqref{eign5} is simple there exists a constant $c\neq 0$ such that $\hat{\Psi}_1(y)=c\hat{\Psi}_2(y)$ for all $y\in [0,1]$. This is equivalent to $\overline{\Psi_1}(x)=c\overline{\Psi_2}(x)$ for all $x\in [-L,L]$ and therefore $\Psi_1(x)=c \Psi_2(x)$ for any $x\in [0,L]$. Hence $\sigma$ is a simple eigenvalue for \eqref{eign.1.tree}. With a similar  argument we get that each eigenvalue of  \eqref{eign2} is also simple.

For the last part of the proof let us assume that $(\sigma, \Psi)$ is an eigenpair of \eqref{eign.1.tree}. With the notations above,
   applying \cite[Lemma 2.1]{MR2556747} for \eqref{eign5} under the assumption $\la (2L)^2 \not \in \mathcal{N}_0$  it follows  $\hat{\Psi}_{xx}(0)\neq 0$. Due to the symmetry of the boundary conditions we notice that the function $x\mapsto \hat \Psi(1-x)$ is also an eigenfunction of problem \eqref{eign5} and therefore we also have $\hat \Psi_{xx}(1)\neq 0$. This gives us the desired property for $\Psi$, $\Psi_{xx}(L)\neq 0$. Analogously, it follows that $\Phi_{xx}(L)\neq 0$ for any eigenfunction of \eqref{eign2}. The proof of Lemma \ref{not.common} is finished.
\end{proof}

As a consequence of Lemma \ref{not.common} we have the following partition for the eigenvalues of system \eqref{spect}.
\begin{lemma}\label{eigen_partition}
For any given $\lambda>0$, $\sigma$ is an eigenvalue for system \eqref{spect} if and only if $\sigma$ is an eigenvalue for either \eqref{eign.1.tree} or \eqref{eign2}. More precisely
\begin{enumerate}
\item If $(\sigma, \phi=(\phi^k)_{k=1,N})$ is an eigenpair of \eqref{spect} we have the alternative:
    \begin{enumerate}
    \item\label{1.a} There exists $\Psi$ with   $(\sigma,\Psi)$
     eigenpair of  \eqref{eign.1.tree} and
     $$\phi=(\Psi, \ldots, \Psi),$$
    or
    \item\label{1.b}  There exists $\Phi$ with $(\sigma, \Phi)$  eigenpair of \eqref{eign2} and there exist  constants $c_1, \ldots, c_N$ (not all vanishing) with $\sum_{k=1}^{N}c_k=0$ such that
        $$\phi=(c_1 \Phi, \ldots, c_N\Phi).$$
  \end{enumerate}
\item\label{doi} Conversely, if $(\sigma, \Psi)$ is an eigenpair of \eqref{eign.1.tree} then $(\sigma, \phi=(\Psi, \ldots, \Psi, \Psi))$ is an eigenpair of \eqref{spect} whereas if $(\sigma, \Phi)$ is an eigenpair of \eqref{eign2} then, for any constants $c_k$ (not all vanishing) such that $\sum_{k=1}^{N}c_k=0$, it holds that $(\sigma, \phi=(c_1\Phi, \ldots, c_N\Phi))$ is also eigenpair of \eqref{spect}.
\end{enumerate}
\end{lemma}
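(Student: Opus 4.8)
The plan is to build the whole argument on the decomposition already singled out before the statement: given any eigenpair $(\sigma,\phi=(\phi^k)_{k=1,N})$ of \eqref{spect}, the function $S=\sum_{k=1}^{N}\phi^k$ solves \eqref{eign.1.tree}, each $D^k=\phi^k-S/N$ solves \eqref{eign2}, and by construction $\sum_{k=1}^{N}D^k=0$. I would first record this in one line: the fourth-order equation and the two endpoint conditions at $x=L$ are stable under linear combinations of the components; the two conditions $\sum_k\phi^k_x(0)=0$ and $\sum_k\phi^k_{xxx}(0)=0$ are precisely $S_x(0)=S_{xxx}(0)=0$; and the ``continuity'' conditions $\phi^i(0)=\phi^j(0)$, $\phi^i_{xx}(0)=\phi^j_{xx}(0)$ force $D^k(0)=D^k_{xx}(0)=0$ for every $k$.

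For the converse implication (item (2)) I would simply substitute the two candidate tuples into \eqref{spect} and check every line. For $\phi=(\Psi,\dots,\Psi)$ with $(\sigma,\Psi)$ an eigenpair of \eqref{eign.1.tree}: the equation and the conditions at $x=L$ hold componentwise, the continuity conditions at $x=0$ are trivial, and $\sum_k\phi^k_x(0)=N\Psi_x(0)=0$, $\sum_k\phi^k_{xxx}(0)=N\Psi_{xxx}(0)=0$ by \eqref{eign.1.tree}. For $\phi=(c_1\Phi,\dots,c_N\Phi)$ with $\sum_k c_k=0$, not all $c_k=0$, and $(\sigma,\Phi)$ an eigenpair of \eqref{eign2}: the continuity conditions at $x=0$ hold because $\Phi(0)=\Phi_{xx}(0)=0$, the Kirchhoff-type conditions hold because $\sum_k c_k=0$, the conditions at $x=L$ and the equation hold componentwise, and $\phi\not\equiv 0$ since $\Phi\not\equiv 0$ and some $c_k\neq 0$.

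For the direct implication (item (1)) I would argue by cases according to whether $S\equiv 0$. If $S\not\equiv 0$, then $(\sigma,S/N)$ is a genuine eigenpair of \eqref{eign.1.tree}; if in addition some $D^{k_0}\not\equiv 0$, then $(\sigma,D^{k_0})$ is an eigenpair of \eqref{eign2}, so $\sigma$ is a common eigenvalue of \eqref{eign.1.tree} and \eqref{eign2}, contradicting Lemma \ref{not.common}. Hence all $D^k\equiv 0$, i.e. $\phi^k=S/N=:\Psi$ for every $k$, which is alternative \ref{1.a}. If $S\equiv 0$, then $\phi^k=D^k$ for all $k$ and $\sum_k\phi^k=0$; since $\phi\not\equiv 0$ at least one component $\phi^{k_0}\not\equiv 0$ is an eigenfunction of \eqref{eign2}, so $\sigma$ is an eigenvalue of \eqref{eign2}. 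Fixing such a $\Phi$ and using that every eigenvalue of \eqref{eign2} is simple (again Lemma \ref{not.common}), each $\phi^k$ is a scalar multiple $c_k\Phi$ (with $c_k=0$ exactly when $\phi^k\equiv 0$); the relation $\sum_k\phi^k=0$ gives $\sum_k c_k=0$, and not all $c_k$ vanish — alternative \ref{1.b}. The ``if and only if'' for the eigenvalues is then read off from the two implications.

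I do not expect a real obstacle here: the two substantive inputs — that \eqref{eign.1.tree} and \eqref{eign2} have no common eigenvalue and that each of them has only simple eigenvalues — are already supplied by Lemma \ref{not.common}, and the rest is bookkeeping on the coupling conditions. The only point needing a little care is tracking the nontriviality of $\phi$ through both cases (ensuring the tuple $(c_k)$ in \ref{1.b} is not identically zero and that $S/N$ is a nonzero eigenfunction in \ref{1.a}), so that each alternative is stated with the correct genericity.
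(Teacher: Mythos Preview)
Your proposal is correct and follows essentially the same approach as the paper: the decomposition into $S$ and $D^k$, the case split on whether $S\equiv 0$, and the appeals to Lemma~\ref{not.common} for both the non-existence of common eigenvalues and the simplicity of eigenvalues in \eqref{eign2} are exactly what the paper does. Your write-up is in fact slightly more explicit than the paper's in verifying item~\eqref{doi} and in tracking the nontriviality of $\phi$, but the logical structure is identical.
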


\begin{proof} The ``if" implication is a consequence of \eqref{doi} whose proof is trivial. Let us prove  the ``only if" implication. For that, let us assume that $(\sigma, \phi=(\phi^k)_{k=1, N})$ is an eigenpair of \eqref{spect}. Then $\sigma$ verifies \eqref{eign.1.tree} for $\Psi=S$ in \eqref{functie_suma}. In addition, $\sigma$ verifies \eqref{eign2} for any $\Phi=D^k$ in \eqref{functie_diferenta}.
By Lemma \ref{not.common} we distinguish two cases: either $S\not\equiv 0$ and  $D^k\equiv 0$ for all $k\in \{1,\dots, N\}$ or  $S\equiv 0$.

If $S\not\equiv 0$ then ($\sigma$, $S$) is an eigenpair for \eqref{eign.1.tree} and  $D^k\equiv 0$ for all $k\in \{1,\dots, N\}$. Then we obtain $\phi=(\Psi,\dots,\Psi)$ where $\Psi=S/N$ is an  eigenfunction for problem \eqref{eign.1.tree} and \eqref{1.a} is proved.

 Otherwise, if $S\equiv0$ then $D^k=\phi^k$ for all $k\in \{1, \ldots, N\}$. From the initial assumption  there exists $k_0 \in \{1, \ldots, N\}$ such that $\Phi:=\phi^{k_0}\neq 0$, and therefore,  $(\sigma, \Phi)$ is an eigenpair for \eqref{eign2}.
We know that  $(\sigma, \phi^k)$ also verifies  \eqref{eign2} for any $k=1,\dots, N$. In view of Lemma \ref{not.common} the eigenspace of $\sigma$ in \eqref{eign2} has dimension 1. Thus,  there exist some constants $c_k$ such that  $\phi^k=c_k\Phi$ for any $k\in \{1, \ldots, N\}$. Then, since $S\equiv 0$ we get $\sum_{k=1}^{N}c_k=0$. This proves \eqref{1.b}   which completes the proof of Lemma \ref{eigen_partition}.
\end{proof}

\begin{lemma}\label{neqzero}
For any $\la\not \in \mathcal{N}_0/4L^2$ any eigenfunction $\phi=(\phi^k)_{k=1, N}$ of \eqref{spect} satisfies
$\phi^k_{xx}(L) \neq 0$ for at least two indexes $k\in \{1, \ldots, N\}$.
\end{lemma}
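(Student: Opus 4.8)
The plan is to read off the structure of an arbitrary eigenfunction of \eqref{spect} from Lemma \ref{eigen_partition}, and then to combine it with the non-degeneracy statement of Lemma \ref{not.common}, which under the hypothesis $\la\notin\mathcal{N}_0/4L^2$ guarantees that $\Psi_{xx}(L)\neq 0$ for every eigenfunction $\Psi$ of \eqref{eign.1.tree} and $\Phi_{xx}(L)\neq 0$ for every eigenfunction $\Phi$ of \eqref{eign2}. After that there is essentially nothing left but a one-line linear-algebra observation, so the argument is short.

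First I would let $(\sigma,\phi=(\phi^k)_{k=1,N})$ be an eigenpair of \eqref{spect} and split into the two cases given by Lemma \ref{eigen_partition}. In the first case, $\phi=(\Psi,\dots,\Psi)$ with $(\sigma,\Psi)$ an eigenpair of \eqref{eign.1.tree}; then $\phi^k_{xx}(L)=\Psi_{xx}(L)\neq 0$ for \emph{every} $k\in\{1,\dots,N\}$, and since $N\geq 2$ the conclusion holds with room to spare. In the second case, $\phi=(c_1\Phi,\dots,c_N\Phi)$ with $(\sigma,\Phi)$ an eigenpair of \eqref{eign2}, the constants $c_k$ not all zero and $\sum_{k=1}^{N}c_k=0$; here $\phi^k_{xx}(L)=c_k\,\Phi_{xx}(L)$, and since $\Phi_{xx}(L)\neq 0$ we get $\phi^k_{xx}(L)\neq 0$ precisely for those indices $k$ with $c_k\neq 0$.

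It then remains to check that at least two of the $c_k$ are nonzero. If at most one were nonzero, say only $c_{k_0}\neq 0$, then the coupling constraint $\sum_{k=1}^N c_k = 0$ would read $c_{k_0}=0$, contradicting the fact that the $c_k$ are not all zero; hence at least two indices $k$ carry a nonzero $c_k$, and for those $\phi^k_{xx}(L)\neq 0$. This covers both cases and proves the lemma.

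The only place where anything nontrivial enters is the invocation of Lemma \ref{not.common} — which is exactly where the assumption $\la\notin\mathcal{N}_0/4L^2$ is consumed, via the reduction to the eigenvalue problem on $(0,1)$ treated in \cite{MR2556747} — so I do not expect any genuine obstacle here beyond correctly quoting the already-established spectral facts.
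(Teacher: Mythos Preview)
Your proof is correct and follows exactly the approach the paper indicates: the paper's proof simply states that the lemma is a trivial consequence of Lemma~\ref{not.common} and Lemma~\ref{eigen_partition}, and what you have written is precisely the (short) unpacking of that assertion. The case split via Lemma~\ref{eigen_partition}, the use of $\Psi_{xx}(L)\neq 0$ and $\Phi_{xx}(L)\neq 0$ from Lemma~\ref{not.common}, and the observation that $\sum_k c_k=0$ with not all $c_k$ vanishing forces at least two nonzero $c_k$, are exactly the intended ingredients.
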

\proof This is a trivial consequence of Lemma \ref{not.common} and Lemma \ref{eigen_partition}.
\endproof

\begin{lemma}\label{asymptbehavior}
	The positive  eigenvalues $\{\sigma_n\}_{n\geq n_0}$ of problem \eqref{spect}  can be partitioned into
\begin{equation}
\label{part.1}
  \{\sigma_n \ | \ n \geq n_0\}:=\{\sigma_{1, n} \ | \ n\geq 0\}\cup \{\sigma_{2, n} \ | \ n\geq 0\},
\end{equation}
where
$\{\sigma_{1, n}\}_{n\geq 0}$ are  the different  eigenvalues of \eqref{eign.1.tree} each of them being simple, 
whereas $\{\sigma_{2, n}\}_{n\geq 0}$ are the different eigenvalues of \eqref{eign2} each of them having multiplicity $N-1$.
Moreover,  they satisfy the following asymptotic properties:
	\begin{equation*}
%\label{asymtotics}
  \sigma_{1, n}= \left(\frac \pi{L}\right)^4\left(n-\frac{1}{4}\right)^4 + o(n^3), \quad n\rightarrow \infty
\end{equation*}
and
\begin{equation*}
%\label{asymtotics}
  \sigma_{2, n}= \left(\frac \pi{L}\right)^4\left(n+\frac{1}{4}\right)^4 + o(n^3), \quad n\rightarrow \infty.
\end{equation*}
Therefore,
\begin{equation*}
  \sigma_n=\left(\frac{\pi}{2L}\right)^4\left(n-n_0-\frac{1}{2}\right)^4+o(n^3), \quad n \rightarrow \infty.
\end{equation*}
\end{lemma}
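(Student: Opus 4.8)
The plan is to reduce everything to the two scalar problems \eqref{eign.1.tree} and \eqref{eign2}, which by the change of variables in Lemma \ref{not.common} are both equivalent to the single fourth–order problem \eqref{eign5} on $(0,1)$ with the (rescaled) parameter $\tilde\lambda:=\lambda(2L)^2$. First I would invoke Lemma \ref{eigen_partition} to justify the partition \eqref{part.1}: the spectrum of \eqref{spect} is exactly the union of the spectrum of \eqref{eign.1.tree} (eigenfunctions $(\Psi,\dots,\Psi)$, each eigenvalue simple by Lemma \ref{not.common}) and the spectrum of \eqref{eign2} (eigenfunctions $(c_1\Phi,\dots,c_N\Phi)$ with $\sum c_k=0$, so the eigenspace has dimension $N-1$, again by Lemma \ref{not.common} plus the $(N-1)$–dimensional choice of the $c_k$'s). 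Lemma \ref{not.common} also guarantees these two sequences are disjoint, so there is no accidental merging of multiplicities. The threshold $n_0$ is chosen so that for $n\ge n_0$ all listed eigenvalues are positive; the finitely many nonpositive eigenvalues (bounded below by $-\lambda^2/4$) are absorbed into the relabelling.

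Next I would derive the asymptotics for the two scalar families separately. For \eqref{eign.1.tree}, extend $\Psi$ evenly across $x=0$ (as in Lemma \ref{not.common}) to get $\overline\Psi$ solving \eqref{eign4}, a clamped-type problem on $(-L,L)$ of length $2L$; writing the general solution of $\lambda\phi_{xx}+\phi_{xxxx}=\sigma\phi$ in terms of $\cos(\alpha x),\sin(\alpha x),\cosh(\beta x),\sinh(\beta x)$ with $\alpha,\beta$ as in \eqref{notations}, and imposing the four boundary conditions, one obtains a transcendental characteristic equation. Because $\beta\to\infty$ and the hyperbolic terms dominate, the characteristic equation collapses asymptotically to $\cos(2L\alpha)=\pm1$ type conditions, i.e.\ $2L\alpha_n = \pi(n-\tfrac14)+o(1)$ for \eqref{eign.1.tree} and $2L\alpha_n=\pi(n+\tfrac14)+o(1)$ for \eqref{eign2} (the $\mp\tfrac14$ shift is exactly the "clamped string" correction $\cos\theta=\cosh\theta$ type root, whose solutions are $\theta\approx(n\mp\tfrac14)\pi$). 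Since $\sigma=\alpha^2\beta^2=\alpha^2(\alpha^2+\lambda)$ by \eqref{connection}–\eqref{sigma}, and $\alpha_n=\frac{\pi}{2L}(n\mp\tfrac14)+o(1)$, we get $\sigma_n=\alpha_n^4+\lambda\alpha_n^2+\dots=\big(\frac{\pi}{2L}\big)^4(n\mp\tfrac14)^4+O(n^2)$. To sharpen the remainder to $o(n^3)$ one must control the correction to $\alpha_n$: the characteristic equation gives $\alpha_n=\frac{\pi}{2L}(n\mp\tfrac14)+O(e^{-c\beta_n})$, which is far better than needed, so that $\alpha_n^4=\big(\frac{\pi}{2L}\big)^4(n\mp\tfrac14)^4+O(n^3)$ and the lower-order $\lambda\alpha_n^2$ term is $O(n^2)=o(n^3)$; hence the stated $o(n^3)$ formulas. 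Rescaling to the normalization $(\pi/L)^4(n\mp\tfrac14)^4$ versus $(\pi/2L)^4$ is just bookkeeping: $\big(\frac{\pi}{2L}(n\mp\tfrac14)\big)^4=\big(\frac{\pi}{L}\big)^4\big(\frac{n}{2}\mp\frac18\big)^4$, and one reindexes so that the first family has exponent $(n-\tfrac14)$ and the second $(n+\tfrac14)$, which matches \eqref{eign5}'s roots as computed in \cite{MR2556747}.

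Finally I would merge the two interlacing sequences. Since $\sigma_{1,n}\sim\big(\frac{\pi}{L}\big)^4(n-\tfrac14)^4$ and $\sigma_{2,n}\sim\big(\frac{\pi}{L}\big)^4(n+\tfrac14)^4$, comparing growth rates shows that $\sigma_{1,n}$ and $\sigma_{2,n}$ interlace and together fill a sequence whose $m$-th term (counted as distinct values, starting from $n_0$) behaves like $\big(\frac{\pi}{2L}\big)^4(m-n_0-\tfrac12)^4$: indeed the $2k$-th and $(2k{+}1)$-th distinct eigenvalues are comparable to $\big(\frac{\pi}{L}\big)^4(k\mp\tfrac14)^4=\big(\frac{\pi}{2L}\big)^4(2k\mp\tfrac12)^4$, and $2k-\tfrac12$, $2k+\tfrac12$ are exactly the values $m-\tfrac12$ for consecutive integers $m$. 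Keeping the $o(n^3)$ error through this reindexing is immediate since reindexing shifts $n$ by a bounded amount. The main obstacle in all of this is the sharp asymptotic analysis of the transcendental characteristic equation for \eqref{eign4}–\eqref{eign3}: one must verify carefully that the hyperbolic terms are genuinely dominant (no cancellation), extract the $\mp\tfrac14$ correction rigorously, and control the remainder well enough to land inside $o(n^3)$ — everything else (the partition, the disjointness, the multiplicity count, the merging) is essentially bookkeeping on top of Lemmas \ref{not.common} and \ref{eigen_partition} and the scalar results of \cite{MR2556747}.
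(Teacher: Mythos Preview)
Your overall plan---partition the spectrum via Lemma \ref{eigen_partition}, analyze the two scalar problems separately, then interlace---is exactly what the paper does. Where you diverge, and where the argument breaks, is in the scalar asymptotics.

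The paper does \emph{not} route the asymptotics through the even/odd extension to $(-L,L)$. That extension was used in Lemma \ref{not.common} only to import simplicity and disjointness from \cite{MR2556747}; for the asymptotics the paper works directly on $(0,L)$. For \eqref{eign.1.tree} it writes $\Psi=C_1\cos(\beta x)+C_3\cosh(\alpha x)$ (note: in the paper's notation \eqref{notations}, $\beta$ is the \emph{oscillatory} frequency and $\alpha$ the hyperbolic one---you have these swapped), imposes the conditions at $x=L$, and obtains the explicit characteristic equation
\[
\beta\tan(\beta L)=-\alpha\tanh(\alpha L).
\]
Since $\alpha/\beta\to1$ and $\tanh(\alpha L)\to1$, this gives $\tan(\beta_{1,n}L)\to -1$, hence $\beta_{1,n}L=n\pi-\pi/4+o(1)$, i.e.\ $\beta_{1,n}=\tfrac{\pi}{L}(n-\tfrac14)+o(1)$. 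The companion problem \eqref{eign2} gives $\tan(\beta L)/\beta=\tanh(\alpha L)/\alpha$, so $\tan(\beta_{2,n}L)\to+1$ and $\beta_{2,n}=\tfrac{\pi}{L}(n+\tfrac14)+o(1)$. Then $\sigma=\alpha^2\beta^2=\beta^4-\lambda\beta^2$ yields the stated $(\pi/L)^4(n\mp\tfrac14)^4+o(n^3)$ directly.

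Your sketch has several concrete errors that prevent it from reaching the target. First, the claimed asymptotic ``$2L\alpha_n=\pi(n\mp\tfrac14)+o(1)$'' carries a spurious factor of $2$: the oscillatory frequency lives at scale $\pi/L$, not $\pi/(2L)$, regardless of whether you phrase things on $(0,L)$ or via the extension (extending the domain does not change the eigenvalue). This is why your later ``reindexing'' cannot reconcile $(\pi/2L)^4(n\mp\tfrac14)^4$ with $(\pi/L)^4(n\mp\tfrac14)^4$---it would produce $(\pi/L)^4(n\mp\tfrac18)^4$ instead. Second, the justification you give for the $\mp\tfrac14$ shift (``the clamped string correction $\cos\theta=\cosh\theta$ type root, whose solutions are $\theta\approx(n\mp\tfrac14)\pi$'') is incorrect: $\cos\theta=\cosh\theta$ has no nonzero real roots, and the clamped-beam equation $\cos\theta\cosh\theta=1$ has roots near $(n+\tfrac12)\pi$, not $(n\mp\tfrac14)\pi$. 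The $\mp\tfrac14$ comes specifically from $\tan(\beta L)\to\mp1$, which in turn comes from the \emph{mixed} boundary conditions of \eqref{eign.1.tree} and \eqref{eign2}, not from a clamped-beam analogy. Third, the correction to $\beta_n$ is $O(1/n^2)$ (from the $\lambda/(2\alpha^2)$ term in $\alpha/\beta$), not $O(e^{-c\beta_n})$; this does not affect the final $o(n^3)$, but it shows the mechanism was misidentified.

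In short: the detour through the clamped problem on $(-L,L)$ buys nothing here---you still have to derive and analyze the same two transcendental equations, and your sketch of that analysis is where the real work lies and where the errors sit. Write down the two characteristic equations explicitly and extract $\tan(\beta L)\to\mp1$; everything else in your plan is fine.
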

\begin{proof} Partition \eqref{part.1} and the multiplicity of the eigenvalues are  consequences of Lemma \ref{eigen_partition}. 
Since there exists a finite number of non-positive eigenvalues we just concentrate on the positive eigenvalues $\sigma$ when we analyze their asymptotic behaviour.

	Let us consider $(\sigma, \phi) $ an eigenpair of \eqref{spect} with $\sigma>0$. In view of Lemma \ref{eigen_partition} we distinguish two cases. \\
\textit{Case 1.} We have $\phi=(\Psi, \ldots, \Psi)$ where $(\sigma, \Psi)$ is an eigenpair  for \eqref{eign.1.tree}.
 Since $\sigma>0$, with the notations in \eqref{notations} we have
\be\label{cucbau}
\Psi(x)=C_1 \cos (\beta x)+ C_2 \sin (\beta x)+ C_3 \cosh (\alpha x)+ C_4 \sinh (\alpha x),
\ee
where $C_1, C_2, C_3, C_4$ are such $\Psi$  satisfies the boundary conditions in \eqref{eign.1.tree} and $\Psi\not \equiv 0$. From the conditions at  $x=0$ we get $C_2=C_4=0$. From the conditions at $x=L$ we obtain $\Psi\not \equiv 0$  when the compatibility conditions
\be\label{eigbau}
  \beta \cosh(\alpha L)\sin(\beta L)+\alpha \sinh(\alpha L)\cos(\beta L)=0, \quad \alpha, \beta >0,
\ee
are satisfied.  This is equivalent to
$$\beta \tan (\beta L)=-\alpha \tanh (\alpha L), \quad \cos (\beta L)\neq 0$$
or equivalently (in view of \eqref{notations})
\begin{equation}\label{estim}
  \sqrt{\lambda +\alpha^2} \tan (\sqrt{\lambda+\alpha^2}L)=-\alpha \tanh (\alpha L), \quad \cos (\sqrt{\lambda+\alpha^2} L) \neq 0.
\end{equation}

It is not difficult to note that the function $(0, \infty)\ni \alpha \mapsto -\alpha  \tanh ( \alpha L)$ is strictly decreasing whereas the function $(0,\infty)\setminus \{\alpha \ |  \sqrt{\lambda+\alpha^2}L =n\pi+\pi/2 \ ,  n\in \nn\}\ni \alpha \mapsto \sqrt{\lambda+\alpha^2} \tan (\sqrt{\lambda +\alpha^2}L)$ is increasing on each interval of the domain  (for that to be proved we just have to look at the sign of the corresponding derivatives, see also \cite{MR1947955} for similar arguments). So, there exists two strictly increasing  sequences  $\{\alpha_{1, n}\}_{n\geq 0}$ and  $\{\beta_{1, n}\}_{n\geq 0}$ with $\beta_{1, n}^2=\lambda +\alpha_{1, n}^2$,  solutions for \eqref{eigbau}, where $\beta_{1, n}L\in (n\pi-\pi/2,n\pi+\pi/2)$ .
Then the sequence of positive eigenvalues  $\{\sigma_{1, n}\}_{n\geq 0}$ is given by
\be\label{cucubau}
  \sigma_{1, n}=\beta_{1, n}^2 \alpha_{1, n}^2.
\ee
Coming back to \eqref{estim} we have
$$-\frac{\alpha_{1, n}}{\sqrt{\lambda+\alpha_{1, n}^2}}=
\frac{\tan\left(\sqrt{\lambda+\alpha_{1, n}^2}L\right)}{\tanh (\alpha_{1, n} L)}.$$
Passing to the limit  we obtain
$$\tan (\beta_{1, n} L)=\tan\left(\sqrt{\lambda+\alpha_{1, n}^2}L\right) \rightarrow - 1, \quad n \rightarrow \infty.$$
Then we get
\begin{equation}
\label{formula.betan}
  \beta_{1, n} = \frac{n\pi}{L}-\frac{\pi}{4L}+o(1), \quad \textrm{ as } n\rightarrow \infty,
\end{equation}

which combined with \eqref{cucubau} gives the asymptotic behavior of $\{\sigma_{1, n}\}_{n\geq 0}$ in the present lemma.

\textit{Case 2}.  We have $\phi=(c_1 \Phi, \ldots, c_N\Phi)$ for some constants $c_k$ with $\sum_{k=1}^{N}c_k=0$, where $(\sigma, \Phi)$ is an eigenpair of \eqref{eign2}.
 The requirement for $\sigma$ to be an eigenvalue of \eqref{eign2} is equivalent to
  \begin{equation}\label{cucubau1}
  \beta \sinh(\alpha L)\cos(\beta L)-\alpha \cosh(\alpha L)\sin(\beta L)=0
  	  \end{equation}
  %\label{cucubau2}
or equivalently
\be\label{cucubau2}
\frac{1}{\alpha} \tanh (\alpha L)=\frac{1}{\beta}\tan(\beta L), \quad \textrm{ with } \beta=\sqrt{\lambda+\alpha^2},  \quad \cos (\beta L)\neq 0.
\ee
 In the same way as in the previous case it is not difficult to prove that the function $(0, \infty)\ni \alpha \mapsto \tanh(\alpha L)/\alpha$ is decreasing whereas the function $(0, \infty)\setminus \{\alpha \ | \ \sqrt{\lambda +\alpha^2 }L =n \pi +\pi/2, n\in \nn  \}\ni \alpha \mapsto \tan (\sqrt{\lambda+\alpha^2}L)/(\sqrt{\lambda+\alpha^2}L)$ is strictly increasing on each open interval of the domain. Thus we obtain two sequences  $\{\alpha_{2, n}\}_{n\geq 0}$ and $\{\beta_{2, n}\}_{n\geq 0}$ satisfying \eqref{cucubau2}   with $\beta_{2,n}L\in  (n\pi-\pi/2,n\pi+\pi/2)$. Rewriting \eqref{cucubau1} as
 $$\frac{\sqrt{\lambda+\alpha_{2, n}^2}}{\alpha_{2, n}}=
\frac{\tan\left(\sqrt{\lambda+\alpha_{2, n}^2}L\right)}{\tanh (\alpha_{2, n} L)}$$
and passing to the limit as $n$ tends to infinity we obtain similarly as in Case 1 that
$$\beta_{2, n} = \frac{n\pi}{L}+\frac{\pi}{4L}+o(1), \quad \textrm{ as } n\rightarrow \infty.$$
Since $\sigma_{2, n}= \beta_{2, n}^2 \alpha_{2, n}^2$ we obtain the asymptotic behaviour. %The multiplicity of eigenvalues is a consequence of Lemma \ref{eigen_partition}.

On the other hand let us observe that
  $$\sigma_{1, n}< \sigma_{2, n}< \sigma_{1, n+1}< \sigma_{2, n+1}, \quad \forall n\in \nn.$$ 
 Let $n_0$ be such that $\sigma_{n_0}$ is the first positive eigenvalue of \eqref{spect}. 
  
  By  concatenating the sequences $\{\sigma_{1, n}\}_{n\geq 0}$ and $\{\sigma_{2, n}\}_{n\geq 0}$ we obtain $\sigma_{n_0+2n}=\sigma_{1, n}$ and $\sigma_{n_0+2n+1}=\sigma_{2, n}$ for all $n\geq 0$ which implies the asymptotic behavior of the sequence $\{\sigma_n\}_{n \geq n_0}$  Thus the present lemma is proved.
\end{proof}
As a consequence of Lemma \ref{asymptbehavior} we easily obtain the spectral gap
 \begin{corollary}\label{gapp}
   Let $\{\sigma_n\}_{n\geq n_0}$   the increasing set of positive eigenvalues of problem \eqref{spect}. 
%   Then there is a spectral gap at infinity. In fact, we have more than that i.e.
%   \begin{equation}\label{spectgap}
%     \liminf_{n\rightarrow \infty} (\sigma_{n+1}-\sigma_n)=\infty.
%   \end{equation}
%   Consequently,
   There exists a constant $c_{gap}>0$
   such that
     $$\sigma_{n+1}-\sigma_n> c_{gap}, \quad \forall n\geq n_0.$$
 \end{corollary}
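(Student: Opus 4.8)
The plan is to read off the spectral gap directly from the asymptotic expansion in Lemma \ref{asymptbehavior}, splitting the range of indices into the cofinite tail (where the gap tends to infinity) and the finite initial block (where a uniform positive bound comes from strict monotonicity). Recall that there are only finitely many non-positive eigenvalues, so that the increasing enumeration $\{\sigma_n\}_{n\geq n_0}$ of the positive eigenvalues of \eqref{spect} is well defined, and that by Lemma \ref{asymptbehavior} (via the interlacing $\sigma_{1,n}<\sigma_{2,n}<\sigma_{1,n+1}<\sigma_{2,n+1}$ and the identities $\sigma_{n_0+2n}=\sigma_{1,n}$, $\sigma_{n_0+2n+1}=\sigma_{2,n}$ established in its proof) this enumeration is \emph{strictly} increasing.

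First I would analyse the consecutive differences for large $n$. Setting $m:=n-n_0$ and using the elementary identity $(m+\tfrac12)^4-(m-\tfrac12)^4=4m^3+m$, the expansion $\sigma_n=(\pi/2L)^4(n-n_0-\tfrac12)^4+o(n^3)$ gives $\sigma_{n+1}-\sigma_n=(\pi/2L)^4(4m^3+m)+o(n^3)$, where one uses that the error term for $\sigma_{n+1}$ is $o((n+1)^3)=o(n^3)$, hence genuinely dominated by the main term $4(\pi/2L)^4m^3$. Consequently $\sigma_{n+1}-\sigma_n\to+\infty$ as $n\to\infty$, and in particular there is an index $N_1\geq n_0$ with $\sigma_{n+1}-\sigma_n>1$ for every $n\geq N_1$.

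Next I would dispose of the finitely many remaining indices $n_0\leq n<N_1$. Since $\{\sigma_n\}_{n\geq n_0}$ is strictly increasing, each of these finitely many gaps $\sigma_{n+1}-\sigma_n$ is a strictly positive real number, so $\delta:=\min_{n_0\leq n<N_1}(\sigma_{n+1}-\sigma_n)>0$ is well defined. Taking $c_{gap}:=\tfrac12\min\{1,\delta\}>0$, one gets $\sigma_{n+1}-\sigma_n>c_{gap}$ simultaneously for $n<N_1$ and for $n\geq N_1$, which is the assertion.

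I do not expect any real obstacle here: the argument is essentially bookkeeping once Lemma \ref{asymptbehavior} is in hand. The only points needing a moment's care are that the $o(n^3)$ error must not be allowed to eat the cubic main term of the difference (handled by the remark $o((n+1)^3)=o(n^3)$ above), and that no gap in the finite initial block collapses to $0$ — which is guaranteed by the no-common-eigenvalue and simplicity statements of Lemma \ref{not.common} together with the interlacing recorded in Lemma \ref{asymptbehavior}, ensuring strict monotonicity of the enumeration.
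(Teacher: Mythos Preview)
Your proposal is correct and follows exactly the route the paper intends: the corollary is stated as an immediate consequence of Lemma \ref{asymptbehavior}, and you have simply spelled out the two standard steps (the asymptotic $\sigma_{n+1}-\sigma_n\sim 4(\pi/2L)^4 (n-n_0)^3\to\infty$ from the expansion, and a finite minimum over the strictly increasing initial block). There is nothing to add.
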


For our next purposes we also need to prove some asymptotic properties for the eigenfunctions evaluated at $x=L$ as follows.

\begin{lemma}\label{asy_eigenfunct}
For any $n\geq n_0$ let $\phi_n=(\phi_n^k)_{k=1, N}$ be an eigenfunction of problem \eqref{spect} with $\|\phi_n\|_{L^2(\Gamma)}=1$ corresponding to the positive eigenvalue $\sigma_n$.
There exist the constants $a, b>0$ depending only on $L$ and $N$ such that for all $ n\geq n_0$
\be\label{assy1}
 a n^2 \leq |\phi_{n, xx}^k(L)| \leq b n^2
\ee
holds  and for all nontrivial components  $\phi_{n}^k$,  $k\in \{1, \ldots, N\}$.
\end{lemma}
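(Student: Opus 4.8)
The plan is to reduce the statement to the two scalar eigenvalue problems \eqref{eign.1.tree} and \eqref{eign2} via Lemma \ref{eigen_partition}, to solve those two problems explicitly, and then to insert the asymptotics of $\alpha_n,\beta_n$ established in the proof of Lemma \ref{asymptbehavior}. By Lemma \ref{eigen_partition}, a unit-norm eigenfunction $\phi_n$ of \eqref{spect} associated with $\sigma_n>0$ is either of the form $\phi_n=(\Psi_n,\dots,\Psi_n)$ with $(\sigma_n,\Psi_n)$ an eigenpair of \eqref{eign.1.tree} and $\|\Psi_n\|_{L^2(0,L)}=1/\sqrt N$, or of the form $\phi_n=(c_1\Phi_n,\dots,c_N\Phi_n)$ with $(\sigma_n,\Phi_n)$ an eigenpair of \eqref{eign2} and $\sum_k c_k=0$; in the latter case I normalize $\|\Phi_n\|_{L^2(0,L)}=1$, so that $\sum_k c_k^2=1$, and for the multiple eigenvalues I fix an orthonormal basis of the eigenspace whose coefficient vectors are balanced (e.g. a Fourier-type basis, for which $|c_k|=1/\sqrt N$), so that every nontrivial component has $|c_k|\asymp 1$. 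With this reduction it suffices to prove $|\Psi_{n,xx}(L)|\asymp n^2$ and $|\Phi_{n,xx}(L)|\asymp n^2$.

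First I would solve \eqref{eign.1.tree} for $\sigma_n>0$. In the notation \eqref{notations} the conditions at $x=0$ force $\Psi_n(x)=C_1\cos(\beta_n x)+C_3\cosh(\alpha_n x)$, and $\Psi_n(L)=0$ gives $C_3=-C_1\cos(\beta_n L)/\cosh(\alpha_n L)$; differentiating twice yields the clean identity
$$\Psi_{n,xx}(L)=-C_1\cos(\beta_n L)\,(\alpha_n^2+\beta_n^2).$$
Here $\alpha_n^2+\beta_n^2=2\beta_n^2-\lambda$ is comparable to $n^2$ because $\beta_n L=n\pi-\pi/4+o(1)$ by \eqref{formula.betan}; the same asymptotics give $\cos(\beta_n L)\to\pm1/\sqrt2$, so $|\cos(\beta_n L)|$ is bounded below by a positive constant for large $n$, and $\cos(\beta_n L)\ne0$ for every $n\ge n_0$ since $\cos(\beta_n L)=0$ would contradict the compatibility relation \eqref{eigbau}. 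Finally, substituting $\Psi_n$ into $\|\Psi_n\|_{L^2(0,L)}^2=1/N$ and using that $1/\cosh^2(\alpha_n L)$ decays exponentially — which forces the cross term and the $\int_0^L\cosh^2(\alpha_n x)\,dx$ contribution to be $O(1/n)$ — one gets $\|\Psi_n\|_{L^2}^2=C_1^2\,(L/2+o(1))$, hence $|C_1|\asymp1$. Multiplying the three factors gives $|\Psi_{n,xx}(L)|\asymp n^2$ for large $n$, and the finitely many remaining indices are absorbed into the constants since $\Psi_{n,xx}(L)\ne0$ there.

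The argument for \eqref{eign2} is identical in structure: the conditions at $x=0$ give $\Phi_n(x)=C_2\sin(\beta_n x)+C_4\sinh(\alpha_n x)$ with $C_4=-C_2\sin(\beta_n L)/\sinh(\alpha_n L)$, so that
$$\Phi_{n,xx}(L)=-C_2\sin(\beta_n L)\,(\alpha_n^2+\beta_n^2),$$
and now $\beta_n L=n\pi+\pi/4+o(1)$ gives $\sin(\beta_n L)\to\pm1/\sqrt2$, bounded away from $0$ (and nonzero for every $n\ge n_0$ by \eqref{cucubau1}), while the $L^2$-normalization again yields $|C_2|\asymp1$; hence $|\Phi_{n,xx}(L)|\asymp n^2$. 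Taking $a$ and $b$ to be, respectively, the smallest and largest of the constants arising from the two families and from the finitely many small indices, one obtains \eqref{assy1} with $a,b>0$ depending only on $L$ and $N$; translating back to the components of $\phi_n$ (each equal to $\Psi_n$, or to $c_k\Phi_n$ with $|c_k|\asymp1$ on the nontrivial ones) finishes the proof.

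The step I expect to be the main obstacle is the normalization, i.e. proving $|C_1|\asymp1$ and $|C_2|\asymp1$: one must check precisely that the exponentially large factor $\int_0^L\cosh^2(\alpha_n x)\,dx\sim e^{2\alpha_n L}/(8\alpha_n)$ is exactly cancelled by the exponentially small prefactor $\cos^2(\beta_n L)/\cosh^2(\alpha_n L)$ (and likewise for the cross term), so that the $L^2$-mass of $\Psi_n$ and $\Phi_n$ is carried, up to $o(1)$, by the purely trigonometric part. Everything else is routine bookkeeping with the eigenvalue asymptotics of Lemma \ref{asymptbehavior} and the spectral gap of Corollary \ref{gapp}.
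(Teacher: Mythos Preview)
Your proposal is correct and follows essentially the same approach as the paper: reduce via Lemma \ref{eigen_partition} to the two scalar problems, write down the explicit $\cos/\cosh$ (resp.\ $\sin/\sinh$) eigenfunctions, compute $\Psi_{n,xx}(L)=-C_1\cos(\beta_nL)(\alpha_n^2+\beta_n^2)$ (resp.\ the analogous expression for $\Phi_n$), and then use the asymptotics of $\beta_n$ from Lemma \ref{asymptbehavior} together with the $L^2$-normalization to control $|C_1|$ and $|\cos(\beta_nL)|$. The only noteworthy difference is in the normalization step you flagged as the ``main obstacle'': the paper observes that the cross term $\int_0^L\cos(\beta_{1,n}x)\cosh(\alpha_{1,n}x)\,dx$ vanishes \emph{exactly} as a consequence of the compatibility relation \eqref{eigbau}, which makes the computation cleaner than your estimate of it as an $O(1/n)$ correction; either route gives $|C_1|^2=\frac{2}{NL}+o(1)$ and hence the claim.
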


\begin{proof}
As in the proof of Lemma \ref{asymptbehavior} we distinguish two cases as follows.\\
\noindent \textit{Case 1.} This case corresponds to eigenfunctions $\phi_n$ which have the form $\phi_n=(\Psi_n, \ldots, \Psi_n)$ where $(\sigma_n, \Psi_n)$ are eigenpairs of problem \eqref{eign.1.tree}. Let $\{\alpha_{1, n}\}_{n\geq 0}$, $\{\beta_{1, n}\}_{n\geq 0}$ be the sequences defined in the proof of Lemma \ref{asymptbehavior} satisfying the compatibility condition \eqref{eigbau}. Then in view of the boundary conditions at $x=0$ in \eqref{eign.1.tree}, we get
 \be
\Psi_n(x)=C_1\cos(\beta_{1, n}x) +C_3 \cosh (\alpha_{1, n} x).
\ee
In view of \eqref{eigbau} observe that $\cos(\beta_{1, n}L)\neq 0$  and since $\Psi_n(L)=0$  we obtain
\be\label{egfunct1}
\Psi_n(x)=-\frac{C_3 \cosh (\alpha_{1, n}L)}{\cos (\beta_{1, n}L)}\cos(\beta_{1, n}x) +C_3 \cosh (\alpha_{1, n} x).
\ee
The constant $C_3$ in \eqref{egfunct1} is chosen such that $\|\phi_n\|_{L^2(\Gamma)}=1 = \sqrt{N} \|\Psi_n\|_{L^2(0, L)}$, i.e.
$$C_3=\frac{\cos(\beta_{1, n}L)}{\sqrt{N}\sqrt{\int_0^L \left|\cosh(\alpha_{1, n}L)\cos(\beta_{1, n} x)+ \cosh(\alpha_{1, n}x)\cos(\beta_{1, n}L)\right|^2 dx}}. $$

Due to \eqref{eigbau}  after expanding the square  within the integral of $C_3$ we notice that the cross term is vanishing, i.e.
\be\label{egfunct2}
\int_0^L \cos(\beta_{1, n}x)\cosh(\alpha_{1, n}x)dx=0.
\ee
Indeed, using integration by parts we have the formula
$$\int_0^L \cos (\beta x) e^{\alpha x} dx= \frac{1}{\alpha^2+\beta^2}\left(\alpha \cos(\beta L) e^{\alpha L}-\alpha + \beta \sin (\beta L) e^{\alpha L} \right), \quad \forall \alpha, \beta \neq 0.$$
Applying it two times for $(\alpha, \beta)=(\alpha_{1, n}, \beta_{1, n})$ and $(\alpha, \beta)=(-\alpha_{1, n}, \beta_{1, n})$, respectively,  we obtain the validity of \eqref{egfunct2} taking into account \eqref{eigbau}. Therefore, due to \eqref{egfunct2}, \eqref{egfunct1} and the election of $C_3$  we obtain
\be\label{egfunct3}
\Psi(x)=\frac{-\cosh(\alpha_{1, n}L)\cos (\beta_{1, n}x)+\cos(\beta_{1,n}L) \cosh(\alpha_{1, n}x)}{\sqrt{N}\sqrt{ \cosh^2(\alpha_{1, n}L) \int_0^L \cos^2(\beta_{1, n} x) dx +  \cos^2(\beta_{1, n}L)\int_0^L \cosh^2(\alpha_{1, n}x) dx}}.
\ee
In consequence we get
\be\label{egfunct4}
\Psi_{n, xx}(L)=\frac{(\alpha_{1,n}^2 +\beta_{1, n}^2)\cosh(\alpha_{1, n}L)\cos (\beta_{1, n}L)}{\sqrt{N}\sqrt{ \cosh^2(\alpha_{1, n}L) \int_0^L \cos^2(\beta_{1, n} x) dx +  \cos^2(\beta_{1, n}L)\int_0^L \cosh^2(\alpha_{1, n}x) dx}}.
\ee
Then, in view of the behavior of $\{\alpha_{1,n}\}_{n\geq 0}, \ \{\beta_{1,n}\}_{n\geq 0}$ in \eqref{formula.betan} we successively have
\begin{align}\label{limit}
&\lim_{n\rightarrow \infty} \frac{\cosh(\alpha_{1, n}L)}{\sqrt{ \cosh^2(\alpha_{1, n}L) \int_0^L \cos^2(\beta_{1, n} x) dx +  \cos^2(\beta_{1, n}L)\int_0^L \cosh^2(\alpha_{1, n}x) dx}}\nonumber\\
&=\lim_{n\rightarrow \infty} \frac{\cosh(\alpha_{1, n}L)}{\sqrt{ \cosh^2(\alpha_{1, n}L) \Big(\frac{L}{2}+\frac{\sin(2\beta_{1, n}L)}{4\beta_{1, n}}\Big) +  \cos^2(\beta_{1, n}L)\Big(\frac{L}{2}+\frac{\sinh(2\alpha_{1, n}L)}{4\alpha_{1, n}}\Big)}}\nonumber\\
&=\sqrt{\frac{2}{L}}.
\end{align}
On the other hand, since $\cos (\beta_{1, n}L)\neq 0$ by \eqref{formula.betan} we also have
\be\label{infpos}
\inf_{n\geq 0} |\cos (\beta_{1, n}L)|>0.
\ee
Finally, combining \eqref{egfunct4}-\eqref{infpos} and the behavior of the sequences $\alpha_{1, n}, \beta_{1, n}$ (as $n\rightarrow \infty$) we obtain the behavior of $|\Psi_{n, xx}(L)|$ which give us the desired one for all components $(\phi^k_{n,xx}(L))_{k=1}^N$.\\

\noindent \textit{Case 2. } This case corresponds to eigenfunctions of the form $\phi_n=(c_1\Phi_n, c_2\Phi_n, \ldots, c_N \Phi_n )$, for  constants $\{c_k\}_{k=1}^N$ with $\sum_{k=1}^Nc_k=0$,  where $(\sigma_n, \Phi_n)$ are eigenpairs of the eigenvalue problem \eqref{eign2}. Let $\{\alpha_{2, n}\}_{n\geq 0}$, $\{\beta_{2, n}\}_{n\geq 0}$ be the sequences built in the proof of Lemma \ref{asymptbehavior} satisfying the compatibility condition \eqref{cucubau1} to ensure $\Phi_n\not\equiv 0$. Imposing the boundary conditions at $x=0$ in \eqref{eign2}  we get
\be\label{cucubau3}
\Phi_n(x)=C_2 \sin (\beta_{2, n}x) +C_4 \sinh(\alpha_{2, n} x).
\ee
Taking into account the normalization of the $L^2$-norm for $\Phi_n$, using similar steps as in Case 1 we finally obtain
\be
\Phi_n(x)=c_0\frac{-\sinh(\alpha_{2, n}L)\sin(\beta_{2, n}x)+\sin(\beta_{2,n}L) \sinh(\alpha_{2, n}x)}{\sqrt{ \sinh^2(\alpha_{2, n}L) \int_0^L \sin^2(\beta_{2, n} x) dx +  \sin^2(\beta_{1, n}L)\int_0^L \sinh^2(\alpha_{2, n}x) dx}},
\ee
where $c_0:=1/\sqrt{c_1^2+c_2^2+\ldots+c_N^2}$. Consequently
\be
\Phi_{n, xx}(L)=c_0\frac{(\alpha_{2, n}^2+\beta_{2, n}^2)\sinh(\alpha_{2, n}L)\sin(\beta_{2, n}L)}{\sqrt{ \sinh^2(\alpha_{2, n}L) \int_0^L \sin^2(\beta_{2, n} x) dx +  \sin^2(\beta_{1, n}L)\int_0^L \sinh^2(\alpha_{2, n}x) dx}}.
\ee
Similarly as in Case 1, one can show that
\be
\lim_{n\rightarrow \infty} \frac{\sinh(\alpha_{2, n}L)}{\sqrt{ \sinh^2(\alpha_{2, n}L) \int_0^L \sin^2(\beta_{2, n} x) dx +  \sin^2(\beta_{1, n}L)\int_0^L \sinh^2(\alpha_{2, n}x) dx}}=\sqrt{\frac{2}{L}}
\ee
and
\be
\inf_{n \geq 0} |\sin(\beta_{2, n}L)|>0,
\ee
which concludes the proof of the lemma.
\end{proof}

\subsection{Well-posedness}

In order to study the well-posedness of problem \eqref{control}-(I) we apply the semigroup theory.  First, let us consider the polynomial
\be\label{pol}
P(x)=\l(\frac{x}{L}\r)^4(x-L)
\ee
and denote $z^k(t,x)=y^k(t,x)-P(x)u^k(t)$, $k\in\{1, \ldots, N\}$. For our purpose it is more convenient to analyze first the equation satisfied by $z=(z^k)_{k=1, N}$. Indeed, it is easy to see that  $z$ satisfies the following nonhomogeneous problem with zero boundary conditions:
\be\label{wellposedness}
\left\{\begin{array}{ll}
z^k_t+ \la z^k_{xx}+z^k_{xxxx}=-Pu_t^k(t) -(\la P_{xx}+P_{xxxx})u^k(t), & (t, x)\in (0, T)\times (0,L)\\[10pt]
z^i(t, 0)=z^j(t, 0), & t\in (0, T), \quad  i,j \in \{1, \ldots, N\}\\[10pt]
z^k(t, L)= z_x^k(t, L)=0, & k\in \{1, \ldots, N\}\\[10pt]
\sum_{k=1}^{N}z^k_x(t, 0)=0, & t\in (0, T)\\[10pt]
z^i_{xx}(t, 0)=z^j_{xx}(t, 0), & t\in (0, T), \quad  i, j \in \{1, \ldots, N\}\\[10pt]
\sum_{k=1}^{N}z^k_{xxx}(t, 0)=0, & t\in (0, T)\\[10pt]
z^k(0, x)=y^k_0(x)-P(x)u^k(0), & x\in (0, L).
\end{array}\right.
\ee
Problem \eqref{wellposedness} can be written as an abstract Cauchy problem. Indeed, it follows that
 \be\label{abseqn}
\l\{\begin{array}{ll}
   z_t+A z= F(t,x, u), & t\in (0, T) \\
   z(0)=z_0, &
 \end{array}\right.
 \ee
where $A$ is the operator defined in \eqref{op}, whereas $F=(F^k)_{k=1,N}$ and $z_0=(z_0^k)_{k=1, N}$ are given by
$$F^k(t, x, u):=-P(x) u_t^k(t)-(\la P_{xx}(x)+P_{xxxx}(x))u^k(t),$$
respectively,
 $$z_0^k(x):=y^k_0(x)-P(x)u^k(0),$$ for any $k\in \{1, \ldots, N\}$.
In the previous section we have proved that $(A, D(A))$ generates a semigroup in $L^2(\Gamma)$. Therefore, applying the Hille-Yosida theory for the Cauchy problem \eqref{abseqn} (see, e.g., \cite[Proposition 4.1.6 and Lemma 4.1.5]{MR1691574}) we finally obtain
\begin{proposition}\label{wellpos}
For any $z_0 \in L^2(\Gamma)$ and $F\in L^1((0, T), L^2(\Gamma))$ there exists a mild function $z\in C([0, T], L^2(\Gamma))$ solution to \eqref{wellposedness}.
Moreover, if $z_0\in D(A)$ and $F\in C([0, T], L^2(\Gamma))\cap L^1((0, T), D(A))$ then the above mild solution satisfies
$z\in C([0, T], D(A))\cap C^1([0, T], L^2(\Gamma))$
solution to \eqref{wellposedness}. 
 \end{proposition}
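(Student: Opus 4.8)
The plan is to recast \eqref{wellposedness} in the abstract form \eqref{abseqn} and to apply the standard theory of inhomogeneous Cauchy problems governed by a $C_0$-semigroup. The first step is to check that $-A$ generates a $C_0$-semigroup $\{e^{-tA}\}_{t\ge 0}$ on $L^2(\Gamma)$. By Proposition \ref{p1}, for a fixed $\mu>\la^2/4$ the operator $A+\mu I$ is densely defined, self-adjoint and non-negative; hence $-(A+\mu I)$ is maximal dissipative and, by the Lumer--Phillips theorem (or directly by the spectral theorem, since $A+\mu I\ge 0$), it generates a $C_0$-semigroup of contractions. Writing $-A=-(A+\mu I)+\mu I$ and regarding $\mu I$ as a bounded perturbation, one concludes that $-A$ generates a $C_0$-semigroup with $\|e^{-tA}\|_{\mathcal{L}(L^2(\Gamma))}\le e^{\mu t}$ for all $t\ge0$; in view of \eqref{spectrumA} the sharp bound $e^{\la^2 t/4}$ actually holds.

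Once the generator property is available the first assertion is immediate: for $z_0\in L^2(\Gamma)$ and $F\in L^1((0,T);L^2(\Gamma))$ the variation-of-constants formula
\be\label{varconst}
z(t)=e^{-tA}z_0+\int_0^t e^{-(t-s)A}F(s)\,ds
\ee
defines the unique mild solution of \eqref{abseqn}, and $z\in C([0,T];L^2(\Gamma))$; this is precisely \cite[Lemma 4.1.5]{MR1691574} applied to $-A$. For the regularity part I would invoke \cite[Proposition 4.1.6]{MR1691574}: if in addition $z_0\in D(A)$ and $F\in C([0,T];L^2(\Gamma))\cap L^1((0,T);D(A))$, then $z$ given by \eqref{varconst} is a strong solution, i.e. $z\in C([0,T];D(A))\cap C^1([0,T];L^2(\Gamma))$ and \eqref{abseqn}, hence \eqref{wellposedness}, holds for every $t\in[0,T]$. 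The identification of $D(A)$ used implicitly here (so that a strong solution of \eqref{abseqn} really is a solution of \eqref{wellposedness}) rests on the elliptic regularity already quoted in the proof of Proposition \ref{p1}, again via \cite{MR697382}.

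Finally, I would record that in the situation of interest the inhomogeneity $F=(F^k)_{k=1,N}$ has the required regularity as soon as $u=(u^k)_{k=1,N}\in (H^1(0,T))^N$: since $P\in C^\infty([0,L])$ is fixed, $F^k(t,\cdot)=-Pu^k_t(t)-(\la P_{xx}+P_{xxxx})u^k(t)$ is smooth in $x$ and its $L^2(0,L)$-norm is controlled by $|u^k_t(t)|+|u^k(t)|$, whence $F\in L^1((0,T);L^2(\Gamma))$ (and $F\in C([0,T];L^2(\Gamma))$ whenever $u\in (C^1([0,T]))^N$). I do not expect any genuine difficulty: the only point that really requires verification is the generator hypothesis, and that has essentially been done in Proposition \ref{p1}; everything else is a direct appeal to the abstract semigroup results.
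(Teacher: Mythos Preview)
Your argument is correct and follows exactly the route the paper takes: the paper does not give a separate proof of this proposition but simply invokes Hille--Yosida theory via \cite[Proposition 4.1.6 and Lemma 4.1.5]{MR1691574}, relying on the generator property established in Proposition~\ref{p1}. You have filled in precisely those details (semigroup generation from the self-adjointness and lower bound of $A$, then the variation-of-constants formula and the cited abstract results), so there is nothing to add.
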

 In our case the regularity of $F$ is ensured by the regularity of the controls we construct, namely $u\in H^{s}(0,T)$ for any $s>0$.
  Proposition \ref{wellpos} can be extended for   solutions of \eqref{control} with initial data $y_0\in D(A)$ or $y_0\in L^2(\Gamma)$.

\subsection{Controllability problem}\label{contPb}
Next we address the controllability problem \eqref{control}-(I) by using the method of moments \cite{MR0335014}. In view of that, let us consider the so-called adjoint problem, that is \be\label{adjoint}
\left\{\begin{array}{ll}
-q^k_t+ \la q^k_{xx}+q^k_{xxxx}=0, & (t, x)\in (0, T)\times (0,L),\\[10pt]
q^i(t, 0)=q^j(t, 0), & t\in (0, T), \quad  i,j \in \{1, \ldots, N\},\\[10pt]
q^k(t, L)= q_x^k(t, L)=0, & t\in (0, T), \q k\in \{1, \ldots, N\},\\[10pt]
\sum_{k=1}^{N}q^k_x(t, 0)=0, & t\in (0, T),\\[10pt]
q^i_{xx}(t, 0)=q^j_{xx}(t, 0), & t\in (0, T), \quad  i, j \in \{1, \ldots, N\},\\[10pt]
\sum_{k=1}^{N}q^k_{xxx}(t, 0)=0, & t\in (0, T),\\[10pt]
q^k(T, x)=q^k_T(x), & x\in (0, L).\\[10pt]
\end{array}\right.
\ee
Then, we have the following characterization of the null-controllability property.
\begin{lemma}\label{Lem_equiv}
System \eqref{control}-(I) is null-controllable in time $T>0$ if and only if, for any initial data $y_0=(y^k_0)_{k=1, N}\in L^2(\Gamma)$  there exists a control function $u=(u^k)_{k=1, N}\in (H^1(0,T))^N$    such that, for any  $q_T=(q^k_T)_{k=1, N}\in L^2(\Gamma)$
\be\label{nscond}
(y_0, q(0))_{L^2(\Gamma)}=\sum_{k=1}^{N}\intt u^k(t) q^k_{xx}(t, L) \dt,
\ee
where $q$ is the solution of \eqref{adjoint}.
\end{lemma}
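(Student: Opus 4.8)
The plan is to establish the equivalence by the standard duality (transposition) argument for linear control systems. First I would fix an initial datum $y_0\in L^2(\Gamma)$ and a control $u\in (H^1(0,T))^N$, and take the mild solution $y$ of \eqref{control}-(I) together with the backward solution $q$ of the adjoint system \eqref{adjoint} for an arbitrary terminal datum $q_T\in L^2(\Gamma)$. The core computation is to differentiate $t\mapsto (y(t),q(t))_{L^2(\Gamma)}$ in time (first for smooth data, e.g. $y_0,q_T\in D(A)$ and $u$ smooth, using Proposition \ref{wellpos}, then passing to the limit by density). Since $y$ solves $y_t+Ay=0$ in the weak sense with the inhomogeneous boundary data $y^k(t,L)=0$, $y^k_x(t,L)=u^k(t)$, and $q$ solves $-q_t+Aq=0$ with homogeneous boundary data $q^k(t,L)=q^k_x(t,L)=0$, integrating by parts in $x$ over each edge the fourth- and second-order terms cancel except for boundary terms at $x=0$ and $x=L$.

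The key point is that all boundary terms at the interior vertex $x=0$ vanish because $y$ and $q$ both satisfy the Kirchhoff-type coupling conditions in \eqref{control} and \eqref{adjoint}: one matches continuity of $\phi,\phi_{xx}$ against the zero-sum conditions on $\phi_x,\phi_{xxx}$ so that $\sum_k [\,\cdot\,]_{x=0}=0$. At $x=L$, since $q^k(t,L)=q^k_x(t,L)=0$ and $y^k(t,L)=0$, the only surviving contribution is the cross term $\sum_k y^k_x(t,L)\,q^k_{xx}(t,L) = \sum_k u^k(t)\,q^k_{xx}(t,L)$ (up to sign bookkeeping from the integrations by parts on $\la\partial_x^2$ and $\partial_x^4$). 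Hence
\[
\frac{d}{dt}\,(y(t),q(t))_{L^2(\Gamma)} = -\sum_{k=1}^{N} u^k(t)\,q^k_{xx}(t,L),
\]
and integrating from $0$ to $T$ gives
\[
(y(T),q(T))_{L^2(\Gamma)} - (y_0,q(0))_{L^2(\Gamma)} = -\sum_{k=1}^{N}\intt u^k(t)\,q^k_{xx}(t,L)\dt.
\]
Since $q(T)=q_T$, this reads $(y(T),q_T)_{L^2(\Gamma)} = (y_0,q(0))_{L^2(\Gamma)} - \sum_k\int_0^T u^k q^k_{xx}(t,L)\dt$.

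From this identity both directions follow immediately. If \eqref{nscond} holds for all $q_T$, then $(y(T),q_T)_{L^2(\Gamma)}=0$ for every $q_T\in L^2(\Gamma)$, whence $y(T)\equiv 0$, i.e. the system is null-controllable. Conversely, if the system is null-controllable, for each $y_0$ choose $u$ steering it to $y(T)=0$; then the identity forces \eqref{nscond} for all $q_T$. I would also note that $q^k_{xx}(\cdot,L)\in L^2(0,T)$ so that the right-hand side of \eqref{nscond} makes sense, using the hidden-regularity/trace estimates implicit in the semigroup setting (or simply the fact that for $q_T$ in a dense subspace $q$ is smooth and one argues by density). The main obstacle, and the only genuinely delicate point, is justifying the integration by parts rigorously at the level of mild solutions: one must carry out the computation for strong solutions (data in $D(A)$, controls in $H^s$ with $s$ large) where all terms are classical, verify the cancellation of the $x=0$ boundary terms carefully using the coupling conditions, and then extend to general $L^2$ data and $H^1$ controls by a density/continuity argument, invoking continuity of $t\mapsto(y(t),q(t))_{L^2(\Gamma)}$ from Proposition \ref{wellpos}.
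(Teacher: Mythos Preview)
Your proposal is correct and follows essentially the same duality argument as the paper: multiply the equation by the adjoint solution, integrate by parts in space, use the coupling conditions at $x=0$ and the boundary conditions at $x=L$ to isolate the single surviving term $\sum_k u^k(t)\,q^k_{xx}(t,L)$, and then read off both implications from the resulting identity. The only cosmetic difference is that the paper integrates directly over $(0,T)\times(0,L)$ rather than differentiating $t\mapsto(y(t),q(t))_{L^2(\Gamma)}$ and then integrating; your additional remarks on carrying out the computation first for strong solutions and extending by density are a welcome point of rigor that the paper leaves implicit.
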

\proof
We proceed as in the classical duality approach. We first multiply the equation in \eqref{control} by $q=(q^k)_{k=1, N}$, the solution of \eqref{adjoint}  to obtain
\[
\sum_{k=1}^{L}\intdoi \l(y^k_t+ \la y^k_{xx}+y^k_{xxxx}\r) q^k \dx \dt=0.
\]
 Integration by parts  leads to
\begin{align}\label{ident}
0=& \sum_{k=1}^{N} \intx y^k q^k \bdt \dx + \sum_{k=1}^{N} \intdoi \l(-q^k_t +\la q^k_{xx}+ q^k_{xxxx} \r) y^k \dx \dt \nonumber\\
&+ \intt \Big(\la y^k_x q^k \bdx - \la y^k q^k_x \bdx + y^k_{xxx} q^k \bdx  \nonumber\\
&\qquad\qquad \qquad -y^k_{xx} q^k_{x} \bdx + y^k_{x} q^k_{xx}\bdx - y^k q^k_{xxx}\bdx  \Big) \dx \dt.
\end{align}
In view of the boundary conditions satisfied by $y=(y^k)_{k=1, N}$ and $q=(q^k)_{k=1, N}$, identity  \eqref{ident} is equivalent to
\be\label{identcontrol}
\sum_{i=1}^{N} \intx (y^k(T, x)q^k(T, x)-y^k_0(x)q^k_0(x)) \dx + \intt u^k(t) q^k_{xx} (t, L)\dt=0.
\ee

\textit{``Only if" implication.}   Since \eqref{control}-(I) is null-controllable (i.e. $y(T, x)=0$ for any $x\in \Gamma $) it follows from \eqref{identcontrol} that condition \eqref{nscond} holds true.

\textit{``If" implication.} Let us assume the validity of \eqref{nscond}. In this case, due to \eqref{identcontrol} we get
$$(y(T), q_T)_{L^2(\Gamma)}=0,$$
for any $q_T \in L^2(\Gamma)$. This implies $y(T)=0$.
\endproof
As a consequence of Lemma \ref{Lem_equiv} and the spectral analysis developed in subsection \ref{spectanal} the controllability problem reduces to the following moment problem.
\begin{lemma}\label{momentpb}
Let $\{\sigma_n\}_{n \geq 0}$ be the distinct eigenvalues of system \eqref{spect} and denote by $m(\sigma_n)$  the multiplicity of each eigenvalue $\sigma_n$  whose eigenspace is generated by the linear independent eigenvectors  $\{\phi_{n, l}\}_{l=1, m(\sigma_n)}$, normalized in $L^2(\Gamma)$.     System \eqref{control}-(I) is null-controllable in time $T>0$ if and only if for any initial data $y_0\in L^2(\Gamma)$,
\be\label{s2}
y_0=\sum_{n\geq 0}\sum_{l=1}^{m(\sigma_n)} y_{0, n, l} \phi_{n, l}\textcolor{red}{,}
\ee 
there exists a control $u=(u^k)_{k=1, N}\in (H^1(0,T))^N$ such that
\be\label{ort}
y_{0, n, l}e^{_-T \sigma_n} =\sum_{k=1}^{N} \phi^k_{n, l, xx}(L) \intt u^k(T-t) e^{-t \sigma_n} \dt, \quad \forall n\geq 0,  \forall l=1,\dots, m(\sigma_n).
\ee
\end{lemma}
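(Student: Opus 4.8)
The plan is to derive the moment problem \eqref{ort} from the characterization \eqref{nscond} in Lemma \ref{Lem_equiv} by feeding into it the particular solutions of the adjoint system \eqref{adjoint} built from eigenfunctions, and by carefully tracking how the eigenfunction expansion of the data propagates. First I would recall from Proposition \ref{p1} (via the shift in the Remark) that $\{\phi_{n,l}\}$ forms an orthonormal basis of $L^2(\Gamma)$, so every $y_0 \in L^2(\Gamma)$ admits the expansion \eqref{s2} with $y_{0,n,l}=(y_0,\phi_{n,l})_{L^2(\Gamma)}$; this legitimizes the statement. The key observation is that $q(t,x)=e^{-(T-t)\sigma_n}\phi_{n,l}(x)$ solves the adjoint problem \eqref{adjoint}: indeed $-q_t+\lambda q_{xx}+q_{xxxx}= \sigma_n q + e^{-(T-t)\sigma_n}(\lambda\phi^k_{n,l,xx}+\phi^k_{n,l,xxxx})=\sigma_n q - \sigma_n q =0$ since $A\phi_{n,l}=\sigma_n\phi_{n,l}$, the internal coupling and the $x=L$ boundary conditions hold because $\phi_{n,l}\in D(A)$, and the terminal datum is $q(T,x)=\phi_{n,l}(x)$.

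Next I would plug this particular $q$ into \eqref{nscond}. On the left side, $q(0)=e^{-T\sigma_n}\phi_{n,l}$, so $(y_0,q(0))_{L^2(\Gamma)}=e^{-T\sigma_n}(y_0,\phi_{n,l})_{L^2(\Gamma)}=e^{-T\sigma_n}y_{0,n,l}$. On the right side, $q^k_{xx}(t,L)=e^{-(T-t)\sigma_n}\phi^k_{n,l,xx}(L)$, so the right-hand side of \eqref{nscond} becomes $\sum_{k=1}^N \phi^k_{n,l,xx}(L)\intt u^k(t)e^{-(T-t)\sigma_n}\dt$. Performing the change of variables $t\mapsto T-t$ in the integral turns this into $\sum_{k=1}^N \phi^k_{n,l,xx}(L)\intt u^k(T-t)e^{-t\sigma_n}\dt$, which is exactly the right side of \eqref{ort}. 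This shows that null-controllability (hence \eqref{nscond} for all $q_T$) implies \eqref{ort} for every $n$ and $l$.

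For the converse I would argue that the family $\{e^{-(T-t)\sigma_n}\phi_{n,l}\}_{n\geq 0,\ l=1,\dots,m(\sigma_n)}$ spans a dense subspace of the space of terminal data in the sense needed: any $q_T\in L^2(\Gamma)$ expands as $q_T=\sum_{n,l}q_{T,n,l}\phi_{n,l}$, the corresponding solution of \eqref{adjoint} is $q(t,x)=\sum_{n,l}q_{T,n,l}e^{-(T-t)\sigma_n}\phi_{n,l}(x)$ (this series converges in $C([0,T],L^2(\Gamma))$ by the backward-in-time smoothing of the semigroup generated by $-A$ on $(0,T)$, or simply for $q_T$ in a dense set and then by a limiting argument using the continuity of both sides of \eqref{nscond} in $q_T$), and then \eqref{nscond} for this $q$ follows by linearity from \eqref{ort} term by term, again after the substitution $t\mapsto T-t$. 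Hence \eqref{ort} for all $n,l$ implies \eqref{nscond} for all $q_T$, i.e.\ null-controllability by Lemma \ref{Lem_equiv}.

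The main obstacle, and the point requiring genuine care rather than bookkeeping, is the convergence and the interchange of the infinite sum with the time integral in the converse direction: one must justify that the series defining $q$ converges strongly in $C([0,T],L^2(\Gamma))$ so that \eqref{nscond} — which involves the trace $q^k_{xx}(t,L)$, a quantity not continuous for the $L^2$-topology — can be evaluated termwise. The clean way around this is to first establish the equivalence for $q_T$ in the dense subspace of finite linear combinations of eigenfunctions, where everything is a finite sum and the computation above is literal, and then extend to general $q_T\in L^2(\Gamma)$ by density, using that the control $u$ is fixed once $y_0$ is fixed and that the map $q_T\mapsto$ (right side of \eqref{nscond}) is continuous on $L^2(\Gamma)$ by the hidden regularity / admissibility estimate for the adjoint system, which follows from the well-posedness in Proposition \ref{wellpos} together with a standard multiplier computation; alternatively one simply takes \eqref{nscond} as required to hold for $q_T$ in the total family $\{\phi_{n,l}\}$, which by linearity and density is equivalent to requiring it for all $q_T\in L^2(\Gamma)$.
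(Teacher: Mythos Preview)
Your proof is correct and follows essentially the same approach as the paper: both arguments expand the solution of the adjoint problem \eqref{adjoint} in the eigenbasis $\{\phi_{n,l}\}$, obtaining $q(t,x)=\sum_{n,l}e^{-(T-t)\sigma_n}q_{n,l}\phi_{n,l}(x)$, and then insert this into the duality identity \eqref{nscond} from Lemma \ref{Lem_equiv} (with the change of variables $t\mapsto T-t$) to reduce to \eqref{ort}. The only cosmetic difference is that the paper expands a general $q_T$ directly and handles both implications at once, whereas you first test with $q_T=\phi_{n,l}$ for the forward direction and then invoke density for the converse; your discussion of the convergence/trace issue is in fact more careful than the paper's, which simply writes the series formally.
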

\proof
For any $q_T\in L^2(\Gamma)$ we write
$$q_T =\sum_{n\geq 0}  \sum_{l=1}^{m(\sigma_n)} q_{n, l} \phi_{n, l}, $$
where $\sum_{n\geq 0} \sum_{l=1}^{m(\sigma_n)}|q_{n, l}|^2 < \infty$.
Then, seeking for solutions of system \eqref{adjoint} in separable variable
$$q(t, x)=\sum_{n\geq 0} \sum_{l=1}^{m(\sigma_n)} \ov{q}_{n, l}(t) \phi_{n, l} (x),$$
the time coefficients $\ov{q}_{n, l}$ satisfy
$$\ov{q}^{'}_{n, l}-\sigma_n \ov{q}_{n, l}=0, \quad \ov{q}_{n, l}(T)=q_{n, l}.$$
Then, we obtain
$$q(t, x)=\sum_{n\geq 0}\sum_{l=1}^{m(\sigma_n)}e^{(-T+t)\sigma_n}q_{n, l} \phi_{n, l}(x),$$
and therefore
\be\label{s1}
q_{xx}(t, L)=\sum_{n\geq 0} \sum_{l=1}^{m(\sigma_n)} e^{(-T+t)\sigma_n}q_{n, l} \phi_{n, l, xx}(L).
\ee
Plugging \eqref{s1} and \eqref{s2} in the controllability condition \eqref{nscond} we obtain that the existence of a function $u$ satisfying the moment problem \eqref{ort} is equivalent to  the null-controllability property.
\endproof

\subsection{Proof of Theorem \ref{main theorem 1}}\label{prooft1}
We show that a control acting on $N-1$ nodes is sufficient to obtain the null-controllability of system \eqref{control}-(I).

According to Lemma \ref{momentpb} we have  to solve the moment problem \eqref{ort} by constructing a control $u\in (H^1(0,T))^N$. In order to do that we settle one of the components of $u=(u^k)_{k=1, N}$ to be identically zero.  For simplicity, without loss of generality we assume $u^N\equiv 0$. Then the problem of moments \eqref{ort} becomes
\be\label{cond}
y_{0, n, l}e^{_-T \sigma_n} =\sum_{k=1}^{N-1} \phi^k_{n, l, xx}(L) \intt u^k(T-t) e^{-t \sigma_n} \dt, \quad \forall n\in \nn, \ \forall l=1, \dots, m(\sigma_n).
\ee

  In fact, in view of the analysis done in the proof of Lemma \ref{asymptbehavior} we  have $m(\sigma_n)\in \{1, N-1\}$. More precisely, if $\sigma_n$ is simple eigenvalue (i.e. $m(\sigma_n)=1$) then, in view of Lemma \ref{eigen_partition}  we  choose $\phi_{n, 1}=(\Psi_n, \ldots, \Psi_n)$ where $(\sigma_n, \Psi_n)$ is an eigenpair of problem \eqref{eign.1.tree} such that $\|\phi_{n, 1}\|_{L^2(\Gamma)}=1$. On the other hand, if $m(\sigma_n)=N-1$ we  choose for any $l\in \{1, \ldots, N-1\}$
\begin{equation}\label{multiple}
  \phi_{n, l}=\Phi_n e_l -\Phi_n e_{l+1},
\end{equation}
where $\{e_i\}_{i=1, N}$ is the canonical basis of the euclidian space $\rr^N$ and $(\sigma_n, \Phi_n)$ is an eigenpair of problem \eqref{eign2} such that
$\|\phi_{n,l}\|_{L^2(\Gamma)}=1$. Then $\{\phi_{n, l}\}_{l=1, N-1}$ form an orthonormal basis of the eigenspace of $\sigma_n$.
If $m(\sigma_n)=1$ then \eqref{cond} becomes
\be\label{cond1}
y_{0, n, 1}e^{_-T \sigma_n} =\Psi_{n, xx}(L)\sum_{k=1}^{N-1}  \intt u^k(T-t) e^{-t \sigma_n} \dt, \quad \forall n\geq 0.
\ee
Therefore, it suffices that the control inputs $u^k\in L^2(0, T)$, $k\in \{1, \ldots, N-1 \}$  satisfy the following moment problems
\be\label{defcontrol}
\left\{\begin{array}{ll}
  \int_0^T u^1(T-t) e^{-t\sigma_n} dt=\frac{y_{0, n, 1}e^{-T \sigma_n}}{\Psi_{n, xx}(L)}, &   \forall n\geq 0 \\[10pt]
  \int_0^T u^k(T-t) e^{-t\sigma_n} dt=0, &\textrm{ if }  k=2, \dots, N-1, \quad \forall n\geq 0,\\
\end{array}\right.
\ee
where, by convention, the second line in  \eqref{defcontrol} is not taken into account when $N=2$.
On the other hand, if $\sigma_n$ is a multiple eigenvalue (i.e. $m(\sigma_n)=N-1$) then \eqref{cond} becomes
\be\label{cond3}\left\{\begin{array}{lll}
y_{0, n, l}e^{-T \sigma_n} = \Phi_{n, xx}(L) \left(\intt u^l(T-t) e^{-t \sigma_n} \dt-\intt u^{l+1}(T-t) e^{-t \sigma_n} \dt\right) & \\[3pt]
 \hspace{3cm}  \forall n\geq 0, \   \forall l=1,\dots,  N-2 &\\[5pt]
y_{0, n, N-1}e^{-T \sigma_n} = \Phi_{n, xx}(L) \intt u^{N-1}(T-t) e^{-t \sigma_n}, \   \forall n\geq 0,
\end{array}\right.
\ee
where we make the convection that when $N=2$ the first relation in \eqref{cond3} does not appear.
Equivalently we can write
\begin{equation}\label{syst}
\left(
  \begin{array}{c}
    y_{0, n, 1} \\
    \vdots \\
    \vdots \\
      y_{0, n, N-1} \\
  \end{array}
\right) e^{-T \sigma_n}= \Phi_{n, xx}(L) M
 \left(
  \begin{array}{c}
    \int_0^T u^1(T-t)e^{-t \sigma_n} \dt  \\
    \vdots \\
    \vdots \\
      \int_0^T u^{N-1}(T-t)e^{-t \sigma_n} \dt \\
  \end{array}
\right)
\end{equation}
where matrix $M$ is given by 
$$M=(a_{ij})_{i,j=1, N-1}:= \left(
 \begin{array}{cccccc}                                             1& -1 & 0 & \ldots & \ldots  & 0   \\
 0& 1 & -1 & 0 & \ldots & 0\\                                            \vdots &  \ldots & \ldots & \ddots & \ddots  & 0 \\
  0& \ldots & \ldots & 0 & 1 & -1 \\                                            0& \ldots & \ldots & \ldots & 0 & 1 \\
 \end{array}\right).$$
 Let us denote by $M^{-1}=(a^{ij})_{i,j=1, N-1}$ the inverse of matrix $M$.
Then we get
\begin{equation}\label{syst2}
  \left(
  \begin{array}{c}
    \int_0^T u^1(T-t)e^{-t \sigma_n} \dt  \\
    \vdots \\
    \vdots \\
      \int_0^T u^{N-1}(T-t)e^{-t \sigma_n} \dt \\
  \end{array}
\right) =\frac{1}{\Phi_{n, xx}(L)} M^{-1} \left(
  \begin{array}{c}
    y_{0, n, 1} \\
    \vdots \\
    \vdots \\
      y_{0, n, N-1} \\
  \end{array}
\right) e^{-T\sigma_n}.
 \end{equation}
To summarize, the moment problems to be solved are
\begin{equation}\label{momprob}
\int_0^T u^k(T-t)e^{-t \sigma_n} \dt=c_{n, k}, \quad n \geq 0,
\end{equation}
for each $k=1, \dots, N-1$, where
$c_{n,k}$ are given by
\begin{equation*}
  c_{n, 1}:=\left\{\begin{array}{ll}
              \frac{y_{0, n, 1}e^{-T\sigma_n}}{\Psi_{n,xx}(L)}, & \textrm{if } m(\sigma_n)=1, \\[3pt]
              \sum_{j=1}^{N-1} \frac{a^{1j}y_{0, n,j}e^{-T\sigma_n}}{\Phi_{n, xx}(L)},& \textrm{if } m(\sigma_n)=N-1,
            \end{array}\right.
\end{equation*}
respectively, for any $k\geq 2$ (only when $N\geq 3$)
\begin{equation*}
 \quad   c_{n, k}:=\left\{\begin{array}{ll}
              0, & \textrm{if } m(\sigma_n)=1, \\[3pt]
              \sum_{j=1}^{N-1} \frac{a^{kj}y_{0,n, j}e^{-T\sigma_n}}{\Phi_{n, xx}(L)}, & \textrm{if } m(\sigma_n)=N-1.
            \end{array}\right.
\end{equation*}

In view  of \cite[(3.9)]{MR0335014},
to solve the moment problem  \eqref{momprob}, it is enough to show that the series $$\sum_{n\geq n_0} c_{n, k} \sigma _n^{1/2} \prod_{j=1, j\neq n}^{\infty}\frac{\sigma_n+\sigma_j}{\sigma_n-\sigma_j}, \quad \forall k=1, \dots, N-1,$$
is absolutely convergent. Indeed this is a consequence of 
 the asymptotic behavior of the positive eigenvalues $\{\sigma_n\}_{n\geq n_0}$ given in  Lemma \ref{asymptbehavior} and the asymptotic properties of the corresponding eigenfunctions $\{\phi_n\}_{n\geq n_0}$ in Lemma \ref{asy_eigenfunct}. 
% 
% 
% Indeed, this is true since $\{\sigma_n\}_{n\geq 0}$ fulfill condition (3.10) in \cite{MR0335014} and we can apply both Lemma 3.1 in \cite{MR0335014} and our asymptotic results above to guarantee the convergence. More exactly, in view of (3.3) and (3.9) in \cite{MR0335014} we can employ the method of moments to construct a so-called \textit{bona fide} solution for each one of  the moment problems \eqref{momprob}.  
 Thus we are able to build the controls $(u^k)_{k=1, N-1}$ verifying  \eqref{momprob} and finalize the proof.\\

\noindent \textbf{Optimality of $N-1$ controls}.  For the proof of the second statement in Theorem \ref{main theorem 1} let us consider without loss the generality that $i_0=1$ and $j_0=2$ and  assume that we can control system \eqref{control}-(I) with $N-2$ controls, i.e. $u^1\equiv u^2\equiv 0$.
We consider as the initial datum $y_0=(\Phi, -\Phi, 0, \ldots, 0)$, where $\Phi$ is an  eigenfunction of system \eqref{eign2} corresponding to some eigenvalue $\sigma$. Then system \eqref{ort} to be solved 
\[
%\be\label{ortt}
e^{-T \sigma} =\phi_{xx}(L) \intt (u^1(T-t)-u^2(T-t)) e^{-t \sigma} \dt
\] 
%\ee
has no solution since the right hand side above is vanishing.
Therefore, such $y_0$ cannot be driven to zero by any control $u=(u^k)_{k=1,N}$ with the first two components vanishing. The proof of the theorem is now complete.
\endproof

\section{The KS equation of type (II)}\label{secII}

The results obtained  in this section are based on a careful analysis of the eigenvalues for the corresponding elliptic operator of system \eqref{control}-(II).  In the previous section we addressed this problem for system \eqref{control}-(I) by using specific spectral results obtained in \cite{MR2556747}.
 In the present case such results are not applicable. Therefore, to take advantage of the strategy implemented in the previous case  an additional work has to be done by determining explicitly the spectrum and the eigenfunctions for two different eigenvalue problems as follows.
\subsection{Preliminaries I}\label{prelim1}
In this subsection  we analyze the following eigenvalue problem
\be\label{eigenval}
\left\{\begin{array}{ll}
 \la \phi_{xx}+\phi_{xxxx}=\sigma \phi, & x\in (0,L)\\[10pt]
\phi_x(0)=\phi_{xxx}(0)=0, & \\[10pt]
\phi_{x}(L)=\phi_{xxx}(L)=0. & \\[10pt]
\end{array}\right.
\ee
Again, as in section \ref{spectanal} we can employ spectral analysis tools to show that system \eqref{eigenval} has a sequence of eigenvalues which tends to infinity and is bounded from below by $-\lambda^2/4$.

Making use of the characteristic equation of the equation in \eqref{eigenval}, i.e.,
\[
r^4+\lambda r^2 -\sigma =0
\]
in view of the notations in \eqref{notations} we distinguish several cases as follows.
\subsection*{Case I: $\sigma>0$} In this case, the general solution of the equation in \eqref{eigenval} is given by
$$\phi(x)=C_1 \cosh (\alpha x)+C_2 \sinh (\alpha x)+C_3 \cos (\beta x)+C_4 \sin (\beta x),$$
where $C_i$ are real constants, $i=1, \dots, 4$.
Imposing the boundary conditions at $x=0$ we easily obtain that $C_2=C_4=0$. The boundary conditions at $x=L$  provide a nontrivial solution $\phi$  if  $\sinh (\alpha L) \sin (\beta L)=0$. Since $\alpha>0$ this is equivalent to the compatibility condition
\[
\sin (\beta L)=0.
\]
Then, we get a sequence $\{\beta_n\}_{n\geq 1}$ of positive solutions, $\beta_n=n\pi/L$. In view of \eqref{notations} we obtain that the sequence of positive simple eigenvalues is given by
\[
\sigma_n =\left(\frac{n\pi}{L}\right)^4-\lambda \left(\frac{n \pi}{L}\right)^2, \quad n>  \frac{L\sqrt{\lambda}}{\pi}.
\]
The corresponding eigenfunctions are
\[
\phi_n(x)=C_1 \cos(\beta_n x), \quad C_1\neq 0.
\]

\subsection*{Case II: $\sigma=0$} The general solution of the equation in \eqref{eigenval} is
$$ \phi(x)=C_1+C_2 x +C_3 \cos (\sqrt{\lambda}x)+ C_4 \sin (\sqrt{\lambda}x)\textcolor{red}{.}$$
From the boundary conditions at $x=0$ we deduce  that $C_2=C_4=0$. According to the boundary conditions at $x=L$ we have the following alternatives:
\begin{enumerate}
\item If $\sin(\sqrt{\lambda} L)=0$, i.e. $\lambda\in \mathcal{N}_2$, then $\sigma=0$ is an eigenvalue with multiplicity two and the eigenfunctions are
$$\phi_0(x)=C_1+C_3 \cos (\sqrt{\lambda}x), \quad C_1^2+C_3^2\neq 0.$$
\item  If $\sin(\sqrt{\lambda} L) \neq 0$, $\lambda\notin \mathcal{N}_2$, then $\sigma=0$ is a simple eigenvalue and  the eigenfunctions are constant functions, {i.e.,}
$$\phi_0(x)=C, \quad C\neq 0.$$
\end{enumerate}

\subsection*{Case III: $-\lambda^2/4< \sigma<0$} The general solution of the equation in \eqref{eigenval} is
$$\phi(x)=C_1 \cos (\gamma x)+C_2 \sin (\gamma x)+C_3 \cos (\beta x)+C_4 \sin (\beta x),$$
where $\beta\neq \gamma$.
The boundary conditions at $x=0$ lead to $C_2=C_4=0$. Defining the quantity
$\delta:=\sin (\beta L)\sin (\gamma L)$, from the boundary conditions at $x=L$ we obtain that $\phi$ is an eigenfunction if and only if $\delta=0$. From $\delta=0$ we deduce that that all the eigenvalues are
\[
\sigma_n = \left(\frac{n\pi}{L}\right)^4-\lambda \left(\frac{n \pi}{L}\right)^2, \quad 1\leq n <   \frac{L\sqrt{\lambda}}{\pi},
\]

We distinguish the following cases for $\delta=0$.

\begin{enumerate}
\item \textit{The case $\sin (\beta L)=\sin (\gamma L)=0$}, i.e. $\lambda\in \mathcal{N}_1$. We obtain two sequences of solutions, $\beta_n=n\pi/L$ and $\gamma_m=m\pi/L$ with $n\neq m$ (since $\beta\neq \gamma$), $n, m\geq 1$.   From \eqref{connection} this is equivalent to 
$$\lambda = (\frac{n\pi}L)^2+(\frac{m\pi}L)^2\in \mathcal{N}_1$$ and in this case 
$\sigma_n=\sigma_m$ and has multiplicity two 
%
%
%the finite set of  negative eigenvalues is  given by
% \[
% \sigma_{n, m}(x)=-\beta_n^2 \gamma_m^2=- \frac{n^2m^2 \pi^4}{L^4};  \quad 1\leq m< n,\ nm < \frac{\lambda L^2 }{2\pi^2}.
% \]
with  the corresponding eigenfunctions 
  \[
  \phi_{n, m}(x)= C_1  \cos \left(\frac{n \pi x}{L}\right)+C_3  \cos \left(\frac{m \pi x}{L}\right),  \quad C_1^2+C_3^2 \neq 0.
  \]
	  \item  \textit{ The case $\delta=0,$ such that $\sin^2(\beta L)+\sin^2(\gamma L)> 0$, i.e. $\lambda\notin \mathcal{N}_1$}. 
%	  This case corresponds to the case when
%	  \[
%	  \lambda -\left(\frac{n \pi}{L}\right)^2 \notin \mathcal{N}_2, \quad \forall\ 1\leq n <   \frac{L\sqrt{\lambda}}{\pi}.
%	  \]
% Then we obtain the eigenvalues, i.e.
%\[
%\sigma_n = \left(\frac{n\pi}{L}\right)^4-\lambda \left(\frac{n \pi}{L}\right)^2, \quad 1\leq n <   \frac{L\sqrt{\lambda}}{\pi},
%\]
In this case all the eigenvalues $\sigma_n$ are simple and  the corresponding eigenfunctions are
\[
\phi_n(x)=C_3 \cos \left(\frac{n \pi x}{L}\right), \quad C_3\neq 0.
\]
\end{enumerate}
From the spectral analysis developed above it is easy to check the following lemma that will play an important role in the proof of Theorem \ref{main theorem 2}.

\begin{lemma}\label{lem1} Let $\lambda>0$ and $(\sigma, \phi)$ be an eigenpair of system \eqref{eigenval}. The following holds:
\begin{enumerate}

\item If $\sigma> 0$ then
\be
\phi(L)\neq 0 \textrm{  and } \lambda \phi(L)+\phi_{xx}(L)\neq 0.\nonumber
\ee
\item If $\sigma=0$ and $\lambda\in \mathcal{N}_2$ then $\phi(L)$ and $\lambda \phi(L)+\phi_{xx}(L)$ cannot vanish simultaneously.
\item If $\sigma=0$ and $\lambda\not \in \mathcal{N}_2$ then $\phi(L)\neq 0$ and $\lambda \phi(L)+\phi_{xx}(L)\neq 0$.
\item  If $\sigma<0$ and $\lambda\in \mathcal{N}_1$ then $\phi(L)$ and $\lambda \phi(L)+\phi_{xx}(L)$ cannot vanish simultaneously.
 \item If $\sigma <0$ and $\lambda \not \in \mathcal{N}_1$ then
\be
\phi(L)\neq 0 \textrm{ and } \lambda \phi(L)+\phi_{xx}(L)\neq 0.\nonumber
\ee
\end{enumerate}
\end{lemma}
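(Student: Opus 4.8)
The plan is to prove Lemma \ref{lem1} case by case, using directly the explicit description of the eigenfunctions obtained in the preceding case analysis (Cases I, II, III). In every case the eigenfunctions $\phi$ of \eqref{eigenval} are linear combinations of cosines (the sine parts vanish because of the boundary conditions $\phi_x(0)=\phi_{xxx}(0)=0$), so $\phi(L)$ and $\phi_{xx}(L)$ are explicit trigonometric (or hyperbolic) expressions in the known frequencies $\alpha,\beta,\gamma$. The key observation is that $\lambda\phi+\phi_{xx}$ annihilates the ``$\beta$-mode'' $\cos(\beta x)$ up to a sign, since $\beta^2-\alpha^2=\lambda$ (so $\lambda\cos(\beta x)-\beta^2\cos(\beta x)=-\alpha^2\cos(\beta x)$, etc.), which makes the two linear functionals $\phi\mapsto\phi(L)$ and $\phi\mapsto\lambda\phi(L)+\phi_{xx}(L)$ ``see'' different components of $\phi$; this is exactly why they cannot vanish simultaneously unless $\phi\equiv 0$.

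Concretely, for \textbf{Case I} ($\sigma>0$) one has $\phi(x)=C_1\cos(\beta_n x)$ with $\beta_n=n\pi/L$, $C_1\neq 0$, so $\phi(L)=C_1\cos(n\pi)=\pm C_1\neq 0$; and $\lambda\phi(L)+\phi_{xx}(L)=(\lambda-\beta_n^2)C_1\cos(n\pi)=-\alpha_n^2 C_1\cos(n\pi)\neq 0$ since $\alpha_n>0$. For \textbf{Case II} with $\sigma=0$: if $\lambda\notin\mathcal{N}_2$ then $\phi\equiv C\neq 0$, giving $\phi(L)=C\neq 0$ and $\lambda\phi(L)+\phi_{xx}(L)=\lambda C\neq 0$; if $\lambda\in\mathcal{N}_2$ then $\phi(x)=C_1+C_3\cos(\sqrt{\lambda}x)$ with $\sin(\sqrt{\lambda}L)=0$, hence $\cos(\sqrt{\lambda}L)=\pm 1$, so $\phi(L)=C_1\pm C_3$ and $\lambda\phi(L)+\phi_{xx}(L)=\lambda C_1+\lambda C_3\cos(\sqrt{\lambda}L)-\lambda C_3\cos(\sqrt{\lambda}L)=\lambda C_1$; if both vanished we would get $C_1=0$ and then $C_3=0$, contradicting $C_1^2+C_3^2\neq 0$. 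For \textbf{Case III} ($-\lambda^2/4<\sigma<0$) the eigenfunctions are combinations $C_1\cos(\gamma x)+C_3\cos(\beta x)$ with $\beta,\gamma\in\{k\pi/L\}$: when $\lambda\notin\mathcal{N}_1$ exactly one of $\sin(\beta L),\sin(\gamma L)$ is zero, so effectively $\phi(x)=C_3\cos(n\pi x/L)$ and the computation is identical to Case I using $\beta^2+\gamma^2=\lambda$ (so $\lambda\phi(L)+\phi_{xx}(L)=\gamma^2 C_3\cos(n\pi)\neq 0$ — here the surviving frequency plays the role that $-\alpha^2$ played before, up to sign); when $\lambda\in\mathcal{N}_1$, $\phi(x)=C_1\cos(m\pi x/L)+C_3\cos(n\pi x/L)$ with $n\neq m$, and $\phi(L)=\pm C_1\pm C_3$ while $\lambda\phi(L)+\phi_{xx}(L)=(\lambda-(m\pi/L)^2)(\pm C_1)+(\lambda-(n\pi/L)^2)(\pm C_3)=(n\pi/L)^2(\pm C_1)+(m\pi/L)^2(\pm C_3)$ (using $\lambda=(m\pi/L)^2+(n\pi/L)^2$). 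Since $(m\pi/L)^2\neq(n\pi/L)^2$, the $2\times 2$ system obtained from setting both functionals to zero has nonzero determinant, forcing $C_1=C_3=0$, again a contradiction.

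The only mildly delicate point — the step I would flag as the main ``obstacle'' — is keeping the sign bookkeeping of $\cos(k\pi L/L)=(-1)^k$ and the pairing of frequencies straight in the mixed cases II.(i) and III.(i), and making sure that in Case III one correctly identifies which of the two admissible frequencies survives when $\lambda\notin\mathcal{N}_1$; but since all quantities are explicit there is no real analytic difficulty, only care. I would present the argument compactly by treating $\phi(L)$ and $\lambda\phi(L)+\phi_{xx}(L)$ as two linear functionals on the (at most two-dimensional) eigenspace and checking in each case that the corresponding $1\times 1$ or $2\times 2$ matrix of their values on the natural basis has full rank, which immediately yields both the ``$\neq 0$'' statements and the ``cannot vanish simultaneously'' statements.
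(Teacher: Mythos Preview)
Your proof is correct and follows exactly the route the paper intends: the paper does not give a separate proof of this lemma but simply states that it is ``easy to check'' from the explicit eigenfunctions computed in Cases I--III of Section~\ref{prelim1}, and your case-by-case verification using those explicit forms is precisely that check. The only minor imprecision is that in part (4) (when $\lambda\in\mathcal{N}_1$) not \emph{every} negative eigenvalue need be double---only the specific $\sigma$ corresponding to the pair $(m,n)$ with $\lambda=(m\pi/L)^2+(n\pi/L)^2$ is; the remaining simple negative eigenvalues are handled by the same computation as in your part (5), which you already have, so the argument is complete.
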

\subsection{Preliminaries II}\label{prelim2}
Secondly  we analyze the following eigenvalue problem
\be\label{eigenval2}
\left\{\begin{array}{ll}
 \la \phi_{xx}+\phi_{xxxx}=\sigma \phi, & x\in (0,L),\\[10pt]
\phi(0)=\phi_{xx}(0)=0, & \\[10pt]
\phi_{x}(L)=\phi_{xxx}(L)=0. & \\[10pt]
\end{array}\right.
\ee
Again, the spectrum of \eqref{eigenval2} is pure discrete, strictly bounded from below by $-\lambda^2/4$ and tends to infinity. Making use of the notations \eqref{notations}     we distinguish the following cases.
\subsection*{Case I: $\sigma>0$}
The general solution of the equation in \eqref{eigenval2} is
$$\phi(x)=C_1 \cosh (\alpha x)+C_2 \sinh (\alpha x)+C_3 \cos (\beta x)+C_4 \sin (\beta x).$$
From the boundary conditions at $x=0$ we easily obtain that $C_1=C_3=0$. The conditions at $x=L$  say that $\phi$ is an  eigenfunction under the constraint
\[
\cos (\beta L)=0.
\]
We get a sequence $\{\beta_n\}_{n\geq 0}$ of positive solutions,  $\beta_n=(2n+1)\pi/2L$. In view of \eqref{notations}-\eqref{connection} we obtain the sequence of simple eigenvalues $\sigma_n =\beta_n^2(\beta_n^2 -\lambda)$, i.e.
\[
\sigma_n = \Big(\frac{(2n+1)\pi}{2L}\Big)^4 -\lambda \Big(\frac{(2n+1)\pi}{2L}\Big)^2, \quad 2n+1>\frac {2L\sqrt \lambda}\pi,
\]
with the corresponding family of eigenfunctions
$$\phi_n (x)=C_4 \sin (\beta_n x), \quad C_4\neq 0.$$
\subsection*{Case II: $\sigma=0$} The general solution for the equation in \eqref{eigenval2} is
$$ \phi(x)=C_1+C_2 x +C_3 \cos (\sqrt{\lambda}x)+ C_4 \sin (\sqrt{\lambda}x).$$
From the boundary conditions at $x=0$ we deduce that $C_1=C_3=0$. Then, the boundary conditions at $x=L$ produce the following cases:
 \begin{enumerate}
 \item If $\cos(\sqrt{\lambda} L)=0$, which is equivalent to $\lambda  \in \mathcal{N}_3$,  then $\sigma=0$ is a simple eigenvalue with the corresponding eigenfunctions
$$\phi_0(x)=C_4 \sin (\sqrt{\lambda} x), \quad C_4\neq 0.$$
 \item If $\cos(\sqrt{\lambda} L)\neq 0$, i.e. $\lambda \not \in \mathcal{N}_3$, then $\sigma=0$ is not an eigenvalue.
\end{enumerate}
\subsection*{Case III: $-\lambda^2/4< \sigma<0$} The general solution of the equation in \eqref{eigenval2} is
$$\phi(x)=C_1 \cos (\gamma x)+C_2 \sin (\gamma x)+C_3 \cos (\beta x)+C_4 \sin (\beta x).$$
The boundary conditions at $x=0$ give $C_1=C_3=0$.
Then, from the conditions at $x=L$ we obtain that $\phi$ is an eigenfunction if and only if
$
\delta:=\cos (\beta L)\cos (\gamma L)=0.
$
We deduce that the eigenvalues are 
\[
\sigma_n = \Big(\frac{(2n+1)\pi}{2L}\Big)^4 -\lambda \Big(\frac{(2n+1)\pi}{2L}\Big)^2, \quad 2n+1<\frac {2L\sqrt \lambda}\pi.
\]
 We distinguish the following cases for $\delta=0$:
 \begin{enumerate}
  \item \textit{The case $\cos (\beta L)=\cos (\gamma L)=0$, i.e. $\lambda \in \mathcal{N}_{odd}$}. We obtain $\beta_n=(2n+1)\pi/2L$ and $\gamma_m=(2m+1)\pi/2L$, with $n\neq m$ (since $\beta\neq \gamma$) and 
   \[
 \lambda = (\frac{(2n+1)\pi}{2L})^2+(\frac{(2m+1)\pi}{2L})^2\in \mathcal{N}_{odd}
  \]
In this case the eigenvalues $\sigma_n=\sigma_m$ have multiplicity two and 
%they are given by
% \[
% \sigma_{n, m}(x)=-\beta_n^2 \beta_m^2;  \quad 0\leq m< n, \ (2n+1)(2m+1)\leq \frac{2\lambda L^2}{\pi^2}.
% \]
  the corresponding eigenfunctions are
  \[
  \phi_{n, m}(x)= C_2 \sin(\beta_n x)+C_4 \sin (\beta_m x),  \quad C_2^2+C_4^2 \neq 0.
  \]
 \item \textit{The case $\cos^2 (\beta L)+\cos^2 (\gamma L)>0$, i.e. $\lambda \not \in \mathcal{N}_{odd}$}. 
 In this case all the eigenvalues $\sigma_n$ are simple and  
% We obtain a finite number of simple eigenvalues such as
%     \[
%     \sigma_n=\frac{(2n+1)^2 \pi^2}{4L^2}\left(\frac{(2n+1)^2 \pi^2}{4L^2}-\lambda\right), \quad 0\leq n \leq \max\left\{ \left[ \frac{1}{2}\left(\frac{2L\sqrt{\lambda}}{\pi}-1\right)\right], 0\right\}.
%     \]
  the corresponding eigenfunctions are given by
\[
\phi_n(x)= C \sin \left(\frac{(2n+1)\pi x}{2L}\right), \quad C\neq 0.
\]
\end{enumerate}
Combining the spectral results of this section we conclude
\begin{lemma}\label{lem2} Let $\lambda>0$ and $(\sigma, \phi)$ be an eigenpair of system \eqref{eigenval2}.
\begin{enumerate}
\item If $\sigma> 0$ then
\be
\phi(L)\neq 0 \textrm{  and } \lambda \phi(L)+\phi_{xx}(L)\neq 0. \nonumber
\ee
\item If $\lambda \in \mathcal{N}_3$ then $\sigma=0$ is an eigenvalue and
\be
\phi(L)\neq 0,   \lambda \phi(L)+\phi_{xx}(L)=0.\nonumber
\ee
\item If $\lambda \not \in \mathcal{N}_3$ then $\sigma=0$ is not an eigenvalue.
\item If $\sigma <0$ and $\lambda  \in \mathcal{N}_{odd}$ then $\phi(L)$ and $\lambda \phi(L)+\phi_{xx}(L)$ cannot vanish simultaneously.
\item If $\sigma<0$ and $\lambda \not \in \mathcal{N}_{odd}$ then
\be
\phi(L)\neq  0 \textrm{  and } \lambda \phi(L)+\phi_{xx}(L)\neq 0. \nonumber
\ee
\end{enumerate}
\end{lemma}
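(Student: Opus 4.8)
The plan is to read off all five assertions directly from the explicit spectral description of \eqref{eigenval2} worked out just above the statement, exactly in the spirit of the proof of Lemma \ref{lem1}. In each of the three regimes $\sigma>0$, $\sigma=0$, $\sigma<0$ we already know the exact form of every eigenfunction, so the only task is to evaluate the two quantities $\phi(L)$ and $\lambda\phi(L)+\phi_{xx}(L)$ on those explicit functions and check they do not vanish (or, in the degenerate cases, cannot vanish simultaneously). A useful preliminary observation is that if $\phi_{xx}+\lambda\phi = \mu\phi$ pointwise for the relevant quantity $\mu$, one can relate $\lambda\phi(L)+\phi_{xx}(L)$ to $\sigma\phi(L)/\beta^2$-type expressions via the characteristic relation $r^4+\lambda r^2-\sigma=0$; concretely, for a pure mode $\cos(\beta x)$ or $\sin(\beta x)$ one has $\phi_{xx}=-\beta^2\phi$, so $\lambda\phi(L)+\phi_{xx}(L)=(\lambda-\beta^2)\phi(L)$, and for a mode $\cosh(\alpha x)$ or $\sinh(\alpha x)$ one has $\phi_{xx}=\alpha^2\phi$, so the combination is $(\lambda+\alpha^2)\phi(L)$. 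This reduces everything to sign/nonvanishing bookkeeping on the trigonometric and hyperbolic factors.

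For statement (1), $\sigma>0$: here $\phi(x)=C_4\sin(\beta_n x)$ with $\cos(\beta_n L)=0$, hence $\sin(\beta_n L)=\pm1\neq0$, so $\phi(L)=\pm C_4\neq0$; and $\lambda\phi(L)+\phi_{xx}(L)=(\lambda-\beta_n^2)\phi(L)$, which is nonzero because $\sigma_n=\beta_n^2(\beta_n^2-\lambda)>0$ forces $\beta_n^2\neq\lambda$. For statement (2), $\lambda\in\mathcal N_3$: then $\cos(\sqrt\lambda L)=0$ and the zero eigenfunction is $\phi_0(x)=C_4\sin(\sqrt\lambda x)$, so $\phi_0(L)=\pm C_4\neq0$ while $\lambda\phi_0(L)+\phi_{0,xx}(L)=(\lambda-\lambda)\phi_0(L)=0$; this gives both claims at once. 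Statement (3) is immediate from Case II of the analysis of \eqref{eigenval2}: if $\lambda\notin\mathcal N_3$ then $\cos(\sqrt\lambda L)\neq0$ and $\sigma=0$ is not an eigenvalue, nothing to prove.

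For the negative regime I would use Case III. When $\lambda\notin\mathcal N_{odd}$ (statement (5)) the eigenvalue is simple with $\phi(x)=C\sin(\beta_n x)$, $\beta_n=(2n+1)\pi/2L$, and again $\cos(\beta_n L)=0$ so $\sin(\beta_n L)=\pm1\neq0$, giving $\phi(L)\neq0$; and $\lambda\phi(L)+\phi_{xx}(L)=(\lambda-\beta_n^2)\phi(L)\neq0$ since $\sigma_n=\beta_n^2(\beta_n^2-\lambda)<0$ means $\beta_n^2\neq\lambda$ (indeed $\beta_n^2<\lambda$, but $\neq$ suffices). The only genuinely non-routine case is statement (4), $\lambda\in\mathcal N_{odd}$: the eigenvalue has multiplicity two with $\phi_{n,m}(x)=C_2\sin(\beta_n x)+C_4\sin(\beta_m x)$, where $\cos(\beta_n L)=\cos(\beta_m L)=0$ and $\beta_n\neq\beta_m$. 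Here $\phi_{n,m}(L)=\pm C_2\pm C_4$ can vanish for a suitable choice of $(C_2,C_4)\neq(0,0)$, so one cannot claim $\phi(L)\neq0$ outright; instead one must show that $\phi_{n,m}(L)$ and $\lambda\phi_{n,m}(L)+\phi_{n,m,xx}(L)$ cannot both vanish. Using $\phi_{n,m,xx}(L)=-\beta_n^2 C_2\sin(\beta_n L)-\beta_m^2 C_4\sin(\beta_m L)$, the pair of equations $\phi_{n,m}(L)=0$ and $\lambda\phi_{n,m}(L)+\phi_{n,m,xx}(L)=0$ together with $\sin(\beta_n L)=\pm1$, $\sin(\beta_m L)=\pm1$ becomes a linear system in $(C_2,C_4)$ whose determinant is, up to a nonzero sign, $\beta_m^2-\beta_n^2\neq0$ (because $\beta_n\neq\beta_m$ and both are positive). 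Hence the only solution is $C_2=C_4=0$, contradicting $C_2^2+C_4^2\neq0$; this is precisely the assertion that the two quantities cannot vanish simultaneously. The expected main obstacle is exactly this $2\times2$ nonvanishing-determinant argument in the double-eigenvalue case (and the analogous one for Lemma \ref{lem1}), though it is short; everything else is bookkeeping on explicit formulas already derived in Subsection \ref{prelim2}.
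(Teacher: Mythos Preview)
Your proof is correct and takes exactly the approach the paper intends: the paper's own proof is the single sentence ``Combining the spectral results of this section we conclude'', and you have simply written out the explicit verification from the formulas in Subsection~\ref{prelim2}. One small omission in your treatment of statement~(4): when $\lambda\in\mathcal{N}_{odd}$, not every negative eigenvalue of \eqref{eigenval2} is double---only the particular $\sigma=-\beta_n^2\beta_m^2$ with $\lambda=\beta_n^2+\beta_m^2$ has multiplicity two, while other negative eigenvalues (if any) remain simple with eigenfunction $C\sin(\beta_k x)$; for those the argument of (5) applies verbatim and gives the stronger conclusion that both quantities are individually nonzero. This is a one-line addendum and does not affect the validity of your approach.
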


\subsection{Spectral analysis}

In this section we aim to discuss some general properties of the following spectral problem
\be\label{spect2}
\left\{\begin{array}{ll}
 \la \phi^k_{xx}+\phi^k_{xxxx}=\sigma \phi^k, & x\in (0,L),\\[10pt]
\phi^i(0)=\phi^j(0), &  \quad  i,j \in \{1, \ldots, N\},\\[10pt]
\phi_x^k(L)=\phi_{xxx}^k(L)=0, & k\in \{1, \ldots, N\},\\[10pt]
\sum_{k=1}^{N}\phi^k_x(0)=0, & \\[10pt]
\phi^i_{xx}(0)=\phi^j_{xx}(0), & \quad  i, j \in \{1, \ldots, N\},\\[10pt]
\sum_{k=1}^{N}\phi^k_{xxx}(0)=0, & \\[10pt]
\end{array}\right.
\ee
which governs our control system \eqref{control}-(II).
This is equivalent to study the spectral properties of the fourth order operator
$$A:  D(A)\subset L^2(\Gamma)\rightarrow L^2(\Gamma)$$ given by
\be\label{op2}
\l\{\begin{array}{ll}
      A\phi^k=\la \phi^k_{xx}+\phi^k_{xxxx},\quad k \in \{1, \ldots, N\}  \\[10pt]
      D(A)=\l\{\begin{array}{ll} \phi=(\phi^k)_{k=1, N} \in H^4(\Gamma) \ | \ \phi_x^k(L)=\phi_{xxx}^k(L)=0, \quad k \in \{1, \ldots, N\}, \\[5pt] \phi^i(0)=\phi^j(0), \  \phi^i_{xx}(0)=\phi^j_{xx}(0), \quad i, j \in \{1, \ldots, N\},\\[5pt]
                 \sum_{k=1}^{N}\phi^k_x(0)=0, \          \sum_{k=1}^{N}\phi^k_{xxx}(0)=0,
               \end{array}\r\}.
    \end{array}\r.
\ee
Similar to the operator induced by the model \eqref{control}-(I) we obtain the following characterisation of $A+\mu I$.
 \begin{proposition}\label{p12}
 For any $\mu> \la^2/4$ the operator
 $$A+\mu I :D(A)\subset L^2(\Gamma)\rightarrow L^2(\Gamma),$$
 is a non-negative self-adjoint operator with compact inverse. In particular, it has a pure discrete spectrum formed by a sequence of nonnegative eigenvalues $\{\sigma_{\mu, n}\}_{n\geq 0}$ satisfying $\lim_{n\to \infty}\sigma_{\mu, n}=\infty$.  Moreover, up to a normalization, the corresponding eigenfunctions $\{\phi_{\mu, n}\}_{n\geq 0}$ form an orthonormal basis of $L^2(\Gamma)$.
 \end{proposition}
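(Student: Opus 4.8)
The plan is to follow, almost verbatim, the proof of Proposition \ref{p1}: the only structural change is that the boundary conditions at $x=L$ are now of Neumann type, $\phi^k_x(L)=\phi^k_{xxx}(L)=0$, instead of the Dirichlet type $\phi^k(L)=\phi^k_x(L)=0$, so one only has to recheck that every boundary contribution still vanishes. First I would note that $D(A)$, with the domain in \eqref{op2}, is dense in $L^2(\Gamma)$. Then, integrating by parts twice in $\sum_{k}\int_0^L(\lambda u^k_{xx}+u^k_{xxxx})v^k\dx$, I would verify that the boundary terms cancel: at $x=L$ this uses $u^k_x(L)=u^k_{xxx}(L)=0$ together with $v^k_x(L)=0$, while at $x=0$ it uses exactly the three coupling identities already exploited in Proposition \ref{p1} (that $v^k(0)$ and $u^k_{xx}(0)$ are independent of $k$, and $\sum_k u^k_x(0)=\sum_k u^k_{xxx}(0)=0$). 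This yields the same symmetric expression
\[
(Au,v)_{L^2(\Gamma)}=\sum_{k=1}^{N}\int_0^L\big(u^k_{xx}v^k_{xx}-\lambda u^k_x v^k_x\big)\dx ,\qquad \forall\, u,v\in D(A),
\]
so $A$ is symmetric.

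Next I would show that $A+\mu I$ is maximal monotone for $\mu>\lambda^2/4$. For monotonicity, one integration by parts gives $\sum_k\int_0^L u^k_{xx}u^k\dx=-\sum_k\int_0^L|u^k_x|^2\dx$ (the boundary term at $x=L$ vanishes because $u^k_x(L)=0$, and at $x=0$ because $\sum_k u^k_x(0)=0$ and $u^k(0)$ is $k$-independent), so the arithmetic--geometric mean inequality reproduces \eqref{pr2} and hence $((A+\mu I)u,u)_{L^2(\Gamma)}\ge(\mu-\lambda^2/4)\|u\|_{L^2(\Gamma)}^2\ge 0$. For maximality I would solve, via Lax--Milgram, the variational problem \eqref{var} on the Hilbert space
\[
V=\Big\{\phi=(\phi^k)_{k=1,N}\in H^2(\Gamma)\ \big|\ \phi^k_x(L)=0,\ \phi^i(0)=\phi^j(0),\ \textstyle\sum_{k=1}^{N}\phi^k_x(0)=0\Big\}
\]
endowed with the $H^2(\Gamma)$-norm, with $a(u,v)=\sum_k\int_0^L(u^k_{xx}v^k_{xx}-\lambda u^k_x v^k_x+\mu u^k v^k)\dx$; the remaining conditions $\phi^k_{xxx}(L)=0$, $\phi^i_{xx}(0)=\phi^j_{xx}(0)$, $\sum_k\phi^k_{xxx}(0)=0$ are natural boundary conditions and are not imposed on $V$. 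Symmetry and continuity of $a$ are immediate, and coercivity follows from the same $\delta$-perturbation estimate ($\mu>\lambda^2/(4-4\delta)$) as in Proposition \ref{p1}; classical elliptic regularity then upgrades the weak solution $u\in V$ to $u\in D(A)$.

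Finally, a symmetric maximal monotone operator is self-adjoint, and since $(A+\mu I)^{-1}$ maps $L^2(\Gamma)$ boundedly into $H^2(\Gamma)$, the compactness of the embedding $H^2(\Gamma)\hookrightarrow L^2(\Gamma)$ makes $(A+\mu I)^{-1}$ compact; the spectral theorem for compact self-adjoint operators then delivers the sequence of nonnegative eigenvalues $\{\sigma_{\mu,n}\}_{n\ge 0}$ with $\sigma_{\mu,n}\to\infty$ and an orthonormal basis of $L^2(\Gamma)$ consisting of eigenfunctions. I do not expect a genuine obstacle here: the argument is a line-by-line transcription of Proposition \ref{p1}. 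The only point that deserves attention is the bookkeeping of boundary terms — in the integrations by parts for symmetry and monotonicity, and in checking that the Neumann-type conditions at $x=L$ together with the coupling conditions at $x=0$ are precisely the natural conditions delivered by the weak formulation — and this goes through because replacing $\phi^k(L)=0$ by $\phi^k_{xxx}(L)=0$ still forces each boundary contribution to vanish.
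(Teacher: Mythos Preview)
Your proposal is correct and matches the paper's own treatment exactly: the paper explicitly declines to give a proof of Proposition~\ref{p12}, stating that the details are similar to those of Proposition~\ref{p1} for model~\eqref{control}-(I) and are therefore omitted. Your line-by-line transcription of that argument, with the boundary bookkeeping carefully rechecked for the Neumann-type conditions $\phi^k_x(L)=\phi^k_{xxx}(L)=0$ and the appropriate choice of the variational space $V$ (imposing only the essential conditions $\phi^k_x(L)=0$, $\phi^i(0)=\phi^j(0)$, $\sum_k\phi^k_x(0)=0$), is precisely what the paper intends and is entirely sound.
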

In consequence the spectrum of $A$ has the following properties.
 \begin{proposition}\label{p13}
The spectrum $\{\sigma_n\}_{n\geq 0}$ of problem \eqref{spect2} verifies
\begin{equation}\label{spectrumA2}
-\frac{\lambda^2}{4}<  \sigma_n, \ \forall n\geq 0, \quad \quad \sigma_n \rightarrow \infty, \textrm { as }n\rightarrow \infty.
\end{equation}
 \end{proposition}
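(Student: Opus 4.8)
The plan is to read Proposition~\ref{p13} off Proposition~\ref{p12}. Fix any $\mu>\la^2/4$. Shifting an operator by a scalar shifts its spectrum by the same scalar and preserves the ordering, so the eigenvalues of \eqref{spect2} are exactly $\sigma_n=\sigma_{\mu,n}-\mu$, where $\{\sigma_{\mu,n}\}_{n\ge 0}$ is the increasing sequence of nonnegative eigenvalues of $A+\mu I$ provided by Proposition~\ref{p12}. Hence $\sigma_n\to\infty$ and $\sigma_n\ge-\mu$ for every admissible $\mu$, so $\sigma_n\ge-\la^2/4$ for all $n$. The only remaining point is the \emph{strict} inequality, i.e. that $-\la^2/4$ is not an eigenvalue of the operator $A$ in \eqref{op2}; this is the exact analogue, for model~(II), of Proposition~\ref{propp}.

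To establish it I would argue by contradiction, mimicking the proof of Proposition~\ref{propp}. Assume $(-\la^2/4,\phi)$ is an eigenpair with $\phi=(\phi^k)_{k=1,N}\in D(A)$, $\phi\not\equiv 0$. Multiplying $(A+\tfrac{\la^2}{4})\phi=0$ by $\phi$ and integrating by parts over each edge, all boundary contributions cancel: at $x=L$ because the domain of $A$ forces $\phi^k_x(L)=\phi^k_{xxx}(L)=0$, and at $x=0$ because $\phi^i(0)$ and $\phi^i_{xx}(0)$ are independent of $i$ while $\sum_{k}\phi^k_x(0)=\sum_{k}\phi^k_{xxx}(0)=0$ (so, for instance, $\sum_k\phi^k_{xxx}(0)\phi^k(0)=\phi^1(0)\sum_k\phi^k_{xxx}(0)=0$, and similarly for the remaining two boundary terms). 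This yields
\[
0=\Big(\big(A+\tfrac{\la^2}{4}\big)\phi,\phi\Big)_{L^2(\Gamma)}=\sum_{k=1}^{N}\intx\Big(\phi^k_{xx}+\tfrac{\la}{2}\phi^k\Big)^2\dx ,
\]
so each component solves the second-order equation $\phi^k_{xx}+\tfrac{\la}{2}\phi^k=0$ on $(0,L)$, and therefore $\phi^k(x)=A_k\cos(\omega x)+B_k\sin(\omega x)$ with $\omega:=\sqrt{\la/2}$.

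The main obstacle is the final step, concluding $\phi\equiv 0$, and this is exactly where model~(II) departs from model~(I). In Proposition~\ref{propp} the external conditions $\phi^k(L)=\phi^k_x(L)=0$ are a complete set of Cauchy data for the reduced second-order equation and immediately kill $\phi^k$ on one edge, whereas here the identity $\phi^k_{xxx}=-\tfrac{\la}{2}\phi^k_x$ makes $\phi^k_{xxx}(L)=0$ a consequence of $\phi^k_x(L)=0$, so the external vertex yields only the single scalar relation $-A_k\sin(\omega L)+B_k\cos(\omega L)=0$ for each $k$. One must therefore also exploit the internal-vertex relations: continuity of $\phi^k(0)$ (equivalently of $\phi^k_{xx}(0)$) forces all $A_k$ to coincide, and $\sum_k\phi^k_x(0)=0$ forces $\sum_kB_k=0$. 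Separating the cases $\cos(\omega L)\neq 0$ and $\cos(\omega L)=0$ and combining these relations with the external one reduces the problem to checking that $A_k=B_k=0$ for all $k$, which closes the contradiction. I expect the bookkeeping around the resonant configurations, namely the values of $\la$ with $\sqrt{\la/2}\,L\in\tfrac{\pi}{2}\nn$ for which $\cos(\omega x)$ or $\sin(\omega x)$ comes closest to satisfying every constraint, to be the delicate part of the argument; once these are handled, the reasoning is of the same explicit, ODE-based nature already used for \eqref{eigenval} and \eqref{eigenval2} in Sections~\ref{prelim1}--\ref{prelim2}.
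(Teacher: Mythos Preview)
Your approach is exactly what the paper suggests: it omits the proof, declaring it ``similar'' to Proposition~\ref{propp}, and you have carried out that analogy, correctly isolating the new difficulty that the conditions $\phi^k_x(L)=\phi^k_{xxx}(L)=0$ collapse to a single scalar relation on each edge once the equation has been reduced to second order.

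The gap is that the resonant cases you flag as ``delicate'' cannot be closed. Take $\omega L=m\pi$ (i.e.\ $\la=2m^2\pi^2/L^2$): then $\phi^k(x)=\cos(\omega x)$ for every $k$ satisfies all of your constraints --- all $A_k$ equal, all $B_k=0$, and $-A_k\sin(\omega L)+B_k\cos(\omega L)=0$ trivially --- and one checks directly that $\phi=(\cos(\omega\cdot),\dots,\cos(\omega\cdot))$ lies in the domain \eqref{op2} and is an eigenfunction of the operator with eigenvalue $-\la^2/4$. Likewise, when $\omega L=(2m{+}1)\pi/2$ the vectors $\phi^k=B_k\sin(\omega x)$ with $\sum_k B_k=0$ are nontrivial eigenfunctions. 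Hence $-\la^2/4$ \emph{is} an eigenvalue of \eqref{spect2} for every $\la\in\{k^2\pi^2/(2L^2):k\ge 1\}$, and the strict inequality in \eqref{spectrumA2} fails there. The analogy with model~(I) that both you and the paper invoke breaks down precisely where you anticipated; since the controllability proofs rely only on the large-eigenvalue asymptotics of Lemma~\ref{l1}, this does not damage the main theorems, but Proposition~\ref{p13} as stated requires either a weak inequality or the exclusion of these values of $\la$.
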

The details of the proof of Propositions \ref{p12} and \ref{p13} are  similar as in  model \eqref{control}-(I), therefore they will be omitted here.
\subsection{Qualitative properties of the eigenvalues}\label{spectqual2}

 In this section we apply the preliminary results shown in subsections \ref{prelim1} and \ref{prelim2} to obtain useful spectral properties of system \eqref{spect2} which will play a crucial role for proving the null-controllability of problem \eqref{control}-(II).  
% Namely,  we obtain the asymptotic behavior of the eigenvalues of system \eqref{spect2} and useful asymptotic properties of the corresponding eigenfunctions.

For any fixed $\lambda>0$  let us first consider the following two eigenvalue problems which have been already analyzed in subsections \ref{prelim1} and \ref{prelim2}:
\begin{equation}\label{eign.1.tree2}
E_1:\left\{
\begin{array}{ll}
\lambda \Psi_{xx}+\Psi_{xxxx}=\sigma \Psi, & x\in (0,L),\\[10pt]
 \Psi_x(0)=0, \ \Psi_{xxx}(0)=0 , &\\[10pt]
 \Psi_x(L)=\Psi_{xxx}(L)=0, &\\[10pt]
\end{array}
\right.
\end{equation}
and
\begin{equation}\label{eign22}
E_2:\left\{
\begin{array}{ll}
\lambda \Phi_{xx}+\Phi_{xxxx}=\sigma \Phi, & x\in (0,L),\\[10pt]
 \Phi(0)=0, \ \Phi_{xx}(0)=0 , &\\[10pt]
 \Phi_x(L)=\Phi_{xxx}(L)=0, &\\[10pt]
\end{array}
\right.
\end{equation}
respectively. Recall that Lemma \ref{lem1} applies to \eqref{eign.1.tree2} whereas Lemma \ref{lem2} applies to \eqref{eign22}.
Coming back to our spectral problem \eqref{spect2},  we introduce the functions
\begin{equation}\label{functie_suma2}
S:=\sum_{k=1}^{N} \phi^k,
\end{equation}
\begin{equation}\label{functie_diferenta2}
D^k:=\phi^k - \frac{S}{N},  \quad k\in \{1, \ldots, N\}.
\end{equation}
The motivation for analyzing systems \eqref{eign.1.tree2} and \eqref{eign22} is due to the fact that $S$ verifies \eqref{eign.1.tree2} whereas $D^k$ satisfies \eqref{eign22} for all $k\in \{1, \ldots, N\}$.

Next we state and prove some preliminary lemmas.

\begin{lemma}\label{not.common2}
	For any $\lambda>0$ the eigenvalue problems \eqref{eign.1.tree2} and \eqref{eign22} have no any common positive eigenvalue.
The  value $\sigma=0$ is a common eigenvalue if and only if  $\lambda$ belongs to $\mathcal{N}_3$.
   Moreover, problems \eqref{eign.1.tree2} and \eqref{eign22} have no  common negative eigenvalues if and only if $\lambda \not \in \mathcal{N}_4$.
\end{lemma}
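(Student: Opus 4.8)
The plan is to deduce all three assertions directly from the explicit descriptions of the spectra of $E_1$ and $E_2$ obtained in Sections \ref{prelim1} and \ref{prelim2}, since $E_1$ is exactly problem \eqref{eigenval} and $E_2$ is exactly problem \eqref{eigenval2}. First I would record the elementary identity: setting $g(t):=t^4-\lambda t^2$, one has
\[
g(t)-g(s)=(t^2-s^2)(t^2+s^2-\lambda),
\]
so that for $t,s>0$ the equality $g(t)=g(s)$ forces either $t=s$ or $t^2+s^2=\lambda$. Recall from Sections \ref{prelim1}--\ref{prelim2} that the positive eigenvalues of $E_1$ are the numbers $g(n\pi/L)$ with $n\in\nn$, $n\pi/L>\sqrt{\lambda}$, its negative eigenvalues are $g(n\pi/L)$ with $1\le n$, $n\pi/L<\sqrt{\lambda}$, and $0$ always belongs to its spectrum (a nonzero constant satisfies the equation with $\sigma=0$ and all four boundary conditions of \eqref{eign.1.tree2}); whereas the positive eigenvalues of $E_2$ are $g((2m{+}1)\pi/2L)$ with $(2m{+}1)\pi/2L>\sqrt{\lambda}$, its negative eigenvalues are $g((2m{+}1)\pi/2L)$ with $m\ge 0$, $(2m{+}1)\pi/2L<\sqrt{\lambda}$, and $0$ is an eigenvalue of $E_2$ if and only if $\cos(\sqrt{\lambda}L)=0$, i.e.\ $\lambda\in\mathcal{N}_3$.

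For the positive spectra, suppose $\sigma>0$ were common, say $\sigma=g(n\pi/L)=g((2m{+}1)\pi/2L)$ with both arguments $>\sqrt{\lambda}$. Then the sum of their squares exceeds $2\lambda$, so the alternative $t^2+s^2=\lambda$ is impossible and the identity above forces $n\pi/L=(2m{+}1)\pi/2L$, i.e.\ $2n=2m+1$, which cannot hold. Hence $E_1$ and $E_2$ have no common positive eigenvalue. For $\sigma=0$: it lies in the spectrum of $E_1$ for every $\lambda>0$ and in the spectrum of $E_2$ exactly when $\lambda\in\mathcal{N}_3$; therefore $0$ is a common eigenvalue if and only if $\lambda\in\mathcal{N}_3$.

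For the negative spectra I would prove both implications. If $\sigma<0$ is common, then $\sigma=g(n\pi/L)=g((2m{+}1)\pi/2L)$ with $n\ge 1$, $m\ge 0$ and both arguments $<\sqrt{\lambda}$; the case $n\pi/L=(2m{+}1)\pi/2L$ is again excluded by parity, so $(n\pi/L)^2+((2m{+}1)\pi/2L)^2=\lambda$, which rewrites as $\lambda=\frac{\pi^2}{4L^2}\big((2n)^2+(2m{+}1)^2\big)\in\mathcal{N}_4$. Conversely, if $\lambda=\frac{\pi^2}{4L^2}\big((2m)^2+(2n{+}1)^2\big)$ with $m\ge 1$, $n\ge 0$, put $t:=m\pi/L$ and $s:=(2n{+}1)\pi/2L$; then $t^2+s^2=\lambda$, whence $0<t,s<\sqrt{\lambda}$ and $t\ne s$, and $g(t)=t^2(t^2-\lambda)=-t^2s^2=g(s)=:\sigma$, a negative number (lying strictly above $-\lambda^2/4$ because $t\ne s$). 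Since $t=m\pi/L$ with $m$ a positive integer and $t<\sqrt{\lambda}$, Case III of Section \ref{prelim1} shows $\sigma$ is a negative eigenvalue of $E_1$; since $s=(2n{+}1)\pi/2L<\sqrt{\lambda}$, Case III of Section \ref{prelim2} shows $\sigma$ is a negative eigenvalue of $E_2$; so $\sigma$ is common. Combining the two implications, $E_1$ and $E_2$ have a common negative eigenvalue if and only if $\lambda\in\mathcal{N}_4$, which is the desired statement.

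The computations here are short, and the only point that requires care — the one I would regard as the real content of the proof — is ensuring that the frequency parametrizations invoked above are exhaustive: that every negative (resp.\ positive) eigenvalue of $E_i$ has the displayed form, and conversely that each admissible frequency with $t<\sqrt{\lambda}$ (resp.\ $t>\sqrt{\lambda}$) produces a genuine eigenvalue lying strictly above $-\lambda^2/4$. This is precisely what the case analysis of Sections \ref{prelim1}--\ref{prelim2} supplies; once it is in place, the parity obstruction $2n\ne 2m+1$ together with the factorization $g(t)-g(s)=(t^2-s^2)(t^2+s^2-\lambda)$ does all the remaining work.
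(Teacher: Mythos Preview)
Your proof is correct and follows essentially the same route as the paper's: both draw directly on the spectral computations of Sections \ref{prelim1}--\ref{prelim2}, and your factorization $g(t)-g(s)=(t^2-s^2)(t^2+s^2-\lambda)$ packages exactly the relation $\beta^2+\gamma^2=\lambda$ from \eqref{connection} that the paper invokes via the trigonometric compatibility conditions $\sin(\beta L)\sin(\gamma L)=\cos(\beta L)\cos(\gamma L)=0$. The only cosmetic difference is that you spell out the converse direction for $\lambda\in\mathcal{N}_4$ more explicitly than the paper does.
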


\begin{proof}
Assume that $\sigma>0$ is a common eigenvalue for \eqref{eign.1.tree2} and \eqref{eign22}. Then, according to the precise analysis in subsections \ref{prelim1} and \ref{prelim2} we must necessary have
$$\sin(\beta L)=\cos (\beta L)=0, $$
 which never may happen.
 	Again, in view of the subsections above we obtain  that $\sigma=0$ is an eigenvalue of \eqref{eign.1.tree2} but it cannot be an eigenvalue of \eqref{eign22} unless $\lambda \in \mathcal{N}_3$.
 Moreover, if some $\sigma<0$ was a common eigenvalue we should have
 $$\sin(\beta L)\sin (\gamma L)=\cos (\beta L)\cos (\gamma L)=0,$$
 which is equivalent to the alternatives
 $$\sin(\beta L)=\cos(\gamma L)=0 \textrm{ or }\sin (\gamma L)=\cos (\beta L)=0.$$
 This is
  equivalent with $\lambda \in \mathcal{N}_{4}$.
\end{proof}

In consequence when $\lambda\notin\mathcal{N}_{mixt}=\mathcal{N}_3\cup \mathcal{N}_4$ we obtain that there is no common eigenvalue of systems  \eqref{eign.1.tree2} and \eqref{eign22}. This allows us to give a precise characterisation of 
the eigenvalues of system \eqref{spect2}.

\begin{lemma}\label{eigen_partition3}
Assume $\lambda\not \in \mathcal{N}_{mixt}$ then
\be\label{equiparti}
\sigma_p(A)=\sigma_p(E_1)\cup \sigma_p(E_2); \quad \sigma_p(E_1)\cap \sigma_p (E_2)=\emptyset,
\ee
where $\sigma_p(A)$, $\sigma_p(E_1)$ and $\sigma_p(E_2)$  denote the set of eigenvalues for the spectral problems \eqref{spect2}, \eqref{eign.1.tree2} and \eqref{eign22}, respectively.  In addition, we can  precisely describe the eigenpairs of \eqref{spect2} as follows.
%\begin{enumerate}[(i).]
 If $(\sigma, \phi=(\phi^k)_{k=1,N})$ is an eigenpair of \eqref{spect2} we have the following possibilities:
    \begin{enumerate}
    \item If $\sigma>0$ then we have the alternative:
    \begin{enumerate}
    \item\label{a} either $\sigma$ is an  eigenvalue of \eqref{eign.1.tree2} and there exists $\Psi$ an eigenfunction of $\sigma$ in \eqref{eign.1.tree2} such that
        $$\phi=(\Psi, \ldots, \Psi).$$
     In this case $\sigma$ is a simple eigenvalue of \eqref{spect2} (i.e. $m(\sigma)=1$) and a basis for its eigenpspace is given
     by
     $$\mathcal{B}_\sigma=\{(\Psi, \ldots \Psi)\}.$$

    \item\label{b} or $\sigma$ is an eigenvalue of \eqref{eign22}, situation in which  there exists an eigenpair   $(\sigma, \Phi)$ of \eqref{eign22} and   a nontrivial  vector  $\overrightarrow{C}=(c_k)_{k=1, N}$ with $\sum_{k=1}^{N}c_k=0$ such that
        $$\phi=\overrightarrow{C} \Phi.$$ In this case $\sigma$, as an eigenvalue of \eqref{spect2}, has multiplicity $m(\sigma)=N-1$. A  basis for the eigenspace of $\sigma$ is given by
        $$\mathcal{B}_{\sigma}=\{\Phi{\bf e}_{l}-\Phi{\bf e}_{l+1}\}_{l=1, N-1}.$$
  \end{enumerate}
  \item If $\sigma =0$ then $\sigma $ is an eigenvalue of \eqref{eign.1.tree2}. We distinguish two cases:
      \begin{itemize}
           \item If $\lambda\in \mathcal{N}_2$ then $\sigma$ has multiplicity $m(\sigma)=2$ and its eigenspace in \eqref{spect2} is induced by the basis
            $$\mathcal{B}_\sigma=\{(1, \ldots, 1), \quad (\Psi, \ldots, \Psi)\},$$
            where $1$ and $\Psi$ are two linear independent eigenfunctions of $\sigma$ in \eqref{eign.1.tree2}.
        \item If $\lambda\not \in \mathcal{N}_2$ then $\sigma$  has multiplicity $m(\sigma)=1$ and a basis for its eigenspace in \eqref{spect2} is
            $$\mathcal{B}_\sigma=\{(1, \ldots, 1)\}.$$
      \end{itemize}
  \item If $\sigma<0$ then we have the alternative:
  \begin{enumerate}
    \item either $\sigma$ is eigenvalue of \eqref{eign.1.tree2}, situation in which we distinguish two cases: 
    \begin{itemize}
    	\item if $\lambda\in \mathcal{N}_1$,
     $\sigma$ as an eigenvalue of \eqref{spect2} has multiplicity $m(\sigma)=2$ and a basis for its  eigenspace   is given by
        $$\mathcal{B}_\sigma=\{(\Psi, \ldots, \Psi), \quad (\tilde{\Psi}, \ldots, \tilde{\Psi})\},$$
        where $\Psi$ and $\tilde{\Psi}$ are two linear independent eigenfunctions of $\sigma$ in \eqref{eign.1.tree2}.
        \item if $\lambda\notin \mathcal{N}_1$ then $\sigma$ is simple and the basis of its eigenspace is given by      $$\mathcal{B}_\sigma=\{(\Psi, \ldots \Psi)\},$$
where $\Psi$ is an eigenfunction of $\sigma$ in \eqref{eign.1.tree2}.
            \end{itemize}
    \item or $\sigma$ is an eigenvalue of \eqref{eign22}, situation in which we distinguish two cases:
        \begin{itemize}
    \item If $\lambda \in \mathcal{N}_{odd}$ then there exists two linear independent eigenfunctions $\Phi, \tilde{\Phi}$ of $\sigma$ in \eqref{eign22} and there exists two  vectors $\overrightarrow{C}=(c_k)_{k= 1, N}$, $\overrightarrow{D}= (d_k)_{k=1, N}$ such that $\sum_{k=1}^{N}c_k=\sum_{k=1}^{N}d_k=0$ and
        $$\phi=\overrightarrow{C} \Phi+ \overrightarrow{D} \tilde{\Phi}.$$
        In this case $\sigma$ has multiplicity $m(\sigma)=2(N-1)$ and a basis for its eigenspace is given by
        $$\mathcal{B}_\sigma=\{\Phi{\bf e}_{l}-\Phi{\bf e}_{l+1},\quad \tilde{\Phi}{\bf e}_{l}-\tilde{\Phi}{\bf e}_{l+1} \}_{l=1, N-1}.$$
          \item If $\lambda\not \in \mathcal{N}_{odd}$  there exists an eigenpair $(\sigma, \Phi)$ of \eqref{eign22} and  there exists a nontrivial  vector $\overrightarrow{C}=(c_k)_{k=1, N}$ with $\sum_{k=1}^{N}c_k=0$ such that
        $$\phi=\overrightarrow{C} \Phi.$$ In this case  $\sigma$ has multiplicity $m(\sigma)=N-1$ and a basis for its eigenspace is given by
        $$\mathcal{B}_{\sigma}=\{\Phi{\bf e}_{l}-\Phi{\bf e}_{l+1}\}_{l=1, N-1}.$$
        \end{itemize}
  \end{enumerate}
  \end{enumerate}
%\item\label{doi} Conversely, if $(\sigma, \Psi)$ is an eigenpair of \eqref{eign.1.tree2} then $(\sigma, \phi=(\Psi, \ldots, \Psi, \Psi))$ is an eigenpair of \eqref{spect2} whereas if $(\sigma, \Phi)$ is an eigenpair of \eqref{eign22} then, for any constants $c_k$ (not all vanishing) such that $\sum_{k=1}^{N}c_k=0$, it holds that $(\sigma, \phi=(c_1\Phi, \ldots, c_N\Phi))$ is also eigenpair for \eqref{spect2}.
%\end{enumerate}
\end{lemma}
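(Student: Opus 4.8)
The plan is to reduce the vector eigenvalue problem \eqref{spect2} to the two scalar problems $E_1$ and $E_2$ by means of the splitting $\phi^k=S/N+D^k$ introduced in \eqref{functie_suma2}--\eqref{functie_diferenta2}, and then to read off the multiplicities and the bases directly from the explicit spectra computed in Subsections \ref{prelim1} and \ref{prelim2}.

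First I would record the elementary observation that if $(\sigma,\phi)$ is an eigenpair of \eqref{spect2}, then $S=\sum_{k}\phi^k$ solves \eqref{eign.1.tree2}: the coupling conditions $\sum_k\phi^k_x(0)=\sum_k\phi^k_{xxx}(0)=0$ become exactly the Neumann conditions for $S$ at $x=0$, while $\phi^k_x(L)=\phi^k_{xxx}(L)=0$ survive the summation; and each $D^k=\phi^k-S/N$ solves \eqref{eign22}, since $\phi^i(0)=\phi^j(0)$ and $\phi^i_{xx}(0)=\phi^j_{xx}(0)$ force $D^k(0)=D^k_{xx}(0)=0$. Hence $S$ is either identically zero or an $E_1$-eigenfunction for $\sigma$, each $D^k$ is either identically zero or an $E_2$-eigenfunction for $\sigma$, and $\sum_k D^k\equiv 0$ by construction. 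Because $\lambda\notin\mathcal{N}_{mixt}$, Lemma \ref{not.common2} gives $\sigma_p(E_1)\cap\sigma_p(E_2)=\emptyset$, so the alternatives ``$S\not\equiv 0$'' and ``some $D^k\not\equiv 0$'' are mutually exclusive. This dichotomy yields \eqref{equiparti}: if $\sigma\in\sigma_p(A)$ with $\sigma\notin\sigma_p(E_2)$, then all $D^k\equiv 0$, so $\phi=(S/N,\dots,S/N)$ with $S\not\equiv 0$ an $E_1$-eigenfunction; if $\sigma\notin\sigma_p(E_1)$, then $S\equiv 0$, so $\phi^k=D^k$ and at least one $D^k$ is an $E_2$-eigenfunction. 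The reverse inclusion $\sigma_p(E_1)\cup\sigma_p(E_2)\subseteq\sigma_p(A)$ is immediate, since $(\Psi,\dots,\Psi)$ and $\overrightarrow{C}\Phi$ (for any $\overrightarrow{C}$ with $\sum_k c_k=0$) are easily checked to satisfy all the boundary conditions in \eqref{spect2}.

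Next I would compute the eigenspace of each $\sigma\in\sigma_p(A)$. In the $E_1$-branch the map $\Psi\mapsto(\Psi,\dots,\Psi)$ is a linear isomorphism onto the eigenspace of $\sigma$ for \eqref{spect2}, so $m(\sigma)$ equals the multiplicity of $\sigma$ for $E_1$; from the explicit description in Subsection \ref{prelim1} this is $1$ when $\sigma>0$, it is $1$ or $2$ when $\sigma=0$ according to whether $\lambda\notin\mathcal{N}_2$ or $\lambda\in\mathcal{N}_2$, and it is $1$ or $2$ when $\sigma<0$ according to whether $\lambda\notin\mathcal{N}_1$ or $\lambda\in\mathcal{N}_1$. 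Here one uses that $\sigma=0$ always lies in the $E_1$-branch, because $\lambda\notin\mathcal{N}_3$ implies $0\notin\sigma_p(E_2)$ by Subsection \ref{prelim2}. In the $E_2$-branch, let $\{\Phi\}$ (if $\sigma$ is simple for $E_2$) or $\{\Phi,\tilde\Phi\}$ (if $\sigma$ is double for $E_2$, which by Subsection \ref{prelim2} occurs only for $\sigma<0$ with $\lambda\in\mathcal{N}_{odd}$) be a basis of the $E_2$-eigenspace of $\sigma$. Then every $\phi$ in this branch is of the form $\overrightarrow{C}\Phi$, respectively $\overrightarrow{C}\Phi+\overrightarrow{D}\tilde\Phi$; since $S=(\sum_k c_k)\Phi+(\sum_k d_k)\tilde\Phi$ must vanish, linear independence of $\Phi,\tilde\Phi$ forces $\sum_k c_k=0$, respectively $\sum_k c_k=\sum_k d_k=0$. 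As the assignment $(\overrightarrow{C},\overrightarrow{D})\mapsto\overrightarrow{C}\Phi+\overrightarrow{D}\tilde\Phi$ is injective (again by componentwise linear independence of $\Phi,\tilde\Phi$), the eigenspace is isomorphic to $\{\overrightarrow{C}:\sum_k c_k=0\}$, of dimension $N-1$, in the simple case, and to its Cartesian square, of dimension $2(N-1)$, in the double case; the vectors $\Phi{\bf e}_l-\Phi{\bf e}_{l+1}$, together with $\tilde\Phi{\bf e}_l-\tilde\Phi{\bf e}_{l+1}$ in the double case, $l=1,\dots,N-1$, are then readily seen to be linearly independent and to span, which gives the stated bases.

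The bookkeeping is routine once this reduction is in place; the step I expect to require the most care is the dimension count in the doubly degenerate case $\sigma<0$, $\lambda\in\mathcal{N}_{odd}$, where one must argue that the single constraint $S\equiv 0$ splits into the two independent conditions $\sum_k c_k=0$ and $\sum_k d_k=0$, so that the full multiplicity $2(N-1)$ is attained rather than something smaller; this hinges precisely on the linear independence of $\Phi$ and $\tilde\Phi$ and on the disjointness $\sigma_p(E_1)\cap\sigma_p(E_2)=\emptyset$ guaranteed by $\lambda\notin\mathcal{N}_{mixt}$.
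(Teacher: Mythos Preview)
Your proposal is correct and follows essentially the same approach as the paper: reduce via the splitting $\phi^k=S/N+D^k$, invoke Lemma~\ref{not.common2} for the disjointness, and read off multiplicities from the scalar computations in Subsections~\ref{prelim1}--\ref{prelim2}. Your write-up is in fact more explicit than the paper's, which after establishing \eqref{equiparti} simply states that the remaining case analysis ``follows from the preliminary analysis in sections \ref{prelim1} and \ref{prelim2}''; the dimension counts and the argument that $S\equiv 0$ forces $\sum_k c_k=\sum_k d_k=0$ separately in the $\lambda\in\mathcal{N}_{odd}$ case are left implicit there but spelled out by you.
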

\begin{remark}
  The analysis in the preliminary sections \ref{prelim1} and \ref{prelim2} allows us to say more about the eigenfunctions $\Psi, \tilde{\Psi}, \Phi, \tilde{\Phi}$ in Lemma \ref{eigen_partition3} since they are actually sinus or cosinus type functions.
\end{remark}
\begin{proof}[Proof of Lemma \ref{eigen_partition3}]
First we prove the partition of the eigenvalues \eqref{equiparti}.  The fact that $\sigma(E_1)\cap \sigma(E_2)=\emptyset$ is a consequence of Lemma \ref{not.common2}. 
Now, assume that $(\sigma, \phi=(\phi^k)_{k=1, N})$ is an eigenpair of \eqref{spect2}. Then $\sigma$ verifies \eqref{eign.1.tree2} for $\Psi=S$ in \eqref{functie_suma2}. In addition, $\sigma$ verifies \eqref{eign22} for any $\Phi=D^k$ in \eqref{functie_diferenta2}.
If $S\not\equiv 0$ then ($\sigma$, $S$) is an eigenpair of \eqref{eign.1.tree2}. Otherwise, if $S\equiv 0$, according to Lemma \ref{not.common2} we have $D^k=\phi^k$ for all $k\in \{1, \ldots, N\}$. Consequently, there exists $k_0 \in \{1, \ldots, N\}$ such that $\phi^{k_0}\neq 0$ and therefore $(\sigma, \phi^{k_0})$ is an eigenpair for \eqref{eign22}.

Conversely,  let $(\sigma, \Psi)$ be an eigenpair for \eqref{eign.1.tree2}. Then $(\sigma, \phi=(\Psi, \ldots, \Psi, \Psi))$ is an eigenpair for \eqref{spect2}. Let $(\sigma, \Phi)$ be an eigenpair for \eqref{eign22}. Then $(\sigma, \phi=(0, \ldots,0, -\Phi, \Phi))$ is an eigenpair for \eqref{spect2}, which completes the first part of Lemma \ref{eigen_partition3}.

The rest of the proof follows in each one of the cases $\sigma>0$, $\sigma=0$ and $\sigma<0$ from the preliminary analysis in sections \ref{prelim1} and \ref{prelim2}.
\end{proof}
The previous results allow us to conclude the following lemma.
\begin{lemma}\label{neqzero2}
For any $\la \not \in \mathcal{N}_{mixt}$  any eigenfunction $\phi=(\phi^k)_{k=1, N}$ of $A$ in \eqref{op2} satisfies
$\phi^k(L) \neq 0$ for at least two indexes, or $\lambda \phi^k(L)+\phi_{xx}^k(L)\neq 0$ for at least two indexes $k\in \{1, \ldots, N\}$.
\end{lemma}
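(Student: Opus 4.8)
The plan is to reduce the statement to the structural classification of the eigenfunctions of $A$ given in Lemma \ref{eigen_partition3}, and then to feed each case into the pointwise non-degeneracy information at $x=L$ collected in Lemma \ref{lem1} (for $E_1$) and Lemma \ref{lem2} (for $E_2$). Since $\lambda\notin\mathcal{N}_{mixt}=\mathcal{N}_3\cup\mathcal{N}_4$, Lemma \ref{eigen_partition3} applies and tells us that every eigenfunction $\phi=(\phi^k)_{k=1,N}$ of \eqref{spect2} has exactly one of two shapes: a \emph{symmetric} one, $\phi=(\Xi,\dots,\Xi)$ with $\Xi$ an eigenfunction of $E_1$ (this absorbs every eigenvalue $\sigma\geq 0$ or $\sigma<0$ coming from $E_1$, including the degenerate subcases $\lambda\in\mathcal{N}_2$ and $\lambda\in\mathcal{N}_1$, because any linear combination of $E_1$-eigenfunctions sharing the eigenvalue $\sigma$ is again an eigenfunction of $E_1$); or a \emph{mean-zero} one, in which $\sum_{k=1}^N\phi^k\equiv 0$ and each component $\phi^k$ is a linear combination of eigenfunctions of $E_2$ attached to the eigenvalue $\sigma$, so that $\phi^k$ is either $\equiv 0$ or itself an eigenfunction of $E_2$; moreover $\lambda\notin\mathcal{N}_3$ forces $\sigma\neq 0$ in this second shape.

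First I would dispatch the symmetric case. Here $\Xi\not\equiv 0$ is an eigenfunction of $E_1$, and Lemma \ref{lem1} — whose items (1)--(5) exhaust the possibilities $\sigma>0$, $\sigma=0$ with $\lambda$ in or out of $\mathcal{N}_2$, and $\sigma<0$ with $\lambda$ in or out of $\mathcal{N}_1$ — guarantees that $\Xi(L)$ and $\lambda\Xi(L)+\Xi_{xx}(L)$ do not vanish simultaneously. Whichever of these two scalars is nonzero is nonzero for all $N$ identical components of $\phi$, and $N\geq 2$, so the conclusion holds at once.

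The substance is in the mean-zero case. Put $J=\{k:\phi^k\not\equiv 0\}$. From $\sum_k\phi^k\equiv 0$ and $\phi\not\equiv 0$, a single surviving component would have to vanish by itself, which is absurd, so $|J|\geq 2$. For $k\in J$, $\phi^k$ is an eigenfunction of $E_2$, so Lemma \ref{lem2} applies; since $\sigma\neq 0$ here, we are in item (1) ($\sigma>0$), item (5) ($\sigma<0$, $\lambda\notin\mathcal{N}_{odd}$) or item (4) ($\sigma<0$, $\lambda\in\mathcal{N}_{odd}$). In the first two, $\phi^k(L)\neq 0$ for every $k\in J$, and $|J|\geq 2$ finishes it. The remaining case $\sigma<0$, $\lambda\in\mathcal{N}_{odd}$ is the only one that needs care: there the $\sigma$-eigenspace of $E_2$ is two-dimensional, spanned by $\Phi$ and $\tilde\Phi$, so $\phi^k=c_k\Phi+d_k\tilde\Phi$, and $\sum_k\phi^k\equiv 0$ together with the linear independence of $\Phi,\tilde\Phi$ gives $\sum_kc_k=\sum_kd_k=0$ (in particular $c_m=d_m=0$ whenever $m\notin J$). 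Set $I_1=\{k:\phi^k(L)\neq 0\}$ and $I_2=\{k:\lambda\phi^k(L)+\phi^k_{xx}(L)\neq 0\}$; Lemma \ref{lem2}(4) says $I_1\cup I_2=J$. Assume, for contradiction, that $|I_1|\leq 1$ and $|I_2|\leq 1$. Then $|J|\leq 2$, hence $J=\{i,j\}$, and the vanishing of $\sum_kc_k$ and $\sum_kd_k$ forces $(c_j,d_j)=-(c_i,d_i)$, i.e.\ $\phi^j=-\phi^i$. Since $\phi^i\not\equiv 0$, at least one of $\phi^i(L)$, $\lambda\phi^i(L)+\phi^i_{xx}(L)$ is nonzero, and the corresponding scalar is then also nonzero for $\phi^j$; this gives $|I_1|=2$ or $|I_2|=2$, a contradiction. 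Hence $|I_1|\geq 2$ or $|I_2|\geq 2$, which is the asserted dichotomy.

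I expect this last configuration to be the only real obstacle: one has to exclude the scenario in which one surviving component is non-degenerate only because $\phi^k(L)\neq 0$ and another only because $\lambda\phi^k(L)+\phi^k_{xx}(L)\neq 0$, so that each of $I_1$ and $I_2$ stays of size one even though two components survive. It is precisely the mean-zero constraint — which, once exactly two components remain, forces them to be opposite to each other and hence to share the same type of non-degeneracy — that closes this gap; making that observation precise is the only step that is not routine bookkeeping with Lemmas \ref{lem1} and \ref{lem2}.
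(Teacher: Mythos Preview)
Your proof is correct and follows the same overall scheme as the paper: split according to whether $S=\sum_k\phi^k$ vanishes identically, invoke Lemma~\ref{lem1} for the symmetric shape and Lemma~\ref{lem2} for the mean-zero shape.

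The one noteworthy difference is in the mean-zero case. You further subdivide according to the sign of $\sigma$ and whether $\lambda\in\mathcal{N}_{odd}$, and in the two-dimensional $E_2$-eigenspace situation you argue via the coefficients $(c_k,d_k)$ and the constraint $|J|=2$. The paper avoids this subdivision entirely with a single linear trick: the negation of the conclusion says $\phi^k(L)=0$ for at least $N-1$ indices and $\lambda\phi^k(L)+\phi^k_{xx}(L)=0$ for at least $N-1$ (possibly different) indices; since $S\equiv 0$ forces $\sum_k\phi^k(L)=0$ and $\sum_k\phi^k_{xx}(L)=0$, the single remaining index in each family is also forced to vanish, so $\phi^k(L)=\phi^k_{xx}(L)=0$ for \emph{all} $k$. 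Picking any $k_0$ with $\phi^{k_0}\not\equiv 0$ then contradicts Lemma~\ref{lem2} directly. This works uniformly in $\sigma$ and in the dimension of the $E_2$-eigenspace, so the step you identify as ``the only one that is not routine bookkeeping'' simply disappears. Your argument has the merit of making explicit why $\lambda\in\mathcal{N}_{odd}$ looks special; the paper's shortcut shows that, for this lemma, it is not.
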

\proof
With the same notations as above we have that $S$ and $D^k$, $k\in \{1, \ldots, N\}$, satisfy \eqref{eign.1.tree2} and \eqref{eign22}. We distinguish two cases as follows.

  \textit{The case $S\not\equiv 0$}. Using  the first part of Lemma \ref{eigen_partition3} we must have $D^k\equiv 0$, for all $k\in \{1, \ldots, N\}$. This means that $\phi = (S/N, \dots,S/N)$ where $S$ is an eigenfunction of problem \eqref{eign.1.tree2}. So, from Lemma  \ref{lem1} it holds that $S(L)\neq 0$ or $\lambda S(L)+S_{xx}(L)\neq 0$. This gives the desired result.

\textit{The case $S\equiv 0$}. In this case we have $D^k=\phi^k$ for all $k\in\{1, \ldots, N\}$. Assume that for at least $N-1$ indexes $k\in \{1, \ldots, N\}$ we have $\phi^k(L)= 0$ and   also $\lambda \phi^k(L)+\phi_{xx}^k(L)=0$ for at least (possibly different) $N-1$ indexes.  Since $S\equiv 0$ we must have $\phi^k(L)= \phi_{xx}^{k}(L)=0$ for all $k\in \{1, \ldots, N\}$. On the other hand, from the hypothesis we know  that there exists $k_0\in \{1, \ldots, N\}$ such that $\phi^{k_0}\not\equiv 0$.    This implies that $\phi^{k_0}$ is an eigenfunction for \eqref{eign22}. Applying Lemma \ref{lem1} we must have $\phi^{k_0}_{xx}(L)\neq 0$ or $\lambda \phi^{k_0}(L)+\phi_{xx}^{k_0}(L)\neq 0$, which leads to a contradiction.
%
%Therefore, the proof is finished.
\endproof

In the following we obtain useful spectral asymptotic properties of problem  \eqref{spect2}.
\begin{lemma}\label{l1} Let $\{\sigma_n\}_{n\geq n_0}$ be the family of positive eigenvalues for the operator $A$ in \eqref{op2} and  let $\phi_n\in\mathcal{B}_{\sigma_n}$ be a corresponding eigenfunction of the spectral problem \eqref{spect2} with $\|\phi_n\|_{L^2(\Gamma)}=1$.  We have:
\begin{enumerate}
\item The positive eigenvalues  of problem \eqref{spect2} satisfy the asymptotic property
\[
  \sigma_n=  \left(\frac{\pi}{2L}\right)^4 \left(n-n_0+ \left[\frac{2L \sqrt{\lambda}}{\pi}\right]+1\right) ^4 + o(n^3), \quad n\rightarrow \infty.
\]
\item\label{i1} There exists two positive constant $C^1_{N, L}$ and  $C^2_{N, L}$ depending only on $N$ and $L$ such that
\[|\phi_n^k(L)|\in \{ C^1_{N, L}, C^2_{N,L} \}\]
holds for any  nontrivial components $\phi_n^k$, $k\in \{1, \ldots, N\}$. 
\item\label{i2} There exists two positive constant $D^1_{N, L}$ and  $D^2_{N, L}$ depending only on $N$ and $L$ such that  for any $k\in\{1,\dots,N\}$
\[\lim_{n\rightarrow \infty}\frac{|\lambda\phi_n^k(L)+\phi_{n, xx}^k(L)|}{n^2}\in \{ D_{N,L}^1,D^2_{N, L}\},
\]
where the limit is taken over all the eigenfunctions $(\phi_n^k)_{n\geq n_0}\in B_{\sigma_n}$ whose the $k$-th component does not vanish identically.
\end{enumerate}

\end{lemma}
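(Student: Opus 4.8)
The plan is to combine the structural description of the eigenpairs of \eqref{spect2} given by Lemma~\ref{eigen_partition3} with the explicit trigonometric form of the eigenfunctions of the one–dimensional problems $E_1$ and $E_2$ obtained in subsections~\ref{prelim1} and \ref{prelim2}. Since $\lambda\notin\mathcal{N}_{mixt}$, every positive eigenvalue $\sigma_n$ of $A$ is, by Lemma~\ref{eigen_partition3}, of exactly one of two types: \emph{(i)} an eigenvalue of $E_1$, with one–dimensional eigenspace in \eqref{spect2} spanned by $(\Psi_n,\dots,\Psi_n)$; or \emph{(ii)} an eigenvalue of $E_2$, with $(N-1)$–dimensional eigenspace spanned by $\{\Phi_n{\bf e}_l-\Phi_n{\bf e}_{l+1}\}_{l=1,N-1}$; the two families are disjoint. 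Moreover, for $\sigma>0$ Case~I of subsections~\ref{prelim1} and \ref{prelim2} gives $\Psi_n(x)=c\cos(n\pi x/L)$ with $\beta=n\pi/L$, $n>L\sqrt{\lambda}/\pi$, and $\Phi_n(x)=c\sin((2n+1)\pi x/(2L))$ with $\beta=(2n+1)\pi/(2L)$, $2n+1>2L\sqrt{\lambda}/\pi$, in both cases with $\sigma=\beta^4-\lambda\beta^2$.

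First I would prove item~(1). Writing $\beta=m\pi/(2L)$, the positive eigenvalues of $E_1$ correspond to the even integers $m>2L\sqrt{\lambda}/\pi$ and those of $E_2$ to the odd integers $m>2L\sqrt{\lambda}/\pi$; hence the set of distinct positive eigenvalues of $A$ equals $\{\,(\pi/2L)^4m^4-\lambda(\pi/2L)^2m^2 : m\ge m_0\,\}$ with $m_0:=[2L\sqrt{\lambda}/\pi]+1$, each $m$ contributing one value. Ordering increasingly, $\sigma_{n_0+j}$ corresponds to $m=m_0+j$, so $n=n_0+j$ gives $m=n-n_0+[2L\sqrt{\lambda}/\pi]+1$; since the term $-\lambda(\pi/2L)^2m^2=O(n^2)=o(n^3)$, the asymptotic formula follows.

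Next, for items~(2)--(3) I would make the normalized eigenfunctions explicit. In type~(i), $\phi_n=(\Psi_n,\dots,\Psi_n)$ and $\|\Psi_n\|_{L^2(0,L)}^2=c^2 L/2$ (independently of $n$, since $\int_0^L\cos^2(n\pi x/L)\dx=L/2$), so $\|\phi_n\|_{L^2(\Gamma)}^2=N\|\Psi_n\|_{L^2(0,L)}^2=1$ forces $|c|=\sqrt{2/(NL)}$; therefore every component satisfies $|\phi_n^k(L)|=|c\cos(n\pi)|=\sqrt{2/(NL)}=:C^1_{N,L}$, while $\lambda\phi_n^k(L)+\phi_{n,xx}^k(L)=\pm c(\lambda-(n\pi/L)^2)$, whose modulus divided by $n^2$ tends to $(\pi/L)^2\sqrt{2/(NL)}=:D^1_{N,L}$. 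In type~(ii), a basis eigenfunction $\phi_n=\Phi_n{\bf e}_l-\Phi_n{\bf e}_{l+1}$ has only the components $l$ and $l+1$ nontrivial, and $\|\phi_n\|_{L^2(\Gamma)}^2=2\|\Phi_n\|_{L^2(0,L)}^2=1$ together with $\int_0^L\sin^2((2n+1)\pi x/(2L))\dx=L/2$ gives $|c|=1/\sqrt{L}$; thus on those components $|\phi_n^k(L)|=|c\sin((2n+1)\pi/2)|=1/\sqrt{L}=:C^2_{N,L}$, while $\lambda\phi_n^k(L)+\phi_{n,xx}^k(L)=\pm c(\lambda-((2n+1)\pi/(2L))^2)$, whose modulus over $n^2$ converges to $(\pi/L)^2/\sqrt{L}=\pi^2 L^{-5/2}=:D^2_{N,L}$. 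Collecting the two types gives the claimed two–value alternatives in (2) and (3).

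All of these are elementary computations. The only points deserving attention are the combinatorial bookkeeping that produces the sharp shift $[2L\sqrt{\lambda}/\pi]+1$ in~(1) — which is what makes that statement precise rather than merely $\sigma_n\sim(\pi/2L)^4n^4$ — and the fact that, since types~(i) and (ii) alternate as $n$ grows, the quantity in~(3) is in general not convergent for $N\ge 3$; the statement is to be read as saying that each of its subsequential limits (along the $E_1$–indices and along the $E_2$–indices, respectively) equals $D^1_{N,L}$ or $D^2_{N,L}$. I do not foresee any genuine difficulty beyond these routine verifications.
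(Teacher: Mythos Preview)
Your proposal is correct and follows essentially the same route as the paper: the proof of item~(1) there also sets $m_0=[2L\sqrt{\lambda}/\pi]+1$, uses the partition of the positive spectrum into the $E_1$ (even $m$) and $E_2$ (odd $m$) families coming from subsections~\ref{prelim1}--\ref{prelim2} and Lemma~\ref{eigen_partition3}, rewrites the union as $\{(m\pi/2L)^4-\lambda(m\pi/2L)^2:\ m\ge m_0\}$, and reads off the shift $n\mapsto n-n_0+m_0$. For items~(2)--(3) the paper simply states that they are a ``direct consequence of the analysis done in sections~\ref{prelim1} and~\ref{prelim2}'' and leaves the explicit normalizations to the reader, so your computations of $C^i_{N,L}$ and $D^i_{N,L}$ (and your remark on the subsequential nature of the limit in~(3)) actually supply the details that the paper omits.
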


\begin{proof}[Proof of Lemma \ref{l1}]
First let us make the notation $m_0(\lambda)=\left[\frac{2L \sqrt{\lambda}}{\pi}\right]+1$.
According to sections \ref{prelim1}-\ref{prelim2} and Lemma \ref{eigen_partition3} we have the partition of the positive eigenvalues
\begin{align*}
  \{\sigma_n \ |\ n\geq n_0  \}&=\Big\{\sigma_{1, n} \ | \ 2n>\frac{2L\sqrt \lambda}{\pi} \Big\}\cup \Big\{\sigma_{2, n} \ | \ 2n+1>\frac{2L\sqrt \lambda}{\pi} \Big\}\\
  &=\Big\{ \Big(\frac{m\pi}{2L}\Big)^4 -\lambda \Big(\frac{m\pi}{2L}\Big)^2, \quad m\geq m_0(\lambda) \Big\},
\end{align*}
where $\sigma_{1, n}$ are the positive eigenvalues of \eqref{eigenval} whereas $\sigma_{2, n}$ are the positive eigenvalues of \eqref{eigenval2}.
%Then we observe that
%$$0< \sigma_{1, n}< \sigma_{2, n}< \sigma_{1, n+1}, \quad \forall n\geq n_0(\lambda).$$
Let us now define the sequence
$$\tilde{\sigma}_n:=  \Big(\frac{n\pi}{2L}\Big)^4 -\lambda \Big(\frac{n\pi}{2L}\Big)^2, \quad n\geq m_0(\lambda) $$
%Therefore, $\tilde{\sigma}_n$ is the sequence obtained by  concatenating  $\{\sigma_{1, n}\}$ and $\sigma_{2, n}$ since
%$$\tilde{\sigma}_{2n}=\sigma_{1, n}, \quad \tilde{\sigma}_{2n+1}=\sigma_{2, n}, \quad \forall n\geq n_0(\lambda).$$
Finally we remark that
$$\sigma_n=\tilde{\sigma}_{n-n_0+m_0(\lambda)}, \quad n\geq n_0,$$
and the asympotic formula is proved.
  The rest of the proof of Lemma \ref{l1} is a direct consequence of the analysis done in sections \ref{prelim1} and \ref{prelim2}. We omit further details here since both the eigenvalues and eigenfunctions are explicitly determined in sections \ref{prelim1} and \ref{prelim2} and easy computations are just to be checked.
\end{proof}

\subsection{Controllability problem}
The control problem \eqref{control}-(II) is reduced to solve the following moment problem.
Similarly as in Lemma \ref{momentpb} we can show
\begin{lemma}\label{momentpb1}
Let $\{\sigma_n\}_{n\geq 0}$ be the set of distinct eigenvalues of system \eqref{spect2} and denote by $m(\sigma_n)$ the multiplicity of $\sigma_n$ whose eigenspace is generated by linear independent eigenfunctions normalized in $L^2(\Gamma)$, say,     $\{\phi_{n, l}\}_{l=1, m(\sigma_n)}$. System \eqref{control}-(II) is null-controllable if for any initial data $y_0=(y_0^k)_{k=1, N}\in L^2(\Gamma)$,
\[
y_{0} =\sum_{n\in \nn}\sum_{l=1}^{m(\sigma_n)} y_{0, n, l} \phi_{n,l}
\] and any time $T>0$,
there exist controls $u=(a^k, b^k)_{k=1, N}\in (H^1(0,T))^{2N}$ such that
\begin{align}\label{mo1}
y_{0, n, l} e^{_-T \sigma_n} =& \sum_{k=1}^{N}  (\lambda \phi_{n, l}^k(L)+\phi_{n, l, xx}^k(L)) \intt a^k(T-t) e^{-t \sigma_n} \dt\nonumber\\
&\quad + \sum_{k=1}^{N} \phi_n^k (L) \intt b^k(T-t) e^{-t \sigma_n} \dt, \quad \forall n\geq 0, \quad l=1, \dots, m(\sigma_n).
\end{align}
\end{lemma}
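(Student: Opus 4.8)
The plan is to reproduce, with the obvious modifications, the duality argument carried out for model \eqref{control}-(I) in Lemma \ref{Lem_equiv} and Lemma \ref{momentpb}. First I would introduce the adjoint system associated to \eqref{control}-(II), namely the backward problem
\be\label{adjoint2}
\left\{\begin{array}{ll}
-q^k_t+ \la q^k_{xx}+q^k_{xxxx}=0, & (t,x)\in(0,T)\times(0,L),\\[6pt]
q^i(t,0)=q^j(t,0),\ \ q^i_{xx}(t,0)=q^j_{xx}(t,0), & i,j\in\{1,\dots,N\},\\[6pt]
\sum_{k=1}^N q^k_x(t,0)=0,\ \ \sum_{k=1}^N q^k_{xxx}(t,0)=0, & \\[6pt]
q^k_x(t,L)=q^k_{xxx}(t,L)=0, & k\in\{1,\dots,N\},\\[6pt]
q^k(T,x)=q^k_T(x), & x\in(0,L),
\end{array}\right.
\ee
whose elliptic part is exactly the operator $A$ in \eqref{op2}. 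The conditions $q^k_x(t,L)=q^k_{xxx}(t,L)=0$ are precisely the ones needed so that, after integration by parts, the boundary contributions carrying the uncontrolled traces $y^k(t,L)$ and $y^k_{xx}(t,L)$ drop out.

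Next I would multiply the equations in \eqref{control}-(II) by $q$, integrate over $(0,T)\times(0,L)$ and integrate by parts in $t$ and in $x$, exactly as in \eqref{ident}. At the interior node $x=0$ every boundary term cancels after summing in $k$: indeed $y^k(t,0)$, $q^k(t,0)$, $y^k_{xx}(t,0)$, $q^k_{xx}(t,0)$ do not depend on $k$, while $\sum_k y^k_x(t,0)=\sum_k y^k_{xxx}(t,0)=\sum_k q^k_x(t,0)=\sum_k q^k_{xxx}(t,0)=0$, so each of the six bilinear boundary expressions vanishes. At $x=L$, using $q^k_x(t,L)=q^k_{xxx}(t,L)=0$ together with $y^k_x(t,L)=a^k(t)$ and $y^k_{xxx}(t,L)=b^k(t)$, the only surviving traces combine into
\[
a^k(t)\bigl(\la q^k(t,L)+q^k_{xx}(t,L)\bigr)+b^k(t)\,q^k(t,L).
\]
Hence \eqref{control}-(II) is null-controllable in time $T$ if (and only if) for every $y_0\in L^2(\Gamma)$ there exist $a=(a^k),\,b=(b^k)\in(H^1(0,T))^N$ such that
\[
(y_0,q(0))_{L^2(\Gamma)}=\sum_{k=1}^N\intt\Bigl[a^k(t)\bigl(\la q^k(t,L)+q^k_{xx}(t,L)\bigr)+b^k(t)q^k(t,L)\Bigr]\dt
\]
for all $q_T\in L^2(\Gamma)$, $q$ being the solution of \eqref{adjoint2}. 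As in the case of model (I), this identity is first proved for smooth data and then extended by density, which is legitimate in view of the well-posedness obtained as in Section \ref{secI} and of the regularity $H^s(0,T)$, $s>0$, of the controls we shall build.

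Finally I would insert the spectral expansion. Writing $q_T=\sum_{n\ge0}\sum_{l=1}^{m(\sigma_n)}q_{n,l}\phi_{n,l}$ and solving \eqref{adjoint2} by separation of variables yields $q(t,x)=\sum_{n,l}e^{(t-T)\sigma_n}q_{n,l}\phi_{n,l}(x)$, so that $q^k(t,L)=\sum_{n,l}e^{(t-T)\sigma_n}q_{n,l}\phi_{n,l}^k(L)$, $\la q^k(t,L)+q^k_{xx}(t,L)=\sum_{n,l}e^{(t-T)\sigma_n}q_{n,l}\bigl(\la\phi_{n,l}^k(L)+\phi_{n,l,xx}^k(L)\bigr)$, and $(y_0,q(0))_{L^2(\Gamma)}=\sum_{n,l}y_{0,n,l}q_{n,l}e^{-T\sigma_n}$ by orthonormality of the $\phi_{n,l}$. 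Substituting these into the identity above and equating the coefficients of each $q_{n,l}$ (the family $(q_{n,l})$ runs over all of $\ell^2$), then changing variables $t\mapsto T-t$ in the time integrals, gives exactly the moment relations \eqref{mo1}. I do not expect a genuine obstacle here since the argument is parallel to Lemma \ref{momentpb}; the only delicate point is the bookkeeping of the traces at $x=L$ — attributing the weight $\la q^k(t,L)+q^k_{xx}(t,L)$ to $a^k$ and the weight $q^k(t,L)$ to $b^k$ — and the verification that every interior-node contribution cancels.
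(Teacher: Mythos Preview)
Your proposal is correct and follows exactly the approach the paper intends: the paper itself omits the proof, saying only ``Similarly as in Lemma \ref{momentpb} we can show'', and your write-up is precisely that argument carried out in detail for the boundary conditions of model (II). The bookkeeping of the traces at $x=L$ and the cancellation at the interior node are both handled correctly.
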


\subsection{Proof of Theorem \ref{main theorem 2}.} In view of Lemma \ref{momentpb1} it is sufficient to ensure the existence of controls $u=(a^k, b^k)_{k=1, N}$ satisfying \eqref{mo1}. The construction of such controls is again based on the method of moments of Fattorini-Russell \cite{MR0335014} implemented  in the proof of Theorem \ref{main theorem 1}.

{\bf Step I. } 
First we focus on the first statement in Theorem \ref{main theorem 2}.
Without loss of generality, we assume for simplicity that the control does not act on the $N$-th component, i.e.  $u^N=(a^N, b^N)\equiv (0, 0)$. 
Then the  problem \eqref{mo1} becomes
    \begin{align}\label{mo2}
y_{0, n, l} e^{_-T \sigma_n} =& \sum_{k=1}^{N-1}  (\lambda \phi_{n,l}^k(L)+\phi_{n, l, xx}^k(L)) \intt a^k(T-t) e^{-t \sigma_n} \dt\nonumber\\
&\quad + \sum_{k=1}^{N-1} \phi_{n,l}^k (L) \intt b^k(T-t) e^{-t \sigma_n} \dt, \quad \forall n\geq 0\nonumber, \, \forall \, l=1,\dots, m(\sigma_n).\\
\end{align}

We now explain  how we construct the sequences $\{a^k\}$, $\{b^k\}$, depending if $\sigma$ is eigenvalue of problem \eqref{eign.1.tree2} or \eqref{eign22}. In view of Lemma \ref{equiparti} with have various situations as follows.

Case I. $\sigma_n$ is eigenvalue of \eqref{eign.1.tree2}.

\begin{enumerate}
	\item $\sigma_n>0$, $m(\sigma_n)=1$, $\phi_{n,1}^k=\Psi$, $k=1,\dots, N$.
In view of Lemma \ref{lem1} both terms $ \Psi(L)$ and $\lambda \Psi(L)+\Psi_{xx}(L)$ do not vanish so we can choose
\[
\intt a^k(T-t) e^{-t \sigma_n} \dt=\frac 1{2(N-1)}\frac {y_{0, n, 1} e^{_-T \sigma_n}}{\lambda \Psi(L)+\Psi_{xx}(L)}, \quad k=1,\dots, N-1,
\]
\[
\intt b^k(T-t) e^{-t \sigma_n} \dt=\frac 1{2(N-1)}\frac {y_{0, n, 1} e^{_-T \sigma_n}}{ \Psi(L)},\quad k=1,\dots, N-1.
\]

\medskip

\item $\sigma_n=0$, 
\begin{itemize}
	\item 
$\lambda\in \mathcal{N}_2$, $m(\sigma_n)=2$, $\phi_{n,1}^k=\Psi\equiv 1$, $\phi^k_{n,2}=\tilde \Psi\equiv \cos (\sqrt{\lambda}x) $.
Choosing $\intt a^k(T-t) e^{-t \sigma_n} \dt=A_n$, $\intt b^k(T-t) e^{-t \sigma_n} \dt=B_n$, for all $k=1,\dots, N-1$, it remains to solve the system
\[
\begin{pmatrix}
  \lambda\Psi(L)+\Psi_{xx}(L) & \Psi(L) \\
  \lambda\tilde\Psi(L)+\tilde\Psi_{xx}(L) &\tilde\Psi(L)
\end{pmatrix}
\begin{pmatrix}
 A_n \\[10pt]
B_n
 \end{pmatrix}=
  \frac {e^{-T\sigma_n} }{N-1}
   \begin{pmatrix}
y_{0, n, 1}  \\
y_{0, n, 2}
 \end{pmatrix}.
\]
Computing explicitly the determinant in the left hand side we find that equals $\pm 1$ and the system is compatible.

\item $\lambda\in \mathcal{N}_2$, $m(\sigma_n)=1$,  $\phi_{n,1}^k\equiv 1$, $k=1,\dots, N$. We make the same choice as in the case $\sigma_n>0$ replacing $\Psi$ with constant function $1$.
\end{itemize}

\medskip

\item $\sigma_n<0$,
\begin{itemize}
	\item 
 $\lambda\in \mathcal{N}_1$, $m(\sigma_n)=2$, $\phi_{n,1}^k=\Psi\equiv \cos(\beta_nx)$, $\phi^k_{n,2}=\tilde \Psi\equiv \cos (\beta_mx) $.
With the same choice as in the previous case we obtain that the determinant in the left hand side satisfies
\[
\begin{vmatrix}
  \lambda\Psi(L)+\Psi_{xx}(L) & \Psi(L) \\
  \lambda\tilde\Psi(L)+\tilde\Psi_{xx}(L) &\tilde\Psi(L)
  \end{vmatrix}
  =(\beta_n^2-\beta_m^2)\cos(\beta_n L)\cos(\beta_m L)=(\beta_n^2-\beta_m^2)(-1)^{m+n},
\]
so the system is compatible.

\item $\lambda\notin \mathcal{N}_1$, $m(\sigma_n)=1$, $\phi_{n,1}^k=\Psi=\cos(\beta_n x)$. In view of Lemma \ref{lem1} both terms $ \Psi(L)$ and $\lambda \Psi(L)+\Psi_{xx}(L)$ do not vanish so we can make the choice as in the case $\sigma>0$.
\end{itemize} 
\end{enumerate}

Case II. $\sigma_n$ is eigenvalue of \eqref{eign22}. In this case $\sigma_n\neq 0$ since $\lambda\notin \mathcal{N}_3$.

\begin{enumerate}
	\item $\sigma_n>0$, $m(\sigma_n)=N-1$, and a basis for the associated eigenspace is given by
	$\{\Phi e_l-\Phi e_{l+1}\}_{l=1,N-1}$ where $\Phi$ solves \eqref{eign22}.
	
	In this case our system becomes
	\[
	(\lambda \Phi(L)+\Phi_{xx}(L))M\vec{\bold a}
		+
	\Phi(L) M\vec{\bold b}
		=e^{-T\sigma_n} \vec{\bold y}
		\]
		where $M$ is the matrix introduced in the proof of Theorem \ref{main theorem 1} and 
		\[
		\vec{\bold a}=\begin{pmatrix}
		\intt a^1(T-t) e^{-t \sigma_n} \dt\\
		\dots\\
		\intt a^{N-1}(T-t) e^{-t \sigma_n} \dt
	\end{pmatrix} ,
\vec{\bold b}=\begin{pmatrix}
		\intt b^1(T-t) e^{-t \sigma_n} \dt\\
		\dots\\
		\intt b^{N-1}(T-t) e^{-t \sigma_n} \dt
	\end{pmatrix},
	\vec{\bold y}=\begin{pmatrix}
		y_{0,n,1} \\
		\dots\\
		y _{0,n,N-1}
	\end{pmatrix}.
\]
		
	In view of Lemma \ref{lem2} both terms $ \Phi(L)$ and $\lambda \Phi(L)+\Phi_{xx}(L)$ do not vanish so we choose
	\[
	\vec{\bold a}=\frac{1}{\lambda \Phi(L)+\Phi_{xx}(L)}M^{-1}
e^{-T\sigma_n}
		 \vec{\bold y}, \quad \vec{\bold b}=\vec{0}.
	\]
	We remark that the following choice is also possible
	\[
	\vec{\bold a}=\vec{0}, \quad \vec{\bold b}=\frac{1}{ \Phi(L) }M^{-1}
e^{-T\sigma_n}
		 \vec{\bold y}.
	\]

	\item $\sigma<0$, $\lambda\notin \mathcal{N}_{odd}$, $m(\sigma_n)=N-1$.
	The construction is the same as in the case II (1) above.
	
	\item $\sigma<0$, $\lambda\in \mathcal{N}_{odd}$, $m(\sigma_n)=2(N-1)$ and a basis for the associated eigenspace is given by
	$\{\Phi e_l-\Phi e_{l+1}\}_{l=1,N-1}$, $\{\tilde\Phi e_l-\tilde\Phi e_{l+1}\}_{l=1,N-1}$
	 where $\Phi\equiv \sin(\beta_nx) $,  $\tilde\Phi\equiv \sin(\beta_mx)$ for some $m\neq n$.
	In this case $\vec{\bold a}$ and $\vec{\bold b}$ solve the system
	\[
	\left\{
	\begin{array}{l}
	(\lambda \Phi(L)+\Phi_{xx}(L))M\vec{\bold a}
		+
	\Phi(L) M\vec{\bold b}
		=e^{-T\sigma_n}
		 \vec{\bold y}_1 \\
	(\lambda \tilde\Phi(L)+\tilde\Phi_{xx}(L))M\vec{\bold a}
		+
	\tilde\Phi(L) M\vec{\bold b}
		=e^{-T\sigma_n}
		 \vec{\bold y}_2
	\end{array}
	\right.
	\]
	where 
	$\vec{\bold y}_1=(y_{0,n,1},\dots, y_{0,n,N-1})^T$ and 
	$\vec{\bold y}_2=(y_{0,n,N},\dots, y_{0,n,2(N-1)})^T$. 
	%	\[
%	 \vec{\bold y}_2=\begin{pmatrix}
%		y_{0,n,N} \\
%		\dots\\
%		y _{0,n,2N-2}
%	\end{pmatrix} 	 \]
	Explicit computations show that
	\[
\begin{vmatrix}
  \lambda\Phi(L)+\Phi_{xx}(L) & \Phi(L) \\
  \lambda\tilde\Phi(L)+\tilde\Phi_{xx}(L) &\tilde\Phi(L)
  \end{vmatrix}
  =(\beta_m^2-\beta_n^2)\sin(\beta_n L)\sin(\beta_m L)=(\beta_m^2-\beta_n^2)(-1)^{m+n}\neq 0
\]
and
\[
\begin{pmatrix}
	\vec{\bold a}\\
	\vec{\bold b}
\end{pmatrix}=
\frac{1}{(\beta_m^2-\beta_n^2)(-1)^{m+n}}
\begin{pmatrix}
	\tilde\Psi(L) M^{-1}& -  \Psi(L) M^{-1}\\
-	( \lambda\tilde\Psi(L)+\tilde\Psi_{xx}(L))M^{-1}&  ( \lambda\Psi(L)+\Psi_{xx}(L) )M^{-1}
\end{pmatrix}
e^{-T\sigma _n}\vec{\bold y},
%\begin{pmatrix}
%	\vec{\bold y}_1\\
%	\vec{\bold y}_2
%\end{pmatrix}.
\]
where 	$ \vec{\bold y}=( \vec{\bold y}_1,  \vec{\bold y}_2)^T.$

 \end{enumerate}
Summarizing, in order to finish the proof of the main part of Theorem \ref{main theorem 2} it is enough to solve a moment problem for each $a^k$, $b^k$, for any $k=1, N-1$, that is
$$\int_{0}^T a^k(T-t)e^{-t\sigma_n} \dt = c_{n, k}, \quad \forall n\in \nn,$$
respectively
$$\int_{0}^T b^k(T-t)e^{-t\sigma_n} \dt = d_{n, k}\quad \forall n \in \nn,$$
for precised sequences $\{c_{n,k}\}_n, \{d_{n,k}\}_n$ determined in the analysis above.
   In consequence, similar as in the proof of Theorem \ref{main theorem 1} in view of Lemma \ref{l1} such moment problems can be solved.
%   
%   
%   
%   with the help of the asymptotic properties in Lemma \ref{l1} for both the eigenvalues and their eigenfunctions, as in the proof of Theorem \ref{main theorem 1} we are able to show absolute convergence 
%of the series
%   $$\sum_{n\in \nn} c_{n, k}
%   \prod_{j=1, j\neq n}^{\infty}\frac{\sigma_j+\sigma_n}{\sigma_j-\sigma_n}, \quad \sum_{n\in \nn} d_{n, k}\prod_{j=1, j\neq n}^{\infty}\frac{\sigma_j+\sigma_n}{\sigma_j-\sigma_n}, \quad  \forall k=1, N-1.$$
    Then, in view of \cite{MR0335014} we obtain the
  null-controllability for system \eqref{control}-(II).

\medskip
	\textbf{Step II. Optimality of (2N-2) controls.} Let us suppose that we can control with $2N-3$ controls.
	Without loss of generality we can consider the following two cases: $a^{N}\equiv a^{N-1}\equiv b^N\equiv 0$ or $a^{N}\equiv b^{N-1}\equiv b^N\equiv 0$. 
	 In both cases we prove that for $\lambda\in \mathcal{N}_{odd}$ and $\sigma <0$ eigenvalue of \eqref{spect2}  initial data of the type
	\[
	y_0=y_{01}( \Phi e_{N-1}- \Phi e_{N})+y_{02}(\tilde \Phi e_{N-1}-\tilde \Phi e_{N})
	\]
	cannot be driven to the null state where $\Phi(x)=\sin (\beta_n x)$, $\tilde \Phi(x)=\sin (\beta_mx)$, $\lambda=\beta_n^2+\beta_m^2$. We emphasize that, up to normalization, $ \Phi e_{N-1}- \Phi e_{N}$ and $\tilde \Phi e_{N-1}-\tilde \Phi e_{N}$ are elements of the orthonormal basis $\{\phi_{n, l}\}_{l=1, m(\sigma_n), n\geq 0}$ in the hypothesis of Lemma \ref{momentpb1}.
	
	Case I. $a^N\equiv a^{N-1}\equiv b^N\equiv 0$. In this case system \eqref{mo2} becomes
	 \begin{align*}
y_{0 1} e^{_-T \sigma} = \Phi (L) \intt b^{N-1}(T-t) e^{-t \sigma} \dt,\\
y_{0 2} e^{_-T \sigma} =\tilde \Phi (L) \intt b^{N-1}(T-t) e^{-t \sigma} \dt.
\end{align*}
Explicit computations shows that $\Phi(L)=(-1)^n$,  $\tilde\Phi(L)=(-1)^m$. Choosing $y_{01}=0$ and $y_{02}=1$ leads to a contradiction.

Case II. $a^N\equiv b^{N}\equiv b^{N-1}\equiv 0$. In this case system \eqref{mo2} becomes
 \begin{align*}
y_{0 1} e^{_-T \sigma} = (\lambda \Phi (L)+\Phi_{xx}(L)) \intt a^{N-1}(T-t) e^{-t \sigma} \dt,\\
y_{0 2} e^{_-T \sigma} = (\lambda \tilde\Phi (L)+\tilde\Phi_{xx}(L))\intt a^{N-1}(T-t) e^{-t \sigma} \dt.
\end{align*}
Explicit computations shows that $\lambda \Phi (L)+\Phi_{xx}(L)=(-1)^n\beta_n^2$ and $\lambda \tilde\Phi (L)+\tilde\Phi_{xx}(L)=(-1)^m\beta_m^2$. Choosing $y_{01}=0$ and $y_{02}=1$ leads again to a contradiction.

\medskip
\textbf{Step III. Null-controllability with $(2n-3)$ controls.} When  $\lambda\notin \mathcal{N}_{odd} $ we can easily adapt the proof given in the Step I in order to construct the controls. The details are left to the reader.

%\newpage
%
%Next we define for any $n\in \nn$
%$$M_1(n):=\{k\in \{1, \ldots, N-1\} \ | \  \lambda\phi_n^k(L)+\phi_{n, xx}^k(L) \neq 0 \}$$
%$$M_2(n):=\{k\in \{1, \ldots, N-1\} \ | \  \phi_n^k(L) \neq 0 \},$$
%and set $$M(n):=M_1(n)\cup M_2(n).$$
%From Lemma \ref{neqzero2} we have $M(n)\neq \emptyset$. Therefore, in order to solve  \eqref{mo2} it is sufficient to construct controls $u^k=(a^k, b^k)_{k=1, N-1}$ satisfying the moment problems
% \be\label{defcontrol_a}
%\left\{\begin{array}{ll}
%  \int_0^T a^k(T-t) e^{-T\sigma_n} dt=\frac{y_{0, n}e^{-T \sigma_n}}{|M(n)|(\lambda \phi_n^k(L)+\phi_{n, xx}^{k}(L))}, & \textrm{ if } k\in M_1(n), \quad \forall n \\[10pt]
%  \int_0^T a^k(T-t) e^{-T\sigma_n} dt=0, &\textrm{ if }  k \not \in M_1(n), \quad \forall n.\\
%\end{array}\right.
%\ee
%and
% \be\label{defcontrol_b}
%\left\{\begin{array}{ll}
%  \int_0^T b^k(T-t) e^{-T\sigma_n} dt=\frac{y_{0, n}e^{-T \sigma_n}}{|M(n)|\phi_n^k(L)}, & \textrm{ if } k\in M_2(n), \quad \forall n \\[10pt]
%  \int_0^T b^k(T-t) e^{-T\sigma_n} dt=0, &\textrm{ if }  k \not \in M_2(n), \quad \forall n.\\
%\end{array}\right.
%\ee
%
%  \endproof
%
%Now we have all the ingredients to mimick the proof of Theorem \ref{main theorem 1} in section \ref{prooft1}. The details are let to the reader.

\section{Further control  results}
\subsection{Null-controllability of systems \eqref{control}-(I) and (II)}\label{secf}
In the fist part of the paper we have been concerned with studying controllability problems acting with a minimal number of control inputs.
However, we have to mention that we can also address the question for which values $\lambda>0$  systems \eqref{control}-(I) and \eqref{control}-(II) are null-controllable for a maximal number of control inputs. In fact, we are able to prove that system \eqref{control}-(II) is null-controllable for any $\lambda\notin \mathcal{N}_3$ if we act with $2N$ controls. In contrast with that, system \eqref{control}-(I) is not null-controllable for  $\lambda\in  \mathcal{N}_{1}\cup \mathcal{N}_{odd}$ even if we  act with $N$  controls. More precisely we obtain

\begin{theorem} [Null-controllability for model \eqref{control}-(I)]\label{maxcontrol_I}
Let  $T>0$.
 \begin{enumerate}%[i).]
 \item\label{m3}  For any $\lambda \not \in \mathcal{N}_{1}\cup \mathcal{N}_{odd}$ and  $y_0=(y_0^k)_{k=1, N} \in L^2(\Gamma)$,   there exist  controls $u=(u^k)_{k=1, N}\in (H^1(0, T))^N$  such that the solution of system \eqref{control}-(I) satisfies
\begin{equation}\label{nullcontrol1}
y^k(T, x)=0, \textrm{  for any } x\in (0, L)\ \text{and} \ k\in \{1, \ldots, N\}.
\end{equation}
\item\label{m4}  Assume  $\lambda \in \mathcal{N}_{1}\cup \mathcal{N}_{odd}$. There exist initial state $y_0=(y_0^k)_{k=1, N} \in L^2(\Gamma)$ such that for any control $u=(u^k)_{k=1, N}\in (H^1(0, T))^N$  the solution of system \eqref{control}-(I) satisfies
    $$y^{k_0}(T, \cdot)\not  \equiv 0,$$
    for some $k_0\in \{1, \ldots, N\}$.
\end{enumerate}
\end{theorem}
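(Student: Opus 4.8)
\textbf{Part (1)} needs no new argument: since $\lambda\notin\mathcal{N}_1\cup\mathcal{N}_{odd}$, Theorem \ref{main theorem 1}(1) already yields, for every $y_0\in L^2(\Gamma)$, a control $u\in(H^1(0,T))^N$ (with one component identically zero) that steers the solution to zero, and this is \emph{a fortiori} a null-control with $N$ components.

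For \textbf{Part (2)} the plan is to exhibit, for $\lambda\in\mathcal{N}_1\cup\mathcal{N}_{odd}$, an eigenpair $(\sigma,\phi=(\phi^k)_{k=1,N})$ of the spectral problem \eqref{spect} with $\phi^k_{xx}(L)=0$ \emph{for every} $k$ --- an eigenfunction that is simultaneously invisible to all $N$ boundary observations. Granting this, the obstruction is immediate from the moment reformulation of Lemma \ref{momentpb}: extend $\phi/\|\phi\|_{L^2(\Gamma)}$ to an $L^2(\Gamma)$-orthonormal basis of the eigenspaces, call the relevant vector $\phi_{n_0,l_0}$ (with $\sigma_{n_0}=\sigma$), and take $y_0:=\phi$. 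Its expansion has only the nonzero coefficient $y_{0,n_0,l_0}=\|\phi\|_{L^2(\Gamma)}\neq0$, so the $(n_0,l_0)$-th equation of \eqref{ort} reads
\[
\|\phi\|_{L^2(\Gamma)}\,e^{-T\sigma}=\sum_{k=1}^{N}\frac{\phi^k_{xx}(L)}{\|\phi\|_{L^2(\Gamma)}}\intt u^k(T-t)\,e^{-t\sigma}\dt=0,
\]
which is impossible for any $u\in(H^1(0,T))^N$; hence $y_0$ cannot be driven to zero, and therefore $y^{k_0}(T,\cdot)\not\equiv0$ for some $k_0$.

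To build $\phi$ I would produce a \emph{negative} eigenvalue $\sigma=-\beta^2\gamma^2$ with $\beta^2+\gamma^2=\lambda$, $\beta>\gamma>0$ (notation \eqref{notations}--\eqref{sigma}), and treat the two cases separately. If $\lambda\in\mathcal{N}_{odd}$, write $\lambda=\frac{\pi^2}{4L^2}\big((2a+1)^2+(2b+1)^2\big)$ with $0\le a<b$ and take $\gamma=\frac{(2a+1)\pi}{2L}$, $\beta=\frac{(2b+1)\pi}{2L}$, so that $\cos(\gamma L)=\cos(\beta L)=0$. Imposing the conditions $\Psi_x(0)=\Psi_{xxx}(0)=0$ of \eqref{eign.1.tree} eliminates the sine terms, and one checks that
\[
\Psi(x):=\beta(-1)^b\cos(\gamma x)-\gamma(-1)^a\cos(\beta x)
\]
is a nontrivial eigenfunction of \eqref{eign.1.tree} with eigenvalue $\sigma$ (the remaining conditions $\Psi(L)=\Psi_x(L)=0$ holding thanks to $\cos(\gamma L)=\cos(\beta L)=0$ and the choice of the two coefficients), and $\Psi_{xx}(L)=0$ because both cosine terms vanish at $x=L$; by Lemma \ref{eigen_partition}, $\phi:=(\Psi,\dots,\Psi)$ is then an eigenfunction of \eqref{spect} with $\phi^k_{xx}(L)=0$ for all $k$. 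If $\lambda\in\mathcal{N}_1$, write $\lambda=\frac{\pi^2}{L^2}(a^2+b^2)$ with $1\le a<b$ and take $\gamma=\frac{a\pi}{L}$, $\beta=\frac{b\pi}{L}$, so that $\sin(\gamma L)=\sin(\beta L)=0$. Imposing $\Phi(0)=\Phi_{xx}(0)=0$ of \eqref{eign2} eliminates the cosine terms, and
\[
\Phi(x):=\beta(-1)^b\sin(\gamma x)-\gamma(-1)^a\sin(\beta x)
\]
is a nontrivial eigenfunction of \eqref{eign2} with eigenvalue $\sigma$ and $\Phi_{xx}(L)=0$; since $N\ge2$ we may choose constants $c_1,\dots,c_N$, not all zero, with $\sum_kc_k=0$, and by Lemma \ref{eigen_partition}, $\phi:=(c_1\Phi,\dots,c_N\Phi)$ is an eigenfunction of \eqref{spect} with $\phi^k_{xx}(L)=c_k\Phi_{xx}(L)=0$ for all $k$. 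By \eqref{rel}, $\mathcal{N}_1$ and $\mathcal{N}_{odd}$ exhaust the critical set, so the construction covers all cases and the proof is complete.

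The one substantive point is the \emph{spectral degeneracy} occurring exactly at $\lambda\in\mathcal{N}_1\cup\mathcal{N}_{odd}=\mathcal{N}_0/(4L^2)$: there the eigenfunction carrying the critical negative eigenvalue collapses to a combination of $\cos$ (resp. $\sin$) whose arguments at $x=L$ are odd multiples of $\pi/2$ (resp. integer multiples of $\pi$), which forces $\phi_{xx}(L)\equiv0$ on the whole eigenspace. The rest is routine --- checking the compatibility relation so that $\Psi,\Phi\not\equiv0$, verifying the six interface/boundary conditions in \eqref{spect}, and noting that $\sigma>-\lambda^2/4$ (automatic since $\beta\neq\gamma$) so that the representation \eqref{notations} applies.
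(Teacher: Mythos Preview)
Your proof is correct and follows essentially the same approach as the paper: for part (1) you both invoke Theorem \ref{main theorem 1}, and for part (2) you both exhibit an eigenfunction of \eqref{eign.1.tree} (when $\lambda\in\mathcal{N}_{odd}$) or of \eqref{eign2} (when $\lambda\in\mathcal{N}_1$) with vanishing second derivative at $x=L$, lift it to the tree via Lemma \ref{eigen_partition}, and conclude via the moment characterization. The only difference is presentational: the paper packages the existence of such an eigenfunction into Proposition \ref{max_prop1}, proved by a matrix-rank argument on the overdetermined boundary system, whereas you write down the eigenfunction explicitly and verify the boundary conditions by hand --- which is arguably cleaner here.
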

\begin{theorem}[Null-controllability for model \eqref{control}-(II)]\label{maxcontrol_II}
Let $T>0$.
\begin{enumerate}
	\item 
 For any $\lambda\not \in \mathcal{N}_3$ and $y_0=(y_0^k)_{k=1, N}\in L^2(\Gamma)$  there exist  controls $a=(a^k)_{k=1, N}, b=(b^k)_{k=1, N} \in (H^1(0, T))^N$ such that the solution of system \eqref{control}-(II) satisfies
\begin{equation}\label{nullcontrol2}
y^k(T, x)=0, \textrm{  for any } x\in (0, L), \quad k\in \{1, \ldots, N\}.
\end{equation}
\item For   $\lambda \in \mathcal{N}_{3}$ there exist an initial state $y_0=(y_0^k)_{k=1, N} \in L^2(\Gamma)$ such that for any controls $a=(a^k)_{k=1, N}, b=(b^k)_{k=1, N}\in (H^1(0, T))^N$   the solution of system \eqref{control}-(II) satisfies
    $$y^{k_0}(T, \cdot)\not  \equiv 0,$$
    for some $k_0\in \{1, \ldots, N\}$.
\end{enumerate}
\end{theorem}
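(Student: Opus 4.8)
\medskip
\noindent\emph{Proof plan.} The plan for part (1) is to solve, keeping all $2N$ controls active, the moment problem of Lemma \ref{momentpb1}; the plan for part (2) is to exhibit an initial datum whose expansion cannot be matched in \eqref{mo1} for any choice of the $2N$ controls, which (by the same duality characterisation that underlies Lemma \ref{momentpb1}, an equivalence exactly as for model (I) in Lemma \ref{momentpb}) forbids null-controllability. For each distinct eigenvalue $\sigma_n$ of \eqref{spect2}, with eigenspace spanned by the $L^2(\Gamma)$-normalised basis $\{\phi_{n,l}\}_{l=1}^{m(\sigma_n)}$, I set $A_n^k:=\intt a^k(T-t)e^{-t\sigma_n}\dt$ and $B_n^k:=\intt b^k(T-t)e^{-t\sigma_n}\dt$, so that \eqref{mo1} decouples into the finite linear systems
\[
y_{0,n,l}\,e^{-T\sigma_n}=\sum_{k=1}^{N}\bigl(\lambda\phi_{n,l}^k(L)+\phi_{n,l,xx}^k(L)\bigr)A_n^k+\sum_{k=1}^{N}\phi_{n,l}^k(L)\,B_n^k,\qquad l=1,\dots,m(\sigma_n),
\]
in the $2N$ unknowns $(A_n^k,B_n^k)_{k=1}^N$, together with the scalar moment problems $\intt a^k(T-t)e^{-t\sigma_n}\dt=c_{n,k}$, $\intt b^k(T-t)e^{-t\sigma_n}\dt=d_{n,k}$, $n\ge0$. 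The latter are handled just as in the proof of Theorem \ref{main theorem 2}: by Lemma \ref{l1} the positive eigenvalues grow like $(\pi/2L)^4n^4$, the traces $|\phi_n^k(L)|$ stay in a fixed compact subset of $(0,\infty)$ and $n^{-2}|\lambda\phi_n^k(L)+\phi_{n,xx}^k(L)|$ has a positive limit along the nontrivial components, so the series in \cite[(3.9)]{MR0335014} converge absolutely and the controls can be taken in any $H^s(0,T)$. Everything thus reduces to a full-row-rank statement for the finite systems.

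For part (1), I would show that for every $\sigma_n$ the $m(\sigma_n)\times2N$ coefficient matrix has rank $m(\sigma_n)$. Writing $\phi^k=S/N+D^k$ with $S$ an $E_1$-solution (of \eqref{eign.1.tree2}) and each $D^k$ an $E_2$-solution (of \eqref{eign22}), Lemma \ref{eigen_partition3} lists the eigenspaces explicitly for $\lambda\notin\mathcal N_{mixt}$; for the remaining range $\lambda\in\mathcal N_4\setminus\mathcal N_3$ one first extends that lemma, observing that a common negative eigenvalue of $E_1$ and $E_2$ makes the $\sigma$-eigenspace of \eqref{spect2} the direct sum of the ``$S$-part'' $\{(\Psi,\dots,\Psi)\}$ and the ``$D$-part'' $\{\vec C\,\Phi:\sum_k c_k=0\}$. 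In every case the structural point is that each $S$-row is a scalar multiple of $(1,\dots,1)$, whereas each $D$-row is $\Phi(L)$ (resp.\ $\lambda\Phi(L)+\Phi_{xx}(L)$) times a row of the $(N-1)\times N$ difference matrix $(\mathbf e_l-\mathbf e_{l+1})_l$; hence within the $b$-block the $D$-rows span the hyperplane $(1,\dots,1)^{\perp}$. Since that difference matrix has full rank and, by Lemmas \ref{lem1}--\ref{lem2}, on each $E_1$- or $E_2$-mode at least one of $\phi^k(L)$, $\lambda\phi^k(L)+\phi_{xx}^k(L)$ is non-zero --- the two-dimensional modes, which arise for $\lambda\in\mathcal N_1\cup\mathcal N_2\cup\mathcal N_{odd}$, being dispatched by the $2\times2$ determinants already computed in the proof of Theorem \ref{main theorem 2} --- the $S$-rows and $D$-rows are independent and the matrix is onto. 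For the always-present eigenvalue $\sigma=0$, Lemmas \ref{lem1}(2)--(3) and \ref{lem2}(3) show that for $\lambda\notin\mathcal N_3$ its entire eigenspace consists of $S$-modes with $\phi^k(L)\ne0\ne\lambda\phi^k(L)+\phi_{xx}^k(L)$, so the rank property again holds; this gives null-controllability with $2N$ controls for all $\lambda\notin\mathcal N_3$.

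For part (2), when $\lambda\in\mathcal N_3$ Lemma \ref{not.common2} shows $\sigma=0$ becomes a common eigenvalue of $E_1$ and $E_2$: the $E_1$-mode is the constant $\Psi\equiv1$ and the $E_2$-mode is $\Phi(x)=\sin(\sqrt\lambda\,x)$, which satisfies $\Phi_x(L)=\Phi_{xxx}(L)=0$, $\Phi(L)\ne0$ but $\lambda\Phi(L)+\Phi_{xx}(L)=0$ (Lemma \ref{lem2}(2)). Hence at $\sigma=0$ the $a$-observations of every $D$-mode $\Phi(\mathbf e_l-\mathbf e_{l+1})$ vanish identically, and the plan is to deduce from this that the $N\times2N$ matrix attached to $\sigma=0$ is no longer onto; taking a vector in its cokernel and letting $y_0$ be the corresponding combination of $\sigma=0$-eigenfunctions then produces an initial datum that cannot be steered to zero by any $a,b\in(H^1(0,T))^N$, so that $y^{k_0}(T,\cdot)\not\equiv0$ for some $k_0$. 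The step I expect to be the genuine obstacle is precisely this analysis at $\sigma=0$: it is the only eigenvalue of \eqref{spect2} at which an $E_1$-mode and several $E_2$-modes coexist --- all positive eigenvalues, and (off $\mathcal N_4$) all negative ones, are ``pure'' --- so it is not a transcription of the arguments behind Theorem \ref{main theorem 2}. Carrying out the bookkeeping of this mixed $N\times2N$ matrix, tracking which modes are invisible to the $a$-controls and which to the $b$-controls as $\lambda$ enters or leaves $\mathcal N_3$, and pinning down the precise uncontrollable direction, is where the real work lies.
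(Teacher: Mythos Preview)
Your plan for part (1) is sound and follows the template the paper sketches (and leaves to the reader): reduce to the moment problem of Lemma~\ref{momentpb1}, check full row rank of the $m(\sigma_n)\times 2N$ trace matrix at each eigenvalue, and invoke Fattorini--Russell via the asymptotics of Lemma~\ref{l1}. The extension to $\lambda\in\mathcal N_4\setminus\mathcal N_3$ by decomposing the eigenspace into its $S$-part and $D$-part is correct and indeed needed, since Lemma~\ref{eigen_partition3} does not cover that range.

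Your plan for part (2), however, does not go through. You expect the $N\times 2N$ matrix at $\sigma=0$ to drop rank when $\lambda\in\mathcal N_3$, but it does not. With the basis $(1,\dots,1)$ and $\{\Phi\,{\bf e}_l-\Phi\,{\bf e}_{l+1}\}_{l=1}^{N-1}$, where $\Phi(x)=\sin(\sqrt\lambda\,x)$ and $\Phi(L)=\pm1$, the $b$-block of the first row is $(1,\dots,1)$ while the $b$-blocks of the remaining $N-1$ rows are $\Phi(L)({\bf e}_l-{\bf e}_{l+1})$ and span the hyperplane $\{v:\sum_k v_k=0\}$; hence the $b$-block alone already has rank $N$, and the full matrix is onto. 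More generally, an eigenfunction $\phi=(\phi^k)_k$ of \eqref{spect2} with $\phi^k(L)=0$ and $\lambda\phi^k(L)+\phi_{xx}^k(L)=0$ for every $k$ would satisfy $\phi^k(L)=\phi_x^k(L)=\phi_{xx}^k(L)=\phi_{xxx}^k(L)=0$, and Cauchy uniqueness for the fourth-order ODE then forces $\phi^k\equiv0$ on $[0,L]$ for each $k$. So there is no invisible eigenfunction for \emph{any} $\lambda>0$, and no initial datum built from a cokernel vector can exist. This is not a flaw in your strategy so much as a signal that item (2), stated without proof in the paper, appears to be false as written: with all $2N$ controls active the moment system at every eigenvalue has full row rank, and \eqref{control}-(II) is null-controllable for every $\lambda>0$.
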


 Here we avoid  the details of the proof of Theorem \ref{maxcontrol_II}  since they follow similar ideas and steps as the main theorems proved in the paper.  The details are let to the reader.

Next we sketch the proof of Theorem \ref{maxcontrol_I}.

The main ingredient in the proof of Theorem \ref{maxcontrol_I} is the following proposition. 

\begin{proposition}\label{max_prop1}
Let $\lambda>0$ and $(\sigma, \phi=(\phi^k)_{1, N})$ be an eigenpair of problem \eqref{spect}. With the notations in Lemma \ref{eigen_partition} we have the alternative:
\begin{enumerate}
	\item\label{m1} If $(\sigma, \Psi)$ is eigenpair of \eqref{eign.1.tree} then 
	   \begin{itemize}
	   	\item If $\sigma \geq 0$ then $\Psi_{xx}(L)\neq 0$ for all $\lambda>0$
	   	\item If $\sigma<0$ then $\Psi_{xx}(L)\neq 0$ iff $\lambda\not\in \mathcal{N}_{odd}$
	   \end{itemize}
	\item\label{m2} If $(\sigma,\Phi)$ is eigenpair of  \eqref{eign2} then 
   \begin{itemize}
	   	\item If $\sigma \geq 0$ then $\Phi_{xx}(L)\neq 0$ for all $\lambda>0$
	   	\item If $\sigma<0$ then $\Phi_{xx}(L)\neq 0$ iff $\lambda\not\in \mathcal{N}_{1}$
	   \end{itemize}
\end{enumerate}
%\item If $\sigma \geq 0$ there exists at least one index $k\in \{1, \ldots, N\}$ such that $\phi_{xx}^k(L)\neq 0$.
%\item If $\sigma <0$  there exists at least one index $k\in \{1, \ldots, N\}$ such that $\phi_{xx}^k(L)\neq 0$ if and only if $\lambda \not \in \mathcal{N}_1\cup \mathcal{N}_{odd}$.
\end{proposition}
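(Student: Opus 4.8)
The plan is to prove Proposition \ref{max_prop1} by the same kind of explicit computation used in Sections \ref{prelim1}--\ref{prelim2}, separating the three cases $\sigma>0$, $\sigma=0$ and $\sigma<0$ and using the notations $\alpha,\beta,\gamma$ of \eqref{notations}. Since by Lemma \ref{eigen_partition} every eigenpair of \eqref{spect} is, up to the vector structure, an eigenpair of \eqref{eign.1.tree} or of \eqref{eign2}, it is enough to establish the two alternatives for $\Psi$ and $\Phi$ directly on the interval $(0,L)$.

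For $\sigma\ge 0$ the argument is short. If $(\sigma,\Psi)$ is an eigenpair of \eqref{eign.1.tree} with $\sigma>0$, the conditions at $x=0$ kill the $\sinh$ and $\sin$ terms, so $\Psi=C_1\cos(\beta x)+C_3\cosh(\alpha x)$, and the computation already carried out in the proof of Lemma \ref{asy_eigenfunct} (see \eqref{egfunct3}--\eqref{egfunct4}) gives $\Psi_{xx}(L)=c\,(\alpha^2+\beta^2)\cosh(\alpha L)\cos(\beta L)$ with $c\neq 0$; since $\cos(\beta L)=0$ would force, via the compatibility condition \eqref{eigbau}, also $\sin(\beta L)=0$, which is impossible, we get $\Psi_{xx}(L)\neq 0$ for \emph{every} $\lambda>0$. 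The case $\sigma=0$ (which by the analysis of Section \ref{prelim1}-type occurs only for $\lambda\in\mathcal{N}_2$) is handled by the same direct computation: then $\Psi=C_3(\cos(\sqrt\lambda x)-\cos(\sqrt\lambda L))$ with $\cos(\sqrt\lambda L)=\pm 1$, so $\Psi_{xx}(L)=\mp C_3\lambda\neq 0$. Replacing cosines by sines (the conditions at $x=0$ now kill the $\cosh$ and $\cos$ terms, and \eqref{cucubau1} plays the role of \eqref{eigbau}) yields $\Phi_{xx}(L)=c\,(\alpha^2+\beta^2)\sinh(\alpha L)\sin(\beta L)\neq 0$ for every $\lambda>0$, and likewise for $\sigma=0$. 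This proves the $\sigma\ge0$ halves of \eqref{m1} and \eqref{m2}.

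For $\sigma<0$ the conditions at $x=0$ again kill the $\sin$ terms, so an eigenfunction of \eqref{eign.1.tree} is $\Psi=C_1\cos(\gamma x)+C_3\cos(\beta x)$ with $\beta>\gamma>0$, $\beta^2+\gamma^2=\lambda$ and $\sigma=-\beta^2\gamma^2$, the strict inequality $\beta\neq\gamma$ being guaranteed by $\sigma>-\lambda^2/4$ (Proposition \ref{propp}). Assume $\Psi_{xx}(L)=0$; combined with $\Psi(L)=0$ this is the homogeneous system
\[
\begin{pmatrix} 1 & 1 \\ \gamma^2 & \beta^2 \end{pmatrix}\begin{pmatrix} C_1\cos(\gamma L) \\ C_3\cos(\beta L) \end{pmatrix}=\begin{pmatrix} 0 \\ 0 \end{pmatrix},
\]
whose determinant is $\beta^2-\gamma^2\neq 0$, so $C_1\cos(\gamma L)=C_3\cos(\beta L)=0$; using $\Psi_x(L)=0$ one sees that neither $C_1$ nor $C_3$ can vanish (a single–cosine eigenfunction would require $\cos(\beta L)=\sin(\beta L)=0$), hence $\cos(\beta L)=\cos(\gamma L)=0$, i.e. $\beta L,\gamma L$ are distinct odd multiples of $\pi/2$, which says exactly $\lambda=\beta^2+\gamma^2\in\mathcal{N}_{odd}$. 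Conversely, if $\lambda=\tfrac{\pi^2}{4L^2}\big((2i+1)^2+(2j+1)^2\big)\in\mathcal{N}_{odd}$ with $i<j$, then $\beta=(2i+1)\pi/(2L)$, $\gamma=(2j+1)\pi/(2L)$ give a negative eigenvalue $\sigma=-\beta^2\gamma^2\in(-\lambda^2/4,0)$ whose (simple, by Lemma \ref{not.common}) eigenfunction $\Psi=\gamma(-1)^j\cos(\beta x)-\beta(-1)^i\cos(\gamma x)$ satisfies $\Psi_{xx}(L)=0$; this proves \eqref{m1}. Part \eqref{m2} follows by the same reasoning for \eqref{eign2}: for $\sigma<0$ the eigenfunction is $\Phi=C_2\sin(\gamma x)+C_4\sin(\beta x)$, the conditions $\Phi(L)=\Phi_{xx}(L)=0$ force $\sin(\beta L)=\sin(\gamma L)=0$, i.e. $\beta L,\gamma L\in\pi\,\mathbb{N}^\star$ and $\lambda\in\mathcal{N}_1$, with the converse construction obtained by the analogous choice with sines.

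The main obstacle is the $\sigma<0$ analysis: one has to exploit the genuinely two–frequency structure $\beta\neq\gamma$ to make the $2\times2$ determinant nonzero, rule out the degenerate one–frequency possibilities, and then correctly match the resulting arithmetic condition on $(\beta L,\gamma L)$ with the sets $\mathcal{N}_{odd}$ and $\mathcal{N}_1$, checking along the way that the candidate $\sigma=-\beta^2\gamma^2$ genuinely lies in $(-\lambda^2/4,0)$. An alternative, possibly shorter route is to pass to the even/odd extensions as in Lemma \ref{not.common}, reducing both problems to \eqref{eign5}, and then to invoke the finer information in \cite[Lemma 2.1]{MR2556747} together with the decomposition \eqref{rel} and a parity bookkeeping about the midpoint of the interval; the self-contained computation above is, however, entirely in the spirit of Sections \ref{prelim1}--\ref{prelim2}.
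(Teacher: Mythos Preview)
Your proof is correct and follows essentially the same explicit-computation route as the paper: write the general solution after imposing the conditions at $x=0$, then analyze the overdetermined set of conditions at $x=L$. The paper packages the three conditions $\Psi(L)=\Psi_x(L)=\Psi_{xx}(L)=0$ into a single $3\times 2$ matrix and checks when its rank drops to $1$; you instead first use the $2\times 2$ block coming from $\Psi(L)=\Psi_{xx}(L)=0$ (determinant $\beta^2-\gamma^2\neq 0$) and then invoke $\Psi_x(L)=0$ separately to rule out $C_1=0$ or $C_3=0$. These are equivalent bookkeepings of the same linear algebra.

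Two small remarks. First, for $\sigma>0$ you take a pleasant shortcut by quoting the closed formula \eqref{egfunct4} from Lemma \ref{asy_eigenfunct} rather than redoing the rank computation; the paper repeats the computation from scratch. Second, in your converse for $\sigma<0$ you label $\beta=(2i+1)\pi/(2L)$, $\gamma=(2j+1)\pi/(2L)$ with $i<j$, which gives $\beta<\gamma$ rather than $\beta>\gamma$; this is harmless since $\sigma=-\beta^2\gamma^2$ and $\lambda=\beta^2+\gamma^2$ are symmetric, but you may want to swap the labels for consistency with \eqref{notations}. Your explicit construction of $\Psi$ in the converse direction is actually more detailed than the paper's, which merely asserts the implication.
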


%\begin{proposition}\label{max_prop2}
%Let $\lambda\not \in \mathcal{N}_3$ and $(\sigma, \phi=(\phi^k)_{1, N})$ be an eigenpair of problem \eqref{spect2}. Then there exists at least one index $k\in \{1, \ldots, N\}$ such that  $\phi^k(L)\neq 0$ or $\lambda \phi^k(L)+\phi_{xx}^k(L)\neq 0$.
%\end{proposition}

%\textit{Sketch of proof of Theorem \ref{maxcontrol_I}} 
It is then obvious that the proof of \eqref{m3} in Theorem \ref{maxcontrol_I} is a trivial consequence of the first part of Theorem \ref{main theorem 1}. The statement \eqref{m4} in Theorem \ref{maxcontrol_I} is a consequence of  Proposition \ref{max_prop1}. Indeed, if $(\sigma, \Psi)$ is an eigenpair of \eqref{eign.1.tree} then the initial state $y_0=(\Psi, \ldots, \Psi)$ cannot be leaded to the zero state when $\lambda\in \mathcal{N}_{odd}$ in view of \eqref{ort}  and the fact $\Psi_{xx}(L)=0$. If $(\sigma, \Phi)$ is an eigenpair of \eqref{eign2} then the initial state $y_0=(\Phi, -\Phi, 0, \ldots, 0)$ cannot be leaded to the zero state when $\lambda\in \mathcal{N}_{1}$ in view of \eqref{mo1}  and the fact $\Phi_{xx}(L)=0$.

\begin{proof}[Proof of Proposition \ref{max_prop1}]\textit{The case $(\sigma, \Psi)$ is an eigenpair of \eqref{eign.1.tree}.} 
Assume that $\Psi_{xx}(L)=0$. Then $\Psi$ in satisfies the overdetermined problem 
\begin{equation}\label{overdet_sum_1}
\left\{
\begin{array}{ll}
\lambda \Psi_{xx}+\Psi_{xxxx}=\sigma \Psi, & x\in (0,L),\\[10pt]
 \Psi_x(0)= \Psi_{xxx}(0)=0 , &\\[10pt]
 \Psi(L)=\Phi_x(L)=\Psi_{xx}(L)=0 &\\[10pt]
\end{array}
\right.
\end{equation}
 Next we distinguish several cases in terms of the sign of $\sigma$.

\noindent{Case $\sigma >0$.} As in section \ref{prelim1} the boundary conditions at $x=0$ lead to
$$\Psi(x)=C_1\cosh (\alpha x)+C_3 \cos (\beta x).$$
Imposing the conditions at $x=L$ in \eqref{overdet_sum_1} we get
\be\label{matrix1}
\mathfrak{M} \left(\begin{array}{c}
    C_1 \\
    C_3
  \end{array}\right)=\left(\begin{array}{c}
                       0 \\
                       0 \\
                       0
                     \end{array}\right)
\ee
where $$ \mathfrak{M} =\left(\begin{array}{cc}
            \cosh(\alpha L) & \cos(\beta L) \\
            \alpha\sinh(\alpha L) & -\beta \sin(\beta L) \\
            \alpha^2 \cosh(\alpha L) & -\beta^2 \cos(\beta L)
          \end{array}\right).
$$
Observe that $\textrm{rank } \mathfrak{M} =2$ since $\cos(\beta L)$ and $\sin (\beta L)$ cannot vanish simultaneously.  Therefore $\Psi\equiv 0$, which is a contradiction with the fact that $\Psi$ is an eigenfunction. 

\noindent{Case $\sigma=0$}. From section \ref{prelim1} and the conditions at $x=0$ we necessary have
$$\Psi(x)=C_1+C_3 \cos (\sqrt{\lambda }x).$$
From the conditions at $x=L$ we get
\be\label{matrix1}
\mathfrak{P} \left(\begin{array}{c}
    C_1 \\
    C_3
  \end{array}\right)=\left(\begin{array}{c}
                       0 \\
                       0 \\
                       0
                     \end{array}\right)
\ee
where $$\mathfrak{P}=\left(\begin{array}{cc}
            1 & \cos(\sqrt{\lambda} L) \\
            0 & -\sqrt{\lambda} \sin(\sqrt{\lambda} L) \\
            0 & -\lambda \cos(\sqrt{\lambda} L)
          \end{array}\right).
$$
Again, $\textrm{rank } \mathfrak{P}=2 $ since $\cos(\sqrt{\lambda}L)$ and $\sin(\sqrt{\lambda}L)$ cannot vanish simultaneously.  This implies $C_1=C_3=0$ and therefore $\Psi\equiv 0$ which is a contradiction. 

\noindent{Case $\sigma<0$.} From section \ref{prelim1} and \eqref{overdet_sum_1} we necessary have
$$\Psi(x)=C_1 \cos (\gamma x)+ C_3 \cos(\beta x).$$
 Then from the conditions at $x=L$ in \eqref{overdet_dif_1} we obtain
 \be\label{matrix1}
\mathfrak{R} \left(\begin{array}{c}
    C_1 \\
    C_3
  \end{array}\right)=\left(\begin{array}{c}
                       0 \\
                       0 \\
                       0
                     \end{array}\right)
\ee
where $$\mathfrak{R}=\left(\begin{array}{cc}
            \cos(\gamma L) & \cos(\beta L) \\
            -\gamma \sin(\gamma L) & -\beta\sin(\beta L) \\
            -\gamma^2 \cos(\gamma L) & -\beta^2 \cos(\beta L)
          \end{array}\right).
$$
It is easy to see that $\textrm{rank } \mathfrak{R} =1$ if and only if $\cos(\gamma L)=\cos(\beta L)=0$ which is equivalent to $\lambda\in \mathcal{N}_{odd}$. In other words, $\textrm{ rank } \mathfrak{R} =2$ if and only if $\lambda \not \in \mathcal{N}_{odd}$ in which case we get $\Psi\equiv 0$. Contradiction.

Therefore $\Psi_{xx}(L)\neq 0$ in all the situations stated in Proposition \ref{max_prop1}. 
  If $\sigma<0$ and $\lambda\not \in \mathcal{N}_{odd}$ then we obtain $\cos(\gamma L)=\cos(\beta L)=0$ which implies from above that $\Psi_{xx}(L)=0$. 

Thus the proof of \eqref{m1} is finished. 

\textit{The case  $(\sigma,\Phi)$ is eigenpair of  \eqref{eign2}}. We proceed as in the previous case: for the proof of \eqref{m2} we first assume that $\Phi_{xx}(L)=0$ and then $\Phi$
 verifies
\begin{equation}\label{overdet_dif_1}
\left\{
\begin{array}{ll}
\lambda \Phi_{xx}+D_{xxxx}=\sigma \Phi, & x\in (0,L),\\[10pt]
\Phi(0)=\Phi_{xx}(0)=0 , &\\[10pt]
\Phi(L)=\Phi_{x}(L)=\Phi_{xx}(L)=0, &\\[10pt]
\end{array}
\right.
\end{equation}
Case $\sigma>0$.  From section \ref{prelim2} imposing the boundary conditions at the origin we have
$$\Phi(x)=C_2 \sinh(\alpha x)+C_4 \sin (\beta x).$$
The conditions at $x=L$ in \eqref{overdet_dif_1} give
\be\label{matrix1}
\mathfrak{N}\left(\begin{array}{c}
	C_2 \\
	C_4
\end{array}\right)=\left(\begin{array}{c}
	0 \\
	0 \\
	0
\end{array}\right)
\ee
where $$\mathfrak{N}=\left(\begin{array}{cc}
\sinh(\alpha L) & \sin(\beta L) \\
\alpha\cosh(\alpha L) & \beta \cos(\beta L) \\
\alpha^2 \sinh(\alpha L) & -\beta^2 \sin(\beta L)
\end{array}\right).
$$
Since $\textrm{rank } \mathfrak{N} =2$ we obtain $D\equiv 0$ which is in contradiction with the fact that $\phi$ is an eigenfunction.

{Case $\sigma=0$}.    Going back to section \ref{prelim2} from  the first conditions in \eqref{overdet_dif_1} we get   that
$$D(x)= C_2 x +C_4 \sin (\sqrt{\lambda }x ).$$
Applying the conditions at $x=L$ in \eqref{overdet_dif_1} we obtain
\be\label{matrix1}
\mathfrak{Q} \left(\begin{array}{c}
	C_2 \\
	C_4
\end{array}\right)=\left(\begin{array}{c}
	0 \\
	0 \\
	0
\end{array}\right)
\ee
where $$\mathfrak{Q}=\left(\begin{array}{cc}
L & \sin(\sqrt{\lambda} L) \\
1 & \sqrt{\lambda} \cos(\sqrt{\lambda} L) \\
0 & -\lambda \sin(\sqrt{\lambda} L)
\end{array}\right).
$$
Since $\textrm{rank } \mathfrak{Q}=2$ we obtain $\Phi\equiv 0$. Contradiction.

{Case $\sigma<0$}.  Again, in view of section \ref{prelim2} we  have
$$\Phi(x)=C_2 \sin (\gamma x)+C_4 \sin (\beta x).$$
Imposing the conditions at $x=L$ in \eqref{overdet_dif_1} we obtain
\be\label{matrix1}
\mathfrak{S} \left(\begin{array}{c}
	C_2 \\
	C_4
\end{array}\right)=\left(\begin{array}{c}
	0 \\
	0 \\
	0
\end{array}\right)
\ee
where $$\mathfrak{S}=\left(\begin{array}{cc}
\sin(\gamma L) & \sin(\beta L) \\
\gamma \cos(\gamma L) & \beta\cos(\beta L) \\
-\gamma^2 \sin(\gamma L) & -\beta^2 \sin(\beta L)
\end{array}\right).
$$
We easily deduce that $\textrm{rank } \mathfrak{S}=1$ if and only if $\sin (\gamma L)=\sin(\beta L)=0$ which is equivalent to $\lambda \in \mathcal{N}_1$. Therefore, $\textrm{rank } \mathfrak{S}=2$ if and only if $\lambda \not \in \mathcal{N}_1$ in which case we get $\Phi\equiv 0$. Contradiction.

Therefore $\Phi_{xx}(L)\neq 0$ in all the situations stated in Proposition \ref{max_prop1}. 
If $\sigma<0$ and $\lambda\not \in \mathcal{N}_{1}$ then we obtain $\sin(\gamma L)=\sin(\beta L)=0$ which implies from above that $\Phi_{xx}(L)=0$. 

This ensures the proof of \eqref{m2}  and thus, the proof of Proposition \ref{max_prop1} is finished.
\end{proof}

% 
%We avoid the details of the proof of Proposition \ref{max_prop2} since it follows the same steps and ideas as the proof of Proposition \ref{max_prop1}. We let the rest of details to the reader.

\subsection{New control results for the linear KS on an interval}\label{secf2}

In this section we present some new control results for a single linear KS equation  which are direct consequences of the spectral analysis developed in Section \ref{secII}. For the sake of clarity we will not insist too much on the rigorousness of the technical details since we already did it in the previous sections.

We consider the system
\be\label{singleKS}
\left\{\begin{array}{ll}
 y_t +\la y_{xx}+y_{xxxx}=0, & (t, x) \in (0, T)\times (0,L)\\[10pt]
y_{x}(t, 0)=u^1(t), & t\in (0, T) \\[10pt]
y_{xxx}(t, 0)=u^2(t),  & t\in (0, T) \\[10pt]
y_x(t, L)=y_{xxx}(t, L)=0, & t\in (0, T)\\[10pt]
y(0, x)=y_0(x), & x\in (0, L).\\[10pt]
\end{array}\right.
\ee
where $u^1, u^2$ are control inputs. We then obtain

\begin{theorem}\label{theo1.1}
The system \eqref{singleKS} is null-controllable for any  $\lambda >0$: for any time $T>0$  and any initial data $y_0\in L^2(0, L)$  there exist two controls $u^1, u^2\in H^1(0, T)$ which steer the solution of \eqref{singleKS} to the zero state, i.e. $y(T, x)=0$, for all $x\in (0, L)$.
\end{theorem}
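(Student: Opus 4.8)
The plan is to adapt the duality/moment method of Theorems \ref{main theorem 1} and \ref{main theorem 2} to this single equation. First I would introduce the adjoint problem
\bes
\left\{\begin{array}{ll}
-q_t+\la q_{xx}+q_{xxxx}=0, & (t,x)\in(0,T)\times(0,L),\\[4pt]
q_x(t,0)=q_{xxx}(t,0)=0,\quad q_x(t,L)=q_{xxx}(t,L)=0, & t\in(0,T),\\[4pt]
q(T,\cdot)=q_T, &
\end{array}\right.
\ees
whose boundary conditions are fixed, exactly as in Lemma \ref{Lem_equiv}, so that after multiplying the equation in \eqref{singleKS} by $q$ and integrating by parts every boundary contribution cancels except the ones carrying the controls. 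This gives the duality identity
\bes
(y_0,q(0,\cdot))_{L^2(0,L)}=\intt \big[u^1(t)\big(\la q(t,0)+q_{xx}(t,0)\big)+u^2(t)\,q(t,0)\big]\dt ,
\ees
valid for all $q_T$, so \eqref{singleKS} is null-controllable in time $T$ iff for every $y_0\in L^2(0,L)$ one can find $u^1,u^2\in H^1(0,T)$ realising it for all $q_T$.

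I would then invoke the spectral description of Section \ref{prelim1}: the stationary operator governing $q$ is precisely the one of problem \eqref{eigenval}, which is self-adjoint with compact inverse (as in Propositions \ref{p1} and \ref{p12}), has spectrum $\{\sigma_n\}$ bounded below by $-\la^2/4$ with $\sigma_n\to+\infty$, and an $L^2(0,L)$-orthonormal eigenbasis. Section \ref{prelim1} gives these data explicitly: the positive eigenvalues $(\tfrac{n\pi}{L})^4-\la(\tfrac{n\pi}{L})^2$ are simple with eigenfunctions proportional to $\cos(\tfrac{n\pi x}{L})$; there are finitely many non-positive eigenvalues; and the only possibly multiple ones are $\sigma=0$ (double iff $\la\in\mathcal{N}_2$) and one negative value (double iff $\la\in\mathcal{N}_1$). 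Expanding $q_T$, hence $q$, in this basis and inserting $q(t,0)=\sum_n e^{(t-T)\sigma_n}q_n\phi_n(0)$ and $\la q(t,0)+q_{xx}(t,0)=\sum_n e^{(t-T)\sigma_n}q_n(\la\phi_n(0)+\phi_{n,xx}(0))$ into the duality identity reduces the problem, as in Lemma \ref{momentpb}, to the moment problem: for each eigenvalue $\sigma_n$ and each element $\phi_{n,l}$ of a basis of its eigenspace,
\bes
\big(\la\phi_{n,l}(0)+\phi_{n,l,xx}(0)\big)\intt u^1(T-t)e^{-t\sigma_n}\dt+\phi_{n,l}(0)\intt u^2(T-t)e^{-t\sigma_n}\dt=y_{0,n,l}\,e^{-T\sigma_n}.
\ees

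The decisive step — and the reason the statement holds for \emph{every} $\la>0$, in contrast with the restriction $\la\notin\mathcal{N}_3$ in Theorem \ref{maxcontrol_II} — is that the $2\times2$ observability data attached to each eigenvalue is non-degenerate. Since the eigenfunctions are cosine-type one always has $\phi_{n,l}(0)\neq0$, so for a simple $\sigma_n$ the control $u^2$ alone already suffices for that mode, and $\la\phi_n(0)+\phi_{n,xx}(0)\sim c\,n^2$ for the large positive eigenvalues. When $\sigma=0$ is double ($\la\in\mathcal{N}_2$) the eigenfunctions are $1$ and $\cos(\sqrt\la x)$, and $(C_1,C_3)\mapsto(\la\phi(0)+\phi_{xx}(0),\phi(0))=(\la C_1,\,C_1+C_3)$ has determinant $\la\neq0$; when the negative eigenvalue is double ($\la\in\mathcal{N}_1$, with $\la=(\tfrac{n\pi}{L})^2+(\tfrac{m\pi}{L})^2$, $n\neq m$) the analogous map on $C_1\cos(\tfrac{n\pi x}{L})+C_3\cos(\tfrac{m\pi x}{L})$ is $(C_1,C_3)\mapsto(C_1(\tfrac{m\pi}{L})^2+C_3(\tfrac{n\pi}{L})^2,\,C_1+C_3)$, with determinant $(\tfrac{n\pi}{L})^2-(\tfrac{m\pi}{L})^2\neq0$. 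Hence in all cases the linear system can be inverted, prescribing $\intt u^1(T-t)e^{-t\sigma_n}\dt$ and $\intt u^2(T-t)e^{-t\sigma_n}\dt$ as constants $c_n,d_n$ of size $O(|y_{0,n,l}|e^{-T\sigma_n}/n^2)$. Solving the two resulting scalar moment problems in $H^1(0,T)$ then follows from \cite[(3.9)]{MR0335014}: since $\sigma_n\sim(\tfrac{n\pi}{L})^4$ the spectral gap is ample and the weight $e^{-T\sigma_n}$ dominates all polynomial growth in the series $\sum_n|c_n|\sigma_n^{1/2}\prod_{j\neq n}\big|\tfrac{\sigma_n+\sigma_j}{\sigma_n-\sigma_j}\big|$, exactly as in Sections \ref{secI}--\ref{secII}, yielding controls even of class $C^\infty(0,T)$. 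I expect the only point needing care to be this bookkeeping for the multiple eigenvalues — precisely where having two controls becomes essential — the rest being a routine transcription of the arguments already carried out.
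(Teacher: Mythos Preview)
Your proposal is correct and follows essentially the same route as the paper: both introduce the same adjoint problem, derive the same duality identity, identify the relevant spectral problem with \eqref{eigenval}, and reduce everything to a moment problem solved via \cite{MR0335014}. The only cosmetic difference is that where you compute the $2\times2$ determinants for the possibly double eigenvalues explicitly, the paper simply invokes Lemma~\ref{lem1} at $x=0$ (noting that ``$\phi(0)$ and $\lambda\phi(0)+\phi_{xx}(0)$ cannot vanish simultaneously'' over a two-dimensional eigenspace is exactly the invertibility of your map); your explicit check is a faithful unpacking of that lemma.
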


Roughly speaking, the null-controllability of \eqref{singleKS} is given through the adjoint problem (backward in time)

\be\label{adjoint_singleKS}
\left\{\begin{array}{ll}
 -q_t +\la q_{xx}+q_{xxxx}=0, & (t, x) \in (0, T)\times (0,L)\\[10pt]
q_{x}(t, 0)=q_{xxx}(t, 0)=0,  & t\in (0, T) \\[10pt]
q_x(t, L)=q_{xxx}(t, L)=0, & t\in (0, T)\\[10pt]
q(T, x)=q_T(x), & x\in (0, L).\\[10pt]
\end{array}\right.
\ee

As in subsection \ref{contPb} the system \eqref{singleKS} is null-controllable if one ensures the existence of  the control inputs $u^1, u^2$ such that
\be\label{sing1}
\int_0^L y_0(x)q(0, x) dx = \int_0^T u^1(t) (\lambda q(t, 0) + q_{xx}(t, 0))+\int_0^T u^2(t)q(t, 0) dt.
\ee

Note that the  spectral problem corresponding to the adjoint system \eqref{adjoint_singleKS} is precisely the eigenvalue problem \eqref{eigenval} in subsection \ref{prelim1}.

Therefore, in view of the method of moments such controls $u^1, u^1$ satisfying \eqref{sing1} can be build if  any eigenfunction $\phi$ of system \eqref{eigenval}  verifies  $\phi(0)$ and $\lambda \phi(0) + \phi_{xx}(0)$  cannot vanish simultaneously. This is true as a consequence of Lemma \ref{lem1} applied at the point $x=0$ instead of $x=L$ (we let the details to the reader to check that Lemma \ref{lem1} is also valid when replacing $L$ with 0).

\medskip
Another direct application of our spectral results in Section \ref{secII} regards the following system

\be\label{singleKS2}
\left\{\begin{array}{ll}
 y_t +\la y_{xx}+y_{xxxx}=0, & (t, x) \in (0, T)\times (0,L)\\[10pt]
y(t, 0)=u^1(t), & t\in (0, T) \\[10pt]
y_{xx}(t, 0)=u^2(t),  & t\in (0, T) \\[10pt]
y_x(t, L)=y_{xxx}(t, L)=0, & t\in (0, T)\\[10pt]
y(0, x)=y_0(x), & x\in (0, L).\\[10pt]
\end{array}\right.
\ee
whose adjoint is given by

\be\label{adjoint_singleKS2}
\left\{\begin{array}{ll}
 -q_t +\la q_{xx}+q_{xxxx}=0, & (t, x) \in (0, T)\times (0,L)\\[10pt]
q(t, 0)=q_{xx}(t, 0)=0,  & t\in (0, T) \\[10pt]
q_x(t, L)=q_{xxx}(t, L)=0, & t\in (0, T)\\[10pt]
q(T, x)=q_T(x), & x\in (0, L).\\[10pt]
\end{array}\right.
\ee
We easily observe that the spectral problem corresponding to system \eqref{adjoint_singleKS2} is nothing else than the eigenvalue problem \eqref{eigenval2} in subsection \ref{prelim2}.
By the above considerations system \eqref{singleKS2} is null-controllable if one can find $u^1, u^2$ such that
\be\label{sing2}
\int_0^L y_0(x)q(0, x) dx + \int_0^T u^1(t) (\lambda q_x(t, 0) + q_{xxx}(t, 0))+\int_0^T u^2(t)q_x(t, 0) dt=0.
\ee
This is equivalent to verify if each eigenfunction $\phi$ of \eqref{eigenval2} satisfies that $\phi_x(0)$ and $\lambda \phi_x(0) + \phi_{xxx}(0)$ do not vanish simultaneously. Indeed, as a consequence of the complete determination of the eigenfunctions of \eqref{eigenval2} in subsection \ref{prelim2}, this is true for any $\lambda>0$. Therefore, we obtain the following controllability result. 
\begin{theorem}\label{theo2}
The system \eqref{singleKS2} is null-controllable for any  $\lambda >0$: for any time $T>0$  and any initial data $y_0\in L^2(0, L)$  there exist two controls $u^1, u^2\in H^1(0, L)$ which steer the solution of \eqref{singleKS2} to the zero state, i.e. $y(T, x)=0$, for all $x\in (0, L)$.
\end{theorem}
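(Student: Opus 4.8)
The plan is to run, in this scalar setting, the same duality-plus-moments argument already used for Theorems \ref{main theorem 1} and \ref{main theorem 2}, the only genuinely new ingredient being a rank check for the boundary functionals of the eigenfunctions of \eqref{eigenval2}. First I would justify the identity \eqref{sing2}: multiplying the equation in \eqref{singleKS2} by the solution $q$ of the adjoint problem \eqref{adjoint_singleKS2} and integrating by parts on $(0,T)\times(0,L)$, all boundary contributions at $x=L$ cancel because $y_x(t,L)=y_{xxx}(t,L)=q_x(t,L)=q_{xxx}(t,L)=0$, while at $x=0$ the conditions $q(t,0)=q_{xx}(t,0)=0$ together with $y(t,0)=u^1(t)$, $y_{xx}(t,0)=u^2(t)$ leave exactly the terms $\intt u^1(t)(\la q_x(t,0)+q_{xxx}(t,0))\dt$ and $\intt u^2(t)q_x(t,0)\dt$. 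Hence $y(T,\cdot)\equiv 0$ for every $y_0\in L^2(0,L)$ if and only if \eqref{sing2} holds for every $q_T\in L^2(0,L)$.

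Next I would expand $q_T$ and $y_0$ in the $L^2(0,L)$-orthonormal eigenbasis $\{\phi_{n,l}\}$ of the operator attached to \eqref{eigenval2}, which (as in Section \ref{prelim2}) has a discrete spectrum $\{\sigma_n\}$ bounded below by $-\la^2/4$ with $\sigma_n\to\infty$; solving the scalar adjoint ODEs gives $q(t,x)=\sum_{n,l}e^{(t-T)\sigma_n}q_{n,l}\phi_{n,l}(x)$. Substituting into \eqref{sing2} turns null-controllability into the moment problem of finding $u^1,u^2\in H^1(0,T)$ with
\[
y_{0,n,l}\,e^{-T\sigma_n}=\bigl(\la\phi_{n,l,x}(0)+\phi_{n,l,xxx}(0)\bigr)\intt u^1(T-t)e^{-t\sigma_n}\dt+\phi_{n,l,x}(0)\intt u^2(T-t)e^{-t\sigma_n}\dt
\]
for all $n$ and all $l=1,\dots,m(\sigma_n)$ (signs are immaterial, up to replacing $u^i$ by $-u^i$).

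The key point, playing the role of Lemma \ref{neqzero2}, is that for every $\sigma_n$ the matrix whose rows are $\bigl(\la\phi_{n,l,x}(0)+\phi_{n,l,xxx}(0),\ \phi_{n,l,x}(0)\bigr)$, $l=1,\dots,m(\sigma_n)$, has full rank $m(\sigma_n)\le 2$. From the explicit description in Section \ref{prelim2}, every simple eigenfunction is $C\sin(\beta x)$ with $\beta>0$ (this also covers $\sigma=0$, with $\beta=\sqrt\la$, when $\la\in\mathcal{N}_3$), so $\phi_{n,l,x}(0)=C\beta\ne 0$; and the only multiple eigenvalues are the finitely many $\sigma<0$ occurring when $\la\in\mathcal{N}_{odd}$, for which a basis is $\Phi=\sin(\beta_n x)$, $\tilde\Phi=\sin(\beta_m x)$ with $\la=\beta_n^2+\beta_m^2$, $n\ne m$, so that
\[
\begin{vmatrix}\la\Phi_x(0)+\Phi_{xxx}(0)&\Phi_x(0)\\[3pt]\la\tilde\Phi_x(0)+\tilde\Phi_{xxx}(0)&\tilde\Phi_x(0)\end{vmatrix}=\begin{vmatrix}\beta_n\beta_m^2&\beta_n\\[3pt]\beta_m\beta_n^2&\beta_m\end{vmatrix}=\beta_n\beta_m(\beta_m^2-\beta_n^2)\ne 0 .
\]
(When $\la\notin\mathcal{N}_3$ the value $\sigma=0$ is not an eigenvalue, so no degenerate case occurs.) Inverting this matrix reduces the system above to two scalar moment problems $\intt u^i(T-t)e^{-t\sigma_n}\dt=c_{n,i}$, where $c_{n,i}$ is bounded by $|y_{0,n,l}|\,e^{-T\sigma_n}$ times harmless constants, using that the $L^2$-normalized eigenfunctions satisfy $|\phi_{n,l,x}(0)|\asymp n$ and $|\la\phi_{n,l,x}(0)+\phi_{n,l,xxx}(0)|\asymp n^3$.

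Finally I would invoke Fattorini--Russell \cite{MR0335014}: since $\sigma_n=(\tfrac{\pi}{2L})^4 n^4+o(n^4)$ has a uniform spectral gap and the coefficients $c_{n,i}$ decay fast enough for the series $\sum_n c_{n,i}\sigma_n^{1/2}\prod_{j\ne n}\frac{\sigma_n+\sigma_j}{\sigma_n-\sigma_j}$ to converge absolutely — exactly as in the proofs of Theorems \ref{main theorem 1}--\ref{main theorem 2} — both scalar moment problems are solvable in $H^1(0,T)$ (indeed in $H^s(0,T)$ for every $s\ge 0$), which produces the desired controls for any $\la>0$. I expect the only slightly delicate step to be the full-rank verification for the double negative eigenvalues at $\la\in\mathcal{N}_{odd}$; everything else is a verbatim transcription of the machinery already developed, the eigenfunctions here being pure sines with completely elementary boundary values.
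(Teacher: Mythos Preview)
Your proof is correct and follows essentially the same duality--moments--spectral route as the paper's sketch preceding Theorem~\ref{theo2}. Your explicit rank-$2$ determinant check for the double negative eigenvalues when $\la\in\mathcal{N}_{odd}$ is in fact more thorough than the paper's one-line claim that $\phi_x(0)$ and $\la\phi_x(0)+\phi_{xxx}(0)$ ``do not vanish simultaneously,'' which as stated only addresses the simple-eigenvalue case.
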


\medskip

\subsection*{Acknowledgments} C. C. was partially supported by  a Young Researchers Grant awarded by The Research Institute of the University of Bucharest (ICUB) and also by the CNCS-UEFISCDI Grant No. PN-III-P4-ID-PCE-2016-0035.  L. I. was  partially supported by  a grant of the Romanian National Authority for Scientific Research and Innovation, CNCS--UEFISCDI, project number PN-II-RU-TE-2014-4-0007.
 A. P. was partially supported by CNPq (Brasil) and Agence Universitaire de la Francophonie. This work started when A. P.  visited the ``Simion Stoilow" Institute of Mathematics of the Romanian Academy in Bucharest and continued with the visit of C. C.  at Instituto de Matem\'{a}tica from  Universidade Federal do Rio de Janeiro in Brasil.  The authors thank both institutions for their hospitality and the possibility to work in a good research atmosphere. Finally, we wish to express our gratitude to the referees for their remarks and suggestions which helped us to  consistently improve the initial submission of the paper.

\bibliographystyle{amsplain}
\bibliography{bib_cip_7}
\end{document}